\documentclass[12pt,a4paper]{report}
\usepackage[T2A]{fontenc}

\makeindex

\usepackage{inputenc}
\usepackage[russian]{babel}
\usepackage{amsmath}
\usepackage{amssymb}
\usepackage{amsthm}
\usepackage{latexsym}
\usepackage{cite} 
\usepackage{comment}        		



\renewcommand{\phi}{\varphi}
\renewcommand{\epsilon}{\varepsilon}
\renewcommand{\kappa}{\varkappa}       
\renewcommand{\le}{\leqslant}           
\renewcommand{\ge}{\geqslant}           


\def\PI{{\rm PI}}
\def \GK{\operatorname{GK}}
\def\Pid{\operatorname{Pid}}
\def\Var{\operatorname{Var}}
\def\Ht{\operatorname{Ht}}
\def\HtEss{\operatorname{HtEss}}
\def\AC{\operatorname{AC}}

\def\Mat{\operatorname{Mat}}





\newcommand{\cM}{{\cal M}}

\newcommand{\gA}{{\mathfrak A}}
\newcommand{\gB}{{\mathfrak B}}


\newcommand{\FF}{\mathbb{F}}

\newcommand{\ZZ}{\mathbb{Z}}



\newtheorem{theorem}{Теорема}[section]
\newtheorem{corollary}{Следствие}[section]
\newtheorem{lemma}{Лемма}[section]
\newtheorem{proposition}{Предложение}[section]
\newtheorem{notation}{Обозначение}[section]
\newtheorem{remark}{Замечание}[section]
\newtheorem{construction}{Конструкция}[section]
\newtheorem{algorithm}{Алгоритм}[section]
\newtheorem{ques}{Вопрос}[section]
\newtheorem{question}{Вопрос}[section]
\newtheorem{problem}{Проблема}[section]

\theoremstyle{definition}
\newtheorem{definition}{Определение}[section]

\newtheorem{example}{\rm\bf Пример}[section]



 \tolerance 8000

\oddsidemargin=0.0mm  \textwidth=170mm \topmargin=-21.4mm \textheight=250mm

\begin{document}

\renewcommand{\baselinestretch}{1.2} 

{\large

{\large
\begin{center}
МОСКОВСКИЙ ГОСУДАРСТВЕННЫЙ УНИВЕРСИТЕТ\\
имени М.~В.~ЛОМОНОСОВА\\
\vskip 4mm
Механико-математический факультет\\
\end{center}

\vskip 9mm \rightline{На правах рукописи}

\vskip 11mm
\begin{center}
Харитонов Михаил Игоревич
\end{center}

\rightline{УДК 512.5+512.64+519.1}

\vskip 6mm

\begin{center}
{ \large Оценки, связанные с теоремой Ширшова о высоте}

\end{center}

\bigskip

\begin{center}
01.01.06~---~математическая логика, алгебра и теория чисел
\end{center}

\vskip 7mm
\begin{center}
Диссертация на соискание ученой степени\\
кандидата физико-математических наук
\end{center}

\vskip 10mm

\begin{flushright}
Научные руководители:\\
доктор физико-математических наук,\\
профессор А.\,В.~Михал\"{е}в\\
доктор физико-математических наук,\\
профессор А.\,Я.~Белов\\

\end{flushright}

\vfill
\begin{center}
Москва~---~2015
\end{center}
} \thispagestyle{empty}
\newpage

\tableofcontents
\chapter{Введение} 


\markboth{}{Введение}



\section{Краткое содержание}

Рассмотрим $l$-порожденную ассоциативную алгебру $A$, в которой выполняется тождество $x^n = 0$ для некоторого натурального $n$ (так называемое ниль-тождество). Ее {\em индексом нильпотентности}\index{Индекс!нильпотентности} называют наибольшее количество образующих, произведение которых может не быть равно нулю. В диссертации получена верхняя оценка индекса нильпотентности (см. теорему \ref{th_nil} и главу \ref{ch:nil}).

Рассмотрим теперь $l$-порожденную ассоциативную алгебру $B$, в которой выполняется полиномиальное тождество степени $n$. Пусть $Y = \{v_1,\dots,v_t\}$ --- некоторое множество слов от образующих алгебры $B$. Пусть $S(Y) = \bigcup\limits_{i=1}^t\bigcup\limits_{j=1}^\infty v_i^j$. {\em Высотой $\Ht_Y(B)$}\index{Высота!алгебры} алгебры $B$ над множеством $Y$ называется наименьшее число такое, что любой элемент $b\in B$ раскладывается в линейную комбинацию слов из $S(Y)^{\Ht_Y(B)}$. Таким образом, понятие высоты является естественным обобщением понятия индекса нильпотентности.

Пусть $Y_k(X)$ --- множество слов над образующими некоторой алгебры $X$ длины меньше $k$. А.~И.~Ширшовым было доказано, что  высота $\Ht_{Y_n}(B)$ конечна. Заметим, что если $X$ --- некоторая конечно-порожденная коммутативная алгебра, то $\Ht_{Y_2(X)}(X)$ не больше числа порождающих алгебры. В диссертации получена верхняя оценка высоты  $\Ht_{Y_n(B)}(B)$ (см. теорему \ref{c:main2} и главу \ref{ch:height}).

Рассмотрим множество $B'$ слов над $l$-буквенным алфавитом. Назовем слово не $k$-разбиваемым\index{Слово!$n$-разбиваемое}, если в нем не найдется $k$ непересекающихся подслов в порядке строгого лексикографического убывания. А.~И.~Ширшов показал, что  $\Ht_{Y_n}(B)$ не больше числа не $n$ разбиваемых слов в множестве $B'$\index{Теорема!Ширшова о высоте}. Таким образом, алгебраическая задача переводится на комбинаторный язык. Результаты в статье получены с помощью разработки комбинаторного аппарата, основанного на этом комбинаторном языке. Автором разработаны комбинаторные методы для улучшения оценок в теореме \ref{c:main2} (см. теоремы \ref{verh}, \ref{niz}, \ref{verh1}, \ref{verh2} и главу \ref{ch:pp}).

Концепция $k$-разбиваемости не ограничивается применением в теореме о высоте. Подробнее эта концепция, а также связанные с ней результаты автора описаны в пункте \ref{sec:div} и главе \ref{ch:pu_number}.

\section{Проблемы бернсайдовского типа}

Проблемы бернсайдовского типа оказали огромное влияние на алгебру XX века. Центральное место имела проблема, поставленная Бернсайдом ({[\citen{Bur1902}]}) в 1902~г. для групп.

\begin{problem}[{[\citen{Bur1902}]}]\index{Проблема!Бернсайда!для групп}
Будет ли конечной всякая периодическая конечно-порожд\"{е}нная группа?
\end{problem}

\begin{definition}\index{Группа конечно-порожд\"{е}нная}
{\em Свободная $m-$порожд\"{е}нная бернсайдова группа $B(m,n )$ периода $n$} по определению есть группа с заданием 
$$\langle a_1, a_2,\dots, a_m|X^n=1\text{ для всех слов }X\rangle.$$

\end{definition}

\begin{proposition}
Если $B(m,n)$ бесконечна, то бесконечны и группы $B(m,kn)$ и
$B(m',n)$ при $m'\le m$.
\end{proposition}

Первоначальные усилия были направлены в сторону положительного решения проблемы, так как все известные частные случаи давали позитивный ответ. Например, если группа порождена $m$ элементами и порядок каждого е\"{е} элемента является делителем числа $4$ или $6$, она конечна.

В связи с этим была поставлена так называемая ``ослабленная'' проблема Бернсайда (также известная как проблема Бернсайда--Магнуса~\cite{Ad10}):

\begin{problem}
Верно ли, что среди всех $m-$порожденных конечных групп с тождеством $x^n=1$ есть максимальная?
\end{problem}

При простом $n$ эта проблема была решена А.~И.~Кострикиным \cite{Kos86}. Он свел задачу к локальной конечности алгебр Ли над полем $\ZZ_p$ с тождеством $$[\dots[x,y],y],\dots, y] = 0.$$ В общем случае это составило знаменитый результат Е.~И.~Зельманова \cite{Zel90, Zel91}, который установил локальную конечность алгебраических $\PI$-алгебр Ли над полем произвольной характеристики.

Первый контрпример к
``неограниченной'' проблеме был получен благодаря универсальной конструкции Голода--Шафаревича. Это вытекало из конструкции бесконечномерного ниль-кольца (разумеется, индекс нильпотентности этого кольца неограничен). Хотя ``ослабленная'' проблема Бернсайда была решена положительным образом, вопрос о
локальной конечности групп с тождеством $x^n=1$ был решен отрицательно
в знаменитых работах П.~С.~Новикова и С.~И.~Адяна \cite{NA68_1, NA68_2, NA68_3, NA68_4}: было доказано существование
для любого нечетного $n\ge 4381$ бесконечной группы с $m>1$ образующими, удовлетворяющей
тождеству $x^n=1$. Эта оценка была улучшена до $n\geqslant 665$ С.~И.~Адяном \cite{Ad75}\footnote{Недавно С.~И.~Адян улучшил эту оценку до $n\geqslant 101$. Наилучшие оценки в проблемах бернсайдовского типа для групп были получены представителями школы С.~И.~Адяна.}.
Позднее А.~Ю.~Ольшанский \cite{Ol82} предложил геометрически наглядный вариант доказательства
для нечетных $n>10^{10}$.

Ч\"етный случай оказался значительно сложнее неч\"етного. Результат о бесконечности групп $B(m, n)$ для ч\"етных значений периода
$n$ был объявлен независимо С.~В.~Ивановым \cite{Iv92} для $n\ge 2^{48}$ и
И.~Г.~Лыс\"{е}нком~\cite{Lis96} для $n\ge 2^{13}$. Подробное доказательство результата
С.~В.~Иванова опубликовано в статье \cite{Iv94}, в которой фактически разработан
вариант теории, применимый к бернсайдовым группам периода $n\ge
2^{48}$, делящегося на $2^9$.

Аналог проблемы Бернсайда для ассоциативных алгебр был сформулирован А.~Г.~Курошем в тридцатых годах двадцатого века:

\begin{question} \index{Проблема!Бернсайда!для ассоциативных алгебр}
Пусть все $1$-порожд\"{е}нные подалгебры конечно-порожд\"{е}нной ассоциативной алгебры $A$ конечномерны. Будет ли алгебра $A$ конечномерна?
\end{question}
Отрицательный ответ на вопрос А.~Г.~Куроша был получен Е.~С.~Голодом в 1964 году.
\begin{definition} \index{Класс нильпотентности} \index{Ниль-!индекс}
{\em Классом нильпотентности} или {\em ниль-индексом} ассоциативной алгебры $A$ называется минимальное натуральное число $n$ такое, что $A^n=0$.
\end{definition}

\begin{theorem}[А.~Г.~Курош, \cite{Kur41}]
Любая удовлетворяющая тождеству $x^2=0$ алгебра над полем характеристики $\geqslant 3$ или $0$ является нильпотентной класса $3$. Любая нильпотентная конечно-порожд\"{е}нная алгебра конечномерна.
\end{theorem}

\begin{definition}\index{Индекс!алгебры}
{\em Индексом} алгебраической алгебры $A$ называется супремум степеней минимальных аннулирующих многочленов элементов $A$.
\end{definition}

В 1941 году А.~Г.~Курош в работе \cite{Kur41} сформулировал аналог проблемы Бернсайда для алгебр конечного индекса:

\begin{question} \index{Проблема!Куроша}
\begin{enumerate}
	\item Верно ли, что конечно-порожд\"{е}нная ниль-алгебра конечного ниль-индекса нильпотентна?
	\item Верно ли, что конечно-порожд\"{е}нная алгебра конечного индекса конечномерна?
\end{enumerate}
\end{question}

В 1948 году И.~Капланский ответил на вопрос А.~Г.~Куроша, отказавшись от условия конечности индекса:

\begin{theorem}[И.~Капланский, \cite{Kap48}]\label{intro:kap}
Любая конечно-порожд\"{е}нная алгебраическая алгебра над коммутативным кольцом, удовлетворяющая допустимому полиномиальному тождеству, конечномерна.
\end{theorem}

\section{Теорема Ширшова о высоте, следствия и обобщения}

В 1958 году А.~И.~Ширшов улучшил результат И.~Капланского, требуя алгебраичности только элементов алгебры, являющихся произведением менее чем $n$ порождающих.

\begin{notation}
Пусть $X$ --- некоторый конечный алфавит, на буквах которого введ\"{е}н линейный порядок $\succ$. Будем обозначать за $X^*$ множество слов от этого алфавита, прич\"{е}м $X^*$ содержит и пустое слово.
\end{notation}

\begin{definition} \index{Слова!сравнимые}
Введ\"{е}м теперь порядок на словах из $X^*$.

Пусть $u\in X^*$, $v\in X^*$.  Будем считать, что $u\succ v$, если найдутся такие (возможно, пустые) слова $w$, $u'$, $v'$ из $X^*$ и буквы $a\succ b$ из $X$, что $u = wau'$, $v = wbv'$.
\end{definition}

Заметим, что два слова будут несравнимыми, если одно является началом другого.

\begin{definition} \index{Слово!$n$-разбиваемое} \label{def:div}
Назовем слово $W$ {\em $n$-разбиваемым}, если $W$ можно
представить в виде $$W = vu_1u_2\cdots u_n$$ так, чтобы $$u_1\succ
u_2\succ\cdots\succ u_n.$$ Слова $u_1,
u_2,\dots, u_n$ назов\"{е}м {\em $n$-разбиением} слова $W$.
\end{definition}

В этом случае при любой нетождественной перестановке $\sigma$
подслов $u_i$ получается слово $$W_\sigma =
vu_{\sigma(1)}u_{\sigma(2)}\cdots u_{\sigma(n)},$$
лексикографически меньшее $W$. Это свойство некоторые авторы берут
за основу определения понятия $n$-разбиваемости.

\begin{definition}\index{Высота!множества}
Назов\"{е}м множество $\cM\subset X^*$ множеством  {\em ограниченной высоты
$h=\Ht_Y(A)$ над множеством слов $Y= \{ u_1, u_2,\ldots\}$}, если $h$ --- минимальное число такое, что любое слово $u\in \cM$ либо $n$-разбиваемо, либо представимо в виде $$u =  u_{j_1}^{k_1} u_{j_2}^{k_2}\cdots u_{j_r}^{k_r}\text{, где }r\leqslant h.$$
\end{definition}

\begin{definition}\index{Высота!алгебры}\index{Базис!Ширшова}
Назов\"{е}м алгебру $A$ алгеброй {\em ограниченной высоты
$h=\Ht_Y(A)$ над множеством элементов $Y = \{ u_1, u_2,\ldots\}$}, если
$h$ --- минимальное число такое, что любой элемент $x$ из $A$ можно
представить в виде
$$x = \sum_i \alpha_i u_{j_{(i,1)}}^{k_{(i,1)}}
u_{j_{(i, 2)}}^{k_{(i,2)}}\cdots
u_{j_{(i,r_i)}}^{k_{(i,r_i)}},
$$
причем $\{r_i\}$ не превосходят $h$. Множество
$Y$ называется {\em базисом Ширшова} или {\em $s$-базисом} для алгебры $A$.
\end{definition}

\begin{definition} \index{Тождество!допустимое полиномиальное}
Назов\"{е}м полиномиальное тождество некоторой ассоциативной алгебры {\em допустимым}, если коэффициент перед одним из его старших мономов равен единице.
\end{definition}

\begin{theorem}[А.~И.~Ширшов, \cite{Sh57_1, Sh57_2}] \index{Теорема!Ширшова о высоте}
Конечно-порожд\"{е}нная алгебра с допустимым полилинейным тождеством  имеет ограниченную
 высоту над множеством слов над порождающими меньшей, чем $n$ длины, где $n$ --- степень тождества.
\end{theorem}

Используя линеаризацию, из теоремы Ширшова о высоте можно вывести следующее следствие:

\begin{corollary}
Конечно-порожд\"{е}нная алгебра с допустимым полиномиальным тождеством имеет ограниченную
 высоту над множеством слов над порождающими меньшей, чем $n$ длины, где $n$ --- степень тождества.
\end{corollary}

Из теоремы о высоте вытекает решение ряда проблем теории колец
(см. ниже). Проблемы бернсайдовского типа, связанные с теоремой о высоте, рассмотрены в обзоре Е.~И.~Зельманова \cite{Zelmanov}. 
Многообразия ниль-алгебр исследовались А.~Р.~Кемером \cite{Kem01} и Л.~М.~Самойловым \cite{Sam10}.

Понятие {\it $n$-разбиваемости} представляется фундаментальным. Оценки, полученные
В.~Н.~Латышевым на $\xi_n(k)$ --- количество не являющихся $n$-разбиваемыми полилинейных слов степени $k$ над алфавитом из $k$ букв --- привели к фундаментальным
результатам в $\PI$-теории. 
\begin{remark}
$\xi_n(k)$ равно количеству перестановок из $S_k$, избегающих паттерн $n (n-1)\dots 1$.\index{Паттерн!$n (n-1)\dots 1$}
\end{remark}
\begin{remark}
$\xi_n(k)$ есть не
что иное, как количество расстановок чисел от $1$ до $k$ таких,
что никакие $n$ из них (не обязательно стоящие подряд) не идут в
порядке убывания. Это также является верхней оценкой числа всех
перестановочно упорядоченных множеств диаметра $k$ и с длиной наибольшей антицепи $<n$, где множество
называется {\em перестановочно упорядоченным}, если его порядок
есть пересечение двух линейных порядков.
\end{remark}
 Из теоремы о высоте вытекает положительное решение проблем бернсайдовского типа
 для $\PI$-алгебр.
В самом деле, пусть в ассоциативной алгебре над полем выполняется
полиномиальное тождество $f(x_1,\ldots,x_n)=0$. Тогда в ней выполняется и допустимое полилинейное тождество
(т.е. полилинейное тождество, у которого хотя бы один
коэффициент при членах высшей степени равен единице):
$$
x_1 x_2\cdots x_n = \sum_{\sigma}\alpha_{\sigma}x_{\sigma(1)}
x_{\sigma(2)}\cdots x_{\sigma(n)},$$
 где $\alpha_{\sigma}$ принадлежат основному полю. В этом случае, если
 $$W = v u_1 u_2 \cdots u_n$$ является $n$-разбиваемым, то для любой
 перестановки $\sigma$ слово $$W_{\sigma} = vu_{\sigma(1)}u_{\sigma(2)}\cdots u_{\sigma(n)}$$
 лексикографически меньше слова $W$, т.е. $n$-разбиваемое слово
 можно представить в виде линейной комбинации лексикографически меньших слов.
 Значит, $\PI$-алгебра имеет базис из слов, не являющихся $n$-разбиваемыми.
 В силу теоремы А.~И.~Ширшова о высоте,
 $\PI$-алгебра имеет ограниченную высоту.
 Как следствие имеем, что если в $\PI$-алгебре выполняется тождество $x^n = 0$, то эта
 алгебра --- нильпотентна, т.е. все ее слова длины больше, чем некоторое $N$,
 тождественно равны 0. Обзоры, посвященные теореме о высоте, содержатся в работах
 \cite{BBL97,Kem09,BelovRowenShirshov,Ufn90,Dr04}.

Из этой теоремы вытекает положительное решение проблемы А.~Г.~Куроша\index{Проблема!Куроша} и
других проблем бернсайдовского типа для $\PI$-колец, т.~к. если
$Y$~--- базис Ширшова, и все элементы из $Y$~--- алгебраичны, то
алгебра $A$ конечномерна. Тем самым теорема Ширшова дает явное
указание множества элементов, алгебраичность которых ведет к
конечномерности всей алгебры. 

\begin{definition}
$\GK(A)$ --- это {\it размерность Гельфанда--Кириллова алгебры $A$} --- определяется формулой\index{Размерность!Гельфанда--Кириллова}
$$\GK(A)=\lim_{n\to\infty}{\ln V_A(n)\over\ln(n)},$$
где $V_A(n)$ есть {\it функция роста алгебры $A$}, т.е.\index{Функция роста алгебры}
размерность векторного пространства, порожд\"{е}нного словами степени
не выше $n$ от образующих $A$.
\end{definition}

Из теоремы Ширшова вытекает

\begin{corollary}
Пусть $A$~--- конечно-порожд\"{е}нная $\PI$-алгебра. Тогда
$$\GK(A)<\infty.$$
\end{corollary}

Взаимосвязь между проблемой Куроша и теоремой Ширшова о высоте рассмотрена А.~Я.~Беловым в работе~\cite{Bel07_2}.


\begin{notation}
Обозначим через $\deg(A)$ {\em степень алгебры}, т.е.
минимальную степень тождества, которое в ней выполняется.
Через $\Pid(A)$ обозначим {\em сложность} алгебры $A$, т.е. максимальное $k$
такое, что ${\Bbb M}_k$ --- алгебра матриц размера $k\times k$ ---  принадлежит
многообразию $\Var(A)$, порожд\"{е}нному алгеброй $A$.
\end{notation}

Вместо понятия {\it высоты} иногда удобнее пользоваться близким понятием
{\it существенной высоты}.

\begin{definition}\index{Высота!существенная}
Алгебра $A$ имеет {\em существенную высоту $h=\HtEss(A)$} над
конечным множеством $Y$, называемым {\em $s$-базисом алгебры $A$}, если можно
выбрать такое конечное множество $D\subset A$, что $A$ линейно
представима элементами вида $t_1\cdot\cdots\cdot t_l$, где $l\leqslant
2h+1$, и $\forall i (t_i\!\in\! D \vee t_i=y_i^{k_i};y_i\in Y)$,
причем множество таких $i$, что $t_i\not\in D$, содержит не более
$h$ элементов. Аналогично определяется {\em существенная высота}
множества слов.
\end{definition}

Говоря неформально, любое длинное слово есть произведение
периодических частей и ``прокладок'' ограниченной длины.
Существенная высота есть число таких периодических кусков, а
обычная еще учитывает ``прокладки''.

В связи с теоремой о высоте возникли следующие вопросы:

\begin{enumerate}

\item На какие классы колец можно распространить теорему о высоте?

\item Над какими множествами алгебра $A$ имеет ограниченную высоту? В
частности, какие наборы слов можно взять в качестве $\{v_i\}$?

\item Как устроен вектор степеней $(k_1,\ldots,k_h)$? Прежде
всего: какие множества компонент этого вектора являются
существенными, т.е. какие наборы $k_i$ могут быть одновременно
неограниченными? Какова существенная высота? Верно ли, что
множество векторов степеней обладает теми или иными свойствами
регулярности?

\item Как оценить высоту?
\end{enumerate}

Перейдем к обсуждению поставленных вопросов.

 Теорема о высоте была
распространена на некоторые классы колец, близких к ассоциативным.
С.~В.~Пчелинцев \cite{Pchelintcev} доказал ее для альтернативного
и $(-1,1)$ случаев, С.~П.~Мищенко \cite{Mishenko1} получил аналог
теоремы о высоте для алгебр Ли с разреженным тождеством. В работе А.~Я.~Белова \cite{Belov1} теорема о высоте была доказана для некоторого
класса колец, асимптотически близких к ассоциативным, куда
входят, в частности, альтернативные и йордановы $\PI$-алгебры.

В ассоциативном случае доказана теорема

\begin{theorem}  [А.~Я.~Белов, \cite{BBL97}]         \label{ThKurHmg}
Пусть $A$~--- градуированная $\PI$-алгебра\index{Алгебра!градуированная}, $Y$~--- конечное
множество однородных элементов, $Y^{(n)}$ обозначает идеал, порожд\"{е}нный $n$-ми степенями элементов
из $Y$. Тогда если при всех $n$ алгебра
$A/Y^{(n)}$ нильпотентна, то $Y$ есть $s$-базис $A$. Если при этом
$Y$ порождает $A$ как алгебру, то $Y$~--- базис А.~И.~Ширшова алгебры
$A$.
\end{theorem}

Описание базисов\index{Базис!Ширшова}
Ширшова, состоящих из слов, заключено в следующей теореме:

\begin{theorem}[{[\citen{BBL97, BR05}]}]   \index{Базис!Ширшова}         \label{ThBelheight}
Множество слов $Y$ является базисом Ширшова алгебры $A$ тогда и
только тогда, когда для любого слова $u$ длины не выше $m =
\Pid(A)$~--- сложности алгебры $A$~--- множество $Y$ содержит
слово, циклически сопряженное к некоторой степени слова $u$.
\end{theorem}

Аналогичный результат был независимо получен Г.~П.~Чекану \cite{Cio87} и В.~Дренски \cite{Dr00}. Вопросы, связанные с локальной конечностью алгебр, с алгебраическими множествами слов степени не выше сложности алгебры, исследовались в работах \cite{Ufn90, Cio97, Cio88, CK93, Ufn85, UC85}. В этих же работах обсуждались вопросы, связанные с обобщением теоремы о независимости.

Известно, что размерность
Гель\-фан\-да--Ки\-рил\-ло\-ва оценивается существенной высотой и
что $s$-базис является базисом Ширшова тогда и только тогда,
когда он порождает $A$ как алгебру. В представимом случае имеет
место и обратное утверждение.

\begin{theorem}[А.~Я.~Белов, \cite{BBL97}]
Пусть $A$~--- конечно-порожд\"{е}нная представимая алгебра и пусть
$\HtEss_Y(A)<\infty$. Тогда $\HtEss_Y(A)=\GK(A)$.
\end{theorem}

\begin{corollary}[В.~Т.~Марков]
Размерность Гель\-фан\-да--Ки\-рил\-ло\-ва ко\-неч\-но
по\-рож\-д\"{е}н\-ной представимой алгебры есть целое число.
\end{corollary}

\begin{corollary}
Если $\HtEss_Y(A)<\infty$ и алгебра $A$ представима, то
$\HtEss_Y(A)$ не зависит от выбора $s$-базиса $Y$.
\end{corollary}

В этом случае размерность Гель\-фан\-да--Ки\-рил\-ло\-ва также
равна существенной высоте в силу локальной представимости относительно свободных алгебр.

\section{$n$-разбиваемость и теорема Дилуорса}\label{sec:div}

Значение понятия  {\it
$n$-раз\-би\-ва\-ем\-ос\-ти} \index{Слово!$n$-разбиваемое} выходит за рамки проблематики,
относящейся к проблемам бернсайдовского типа. Оно играет роль и
при изучении полилинейных слов, в оценке их количества, где {\it
полилинейным} называется слово, в которое каждая буква входит
не более одного раза. 
\begin{remark}
Полилинейное слово не является $m$-разбиваемым тогда и только тогда, когда избегает паттерн $m (m-1)\dots 1$.\index{Паттерн!$n (n-1)\dots 1$}
\end{remark}
В.~Н.~Латышев~([\citen{Lat72}]) применил теорему Дилуорса для
получения оценки числа не являющихся $m$-разбиваемыми полилинейных слов степени $n$ над
алфавитом $\{a_1,\dots,a_n\}$. Эта
оценка:  ${(m - 1)}^{2n}$ и она близка к реальности. Напомним
теорему Дилуорса.

\begin{theorem}[Р.~Дилуорс, \cite{Dil50}] \index{Теорема!Дилуорса}
Пусть $n$ --- наибольшее количество
элементов антицепи данного конечного частично упорядоченного
множества $M$. Тогда $M$ можно разбить на $n$  попарно
непересекающихся цепей.
\end{theorem}

Рассмотрим полилинейное слово $W$ из $n$ букв. \index{Слово!полилинейное}
Положим $a_i\succ
a_j$, если $i>j$ и буква $a_i$ стоит в слове $W$ правее $a_j$.
Условие не $m$-разбиваемости означает отсутствие антицепи из $m$
элементов. Тогда по теореме Дилуорса все позиции (и,
соответственно, буквы $a_i$) разбиваются на $(m-1)$ цепь. Сопоставим
каждой цепи свой цвет. Тогда раскраска позиций и раскраска букв
однозначно определяет слово $W$. А число таких раскрасок не
превосходит $$(m-1)^n\times (m-1)^n=(m-1)^{2n}.$$
Автору удалось улучшить оценку В.~Н.~Латышева для количества полилинейных слов, не являющихся $n$-разбиваемыми. Этот результат и другие вопросы, связанные с полилинейными словами, рассматриваются в главе \ref{ch:pu_number}.

Привед\"{е}нные оценки позволяют получить
прозрачное доказательство теоремы Регева\index{Теорема!Регева} о том, что тензорное
произведение $\PI$-алгебр снова является $\PI$-алгеброй (см.~\cite{Lat72}).

В 1996 г. Г.~Р.~Челноков предложил использовать подход В.~Н.~Латышева для работы с неполилинейными словами. Этот подход сыграл важную роль при получении субэкспоненциальных оценок в теореме Ширшова о высоте.

Вопросы, связанные с перечислением полилинейных слов, не
являющихся $n$-раз\-би\-ва\-е\-мы\-ми, имеют самостоятельный
интерес. Количество полилинейных слов длины $l$ над $l$-буквенным алфавитом, не являющихся $3$-разбиваемыми, равно $k$-му числу Каталана. С помощью чисел Каталана можно показать биекцию между множествами перестановок, избегающими паттерн $321$, паттерн $132$ и паттерн $231$ (см.~\cite{Kit11}).\index{Паттерн!$321$}\index{Паттерн!$231$}\index{Паттерн!$132$}

В.~Н.~Латышев в работе \cite{LatyshevMulty} поставил
проблему конечной базируемости множества старших полилинейных слов\index{Слово!полилинейное}
для $T$-идеала относительно взятия надслов и изотонных
подстановок. Из этой проблемы вытекает проблема Шпехта\index{Проблема!Шпехта} для
полилинейных многочленов. 
Для решения проблемы Латышева надо уметь переводить свойства
$T$-идеалов на язык полилинейных слов. 

В работах \cite{BBL97,Belov1} была предпринята попытка осуществить программу перевода
структурных свойств алгебр на язык комбинаторики слов. На язык
полилинейных слов такой перевод осуществить проще, в дальнейшем
можно получить информацию и о словах общего вида.
Полилинейные слова также исследовал А.~Р.~Кемер \cite{Kem96, Kem02}. Отметим, что аппарат комбинаторики слов имеет важное значение для различных областей алгебры тождеств (см., например, \cite{Lot83, Lot02, MPS09, BBL97, Ufn90, Sap14}). В работе \cite{Lot02} рассматривается теорема Ширшова о высоте.

\section{Оценки высоты и степени нильпотентности}

Первоначальное доказательство А.~И.~Ширшова
хотя и было чисто комбинаторным (оно основывалось на технике
элиминации, развитой им в алгебрах Ли, в частности, при
доказательстве теоремы о свободе), однако оно давало только
упрощ\"{е}нные рекурсивные оценки. Позднее А.~Т.~Колотов
\cite{Kolotov} получил оценку на $\Ht(A)\leqslant l^{l^n}$\
($n=\deg(A)$,\, $l$~--- число образующих). А.~Я.~Белов в работе \cite{Bel92} показал, что $\Ht(n,l)<2nl^{n+1}$.  Экспоненциальная оценка теоремы Ширшова о высоте изложена также в работах \cite{BR05,Dr00,Kh11(2)}. Данные оценки улучшались А.~Клейном \cite{Klein,Klein1}. В 2001 году Е.~С.~Чибриков в работе \cite{Ch01} доказал, что $\Ht(4,l) \geqslant  (7k^2-2k).$  Верхние и нижние оценки на структуру кусочной периодичности, полученные автором в работах \cite{Kh11, Kh11(2)}, изложены в главе \ref{ch:pp}.

Ф.~Петровым и П.~Зусмановичем в работе \cite{PZ13} была получена связь между высотой градуированной\index{Алгебра!градуированная} алгебры и е\"{е} нейтрального компонента.

В 2011 году А.~А.~Лопатин \cite{Lop11} получил следующий результат:

\begin{theorem}
Пусть $C_{n,l}$ --- степень нильпотентности свободной $l$-порожд\"{е}нной алгебры и удовлетворяющей тождеству $x^n=0.$ Пусть $p$ --- характеристика базового поля алгебры --- больше чем $n/2.$ Тогда
$$(1): C_{n,l}<4\cdot 2^{n/2} l.$$
\end{theorem}

Е.~И.~Зельманов поставил следующий вопрос в Днестровской тетради \cite{Dnestrovsk} в 1993 году:
\begin{ques} \label{ques}
Пусть $F_{2,m}$ --- свободное $2$-порожд\"{е}нное ассоциативное кольцо с тождеством $x^m=0.$ Верно ли, что класс нильпотентности кольца $F_{2,m}$ раст\"{е}т экспоненциально по $m?$
\end{ques}

Сравним полученные результаты  с нижней
оценкой для высоты. Высота алгебры $A$ не меньше ее размерности
Гель\-фан\-да--Ки\-рил\-ло\-ва $\GK(A)$. Для алгебры
$l$-порожд\"{е}нных общих матриц порядка $n$ данная размерность равна
$(l-1)n^2+1$ \cite{Procesi, Bel04}. В то же
время, минимальная степень тождества этой алгебры равна $2n$ согласно теореме Ами\-цу\-ра--Ле\-виц\-ко\-го. Имеет место следующее:

\begin{proposition}
Высота $l$-порожд\"{е}нной $\PI$-алгебры степени $n$, а также
множества  не $n$-раз\-би\-ва\-ем\-ых слов над $l$-бук\-вен\-ным
алфавитом, не менее, чем $(l-1)n^2/4+1$.
\end{proposition}

Нижние оценки на индекс нильпотентности были установлены
Е.~Н.~Кузьминым в работе \cite{Kuz75}.  Е.~Н.~Кузьмин привел
пример $2$-порожд\"{е}нной алгебры с тождеством $x^n=0$, индекс
нильпотентности которой строго больше $${(n^2+n-2)\over2}.$$
Результат Е.~Н.~Кузьмина изложен в монографиях \cite{Dr04, BR05}.
Вопрос нахождения нижних оценок рассматривается в главе \ref{ch:pu_number} (см. также \cite{Kh11}).

В то же время для случая нулевой характеристики и счетного числа
образующих Ю.~П.~Размыслов \cite{Razmyslov3}
получил верхнюю оценку на индекс нильпотентности, равную $n^2$. Автор получил субэкспоненциальные оценки на индекс нильпотентности для произвольной характеристики (см. теорему \ref{th_nil}).

\section{Цели и результаты исследования}

В диссертации ставится цель получения как можно более точных оценок высоты в смысле Ширшова. Разрабатываются методы описания комбинаторной структуры слов, не являющихся $n$-разбиваемыми.

В главе \ref{ch:nil} доказывается субэкспоненциальность индекса нильпотентности. В первой части главы \ref{ch:height} доказываются оценки существенной высоты, т.е. количества различных
периодических фрагментов в не являющемся $n$-разбиваемым слове. Во второй части главы \ref{ch:height} доказана субэкспоненциальность высоты в смысле Ширшова.
В главе \ref{ch:pp} оценивается существенная высота в некоторых случаях и на основании этих оценок проводится альтернативное доказательство теоремы Ширшова о высоте.
В главе \ref{ch:pu_number} рассмотрены вопросы $n$-разбиваемости в полилинейном случае.
В главе \ref{final} указаны пути дальнейшего улучшения оценок высоты и продолжения исследования комбинаторной структуры слов $\PI$-алгебр.

\section{Основные результаты}

В диссертации получен ответ на вопрос \ref{ques} Е.~И.~Зельманова: в действительности искомый класс нильпотентности раст\"{е}т субэкспоненциально.

\begin{theorem} \label{th_nil}
Индекс нильпотентности $l$-порожденной алгебры ассоциативной алгебры с тождеством $x^n=0$ не превышает
$$2^{27}n^{12(\log_3 n + 3\log_3\log_3 n + 6)}.$$
\end{theorem}

\begin{theorem}     \label{c:main2}
Высота множества слов, не являющихся $n$-разбиваемыми (см. опр. \ref{def:div}), над $l$-буквенным
алфавитом относительно множества слов длины меньше $n$ не
превышает $\Phi(n,l)$, где
$$\Phi(n,l) = 2^{96} l\cdot n^{12\log_3 n + 36\log_3\log_3 n + 91}.$$
\end{theorem}
Из данной теоремы путем некоторого огрубления и упрощения оценки получается, что
при фиксированном $l$ и $n \rightarrow\infty$
$$\Phi(n,l) = n^{12(1+o(1))\log_3{n}},$$

а при фиксированном $n$ и $l\rightarrow\infty$
$$\Phi(n,l) < C(n)l.$$

Также доказательство этих результатов содержится в работе \cite{BK}.

\begin{corollary}
Высота $l$-порожд\"{е}нной $\PI$-алгебры с допустимым полиномиальным
тождеством степени $n$ над множеством слов длины меньше $n$ не
превышает $\Phi(n,l)$.
\end{corollary}



Как следствие получаются субэкспоненциальные оценки на индекс
нильпотентности $l$-порожд\"{е}нных ниль-алгебр степени $n$ для
произвольной характеристики.

Другим основным результатом диссертации является следующая теорема:

\begin{theorem}      \label{c:main1}
Пусть $l$, $n$ и $d\geqslant n$ --- некоторые натуральные числа. Тогда все
$l$-порожд\"{е}нные слова длины не меньше, чем $\Psi(n,d,l)$, либо
содержат $x^d$, либо являются $n$-разбиваемыми, где
$$
\Psi(n,d,l)=2^{27} l (nd)^{3 \log_3 (nd)+9\log_3\log_3 (nd)+36}.
$$
\end{theorem}
Из данной теоремы путем некоторого огрубления и упрощения оценки получается, что
при фиксированном $l$ и $nd \rightarrow\infty$
$$\Psi(n,d,l) = (nd)^{3(1+o(1))\log_3(nd)},$$

а при фиксированных $n, d$ и $l\rightarrow\infty$
$$\Psi(n,d,l) < C(n,d)l.$$

\begin{notation}
Для вещественного числа $x$ положим $\ulcorner x\urcorner := -[-x].$ Таким образом мы округляем нецелые числа в большую сторону.
\end{notation}
В процессе доказательства теоремы \ref{c:main2} доказывается следующая теорема, оценивающая существенную высоту:

\begin{theorem} \label{ThThick}
Существенная высота $l$-порожд\"{е}нной $PI$-алгебры с допустимым полиномиальным тождеством степени $n$ над множеством слов длины меньше $n$ меньше, чем $\Upsilon (n, l),$ где
$$\Upsilon (n, l) = 2n^{3\ulcorner\log_3 n\urcorner + 4} l.$$
\end{theorem}

\begin{definition} \index{Высота!выборочная!малая}
а) Число $h$ называется {\em малой выборочной высотой} с границей $k$
слова $W$ над множеством слов $Z$, если $h$ --- такое максимальное
число, что у слова $W$ найд\"{е}тся $h$ попарно непересекающихся
циклически несравнимых подслов вида $z^m,$ где $z\in Z, m>k$.


б) Множество слов $V$ имеет малую выборочную высоту $h$ \index{Высота!выборочная!множества}
над некоторым множеством слов $Z$, если $h$ является точной
верхней гранью малых (больших) вы\-бо\-роч\-ных высот над $Z$ его
элементов.
\end{definition}

\begin{theorem}     \label{verh}
Малая выборочная высота множества слов, не являющихся сильно $n$-разбиваемыми,
над $l$-буквенным алфавитом относительно множества ациклических
слов длины $2$ не больше $\beth(2,l,n)$, где
$$\beth(2,l,n) = {(2l-1)(n-1)(n-2)\over 2}.$$
\end{theorem}

\begin{theorem} \label{niz}
Малая выборочная высота множества слов, не являющихся сильно $n$-разбиваемыми,
над $l$-буквенным алфавитом относительно множества ациклических
слов длины $2$ при фиксированном $n$ больше, чем $\alpha(n,l)$, где
$$\alpha(n,l) =  {n^2 l\over 2} (1 - o(l)).$$
Более точно, $$\alpha(n,l) ={(l-2^{n-1})(n-2)(n-3)\over2}.$$
\end{theorem}

 \begin{theorem}     \label{verh1}
Малая выборочная высота множества слов, не являющихся сильно $n$-разбиваемыми,
над $l$-буквенным алфавитом относительно множества ациклических
слов длины $3$ не больше $\beth(3,l,n)$, где
$$\beth(3,l,n) = {(2l-1)(n-1)(n-2)}.$$
\end{theorem}

 \begin{theorem}     \label{verh2}
Малая выборочная высота множества слов, не являющихся сильно $n$-разбиваемыми,
над $l$-буквенным алфавитом относительно множества ациклических
слов длины $(n-1)$ не больше $\beth(n-1,l,n)$, где
$$\beth(n-1,l,n) = (l-2)(n-1).$$
\end{theorem}

\begin{theorem}\label{th:main}
$\xi_k(n)$ --- количество не $(k+1)$-разбиваемых перестановок $\pi\in S_n$ --- не больше, чем ${k^{2n}\over ((k-1)!)^2}$.
\end{theorem}

\begin{theorem}\label{th:main2}
$\varepsilon_k(n)$ --- количество $n$-элементных перестановочно-упо\-ря\-до\-чен\-ных множеств с максимальной антицепью длины $k$ --- не больше, чем $\min\{{k^{2n}\over (k!)^2}, {(n-k+1)^{2n}\over ((n-k)!)^2}\}$.
\end{theorem}

Таким образом, {\bf основные результаты диссертации} следующие:

\begin{enumerate}
\item
Пусть $l$, $n$ и $d\geqslant n$ --- некоторые натуральные числа. Доказано, что все
$l$-порожд\"{е}нные слова длины не меньше, чем $\Psi(n,d,l)$, либо
содержат $x^d$, либо являются $n$-разбиваемыми, где
$$
\Psi(n,d,l)=2^{27} l (nd)^{3 \log_3 (nd)+9\log_3\log_3 (nd)+36}.
$$
\item
Для вещественного числа $x$ положим $\ulcorner x\urcorner := -[-x].$ Доказано, что
существенная высота $l$-порожд\"{е}нной $PI$-алгебры с допустимым полиномиальным тождеством степени $n$ над множеством слов длины меньше $n$ меньше, чем $\Upsilon (n, l),$ где
$$\Upsilon (n, l) = 2n^{3\ulcorner\log_3 n\urcorner + 4} l.$$
\item \index{Высота!существенная}
Доказано, что высота множества слов, не являющихся $n$-разбиваемыми, над $l$-буквенным
алфавитом относительно множества слов длины меньше $n$ не
превышает $\Phi(n,l)$, где
$$\Phi(n,l) = 2^{96} l\cdot n^{12\log_3 n + 36\log_3\log_3 n + 91}.$$
\item
Получены нижние и верхние оценки на существенную высоту в случае периодов длины $2$, $3$ и близкой к степени тождества, причем нижние и верхние оценки отличаются в константу раз для слов длины $2$. Установлена связь с проблемами рамсеевского типа.
\item
Получена близкая к реальности оценка количества полилинейных слов, не являющихся $n$-разбиваемыми. Впервые в рамках $\PI$-теории приведено перечисление полилинейных слов, не являющихся $n$-разбиваемыми.
\end{enumerate}
\section{Методы исследования}

В работе используются современные комбинаторные методы теории колец. В частности, техника В.~Н.~Латышева переносится на
неполилинейный случай. Автором предложена иерархическая структура, позволяющая получить субэкспоненциальную
оценку в теореме Ширшова о высоте. Автор применил проблематику рамсеевского типа к теории высоты в смысле Ширшова. В работе приведено использование методов динамического программирования для перечисления не $n$-разбиваемых полилинейных слов.

\section{Апробация и публикации на тему диссертации}

Результаты диссертации неоднократно докладывались автором на следующих научно-исследовательских семинарах:
\begin{enumerate}
\item Научно-исследовательский семинар ``Теория колец'' кафедры высшей алгебры МГУ в 2010--2014 гг.
\item Научно-исследовательский семинар А.~М.~Райгородского в 2011--2012 гг.

Кроме того, результаты докладывались на следующих семинарах:

\item ``Bar-Ilan Algebra Seminar'' (Bar-Ilan University)  (December 18, 2013).
\item ``PI-Seminar'' (Technion (Israel Institute of Technology))  (December 20, 2013).
\end{enumerate}

Результаты диссертации докладывались автором на следующих конференциях:

\begin{enumerate}
	\item International conference on Ring Theory dedicated to the 90th anniversary of A. I. Shirshov. Russia, Novosibirsk (July 13--19, 2011). Invited speaker.
	\item  International conference on Classical Aspects of Ring Theory and Module Theory. Poland, Bedlewo (July 14--20, 2013).
	\item Международная научная конференция студентов, аспирантов и молодых уч\"{е}ных ``Ломоносов-2013''. Россия, Москва (8--13 апреля, 2013).
 	\item Международный алгебраический симпозиум, посвященный 80-летию кафедры высшей алгебры механико-математического факультета МГУ и 70-летию профессора А.~В.~Михал\"{е}ва. Россия, Москва (15--18 ноября, 2010).
  	\item Международная научная конференция студентов, аспирантов и молодых уч\"{е}ных ``Ломоносов-2011''. Россия, Москва (11--15 апреля 2011).
	\item Международная научная конференция студентов, аспирантов и молодых уч\"{е}ных ``Ломоносов-2012''. Россия, Москва (9--13 апреля, 2012).
	\item XII международная конференция ``Алгебра и теория чисел: современные проблемы и приложения'', посвященная восьмидесятилетию профессора В.~Н.~Латышева. Россия, Тула (21--25 апреля, 2014).
	\item Int. conference ``Modern algebra ad its applications'', Special Session de\-di\-ca\-ted to pro\-fessor Gigla Janashia. Georgia, Batumi, (19--25~September~2011).
\end{enumerate}

{\bf Работы по теме диссертации:}

\begin{enumerate}
\item М.~И.~Харитонов. {\em Двусторонние оценки существенной высоты в теореме Ширшова о высоте.} Вестник Московского университета, Серия 1, Математика. Механика. 2(2012), 20--24.
\item М.~И.~Харитонов. {\em Оценки на структуру кусочной периодичности в теореме Ширшова о высоте.} Вестник Московского университета, Серия 1, Математика. Механика. 1(2013), 10--16.
\item \label{item_BK12}
А. Я. Белов, М. И. Харитонов. {\em Субэкспоненциальные оценки в теореме Ширшова о высоте.} Мат. сб., 4(2012), 81--102. 
\item \label{item_BK12_2}
А.~Я.~Белов, М.~И.~Харитонов. {\em Оценки высоты в смысле Ширшова и на количество фрагментов малого периода.} Фундамент. и прикл. матем., 17:5 (2012), 21--54. (Journal of Mathematical Sciences, September 2013,
Volume 193, Issue 4, pp 493--515); A. Ya. Belov, M. I.
Kharitonov, {\em Subexponential estimates in the height theorem and
estimates on numbers of periodic parts of small periods}, J. Math.
Sci., 193:4 (2013), 493--515.
\item М.~И.~Харитонов.{\em Оценки на количество перестановочно-упорядоченных множеств.} Вестник Московского университета, Серия 1, Математика. Механика. 3(2015), 24--28.
\item М.~И.~Харитонов.{\em Оценки, связанные с теоремой Ширшова о высоте.} Чебышевский сб., 15:4 (2014), 55--123.
\item \label{item_Bat11}
A.~Belov-Kanel, M.~Kharitonov. {\em Subexponential estimations in Shirshov's height theorem.} Georgian Schience foundation., Georgian Tech\-ni\-cal Uni\-ver\-si\-ty, Batumi State University, Ramzadze mathematical institute, Int. conference ``Modern algebra ad its applications'' (Batumi, Sept. 2011), Proceedings of the Int. conference, 1, Journal of Mathematical Sciences September 2013, Volume 193, Issue 3, 378--381, Special Session dedicated to Professor Gigla Janashia.
\item \label{item_Bed13}
A.~Belov-Kanel, M.~Kharitonov. {\em Subexponential estimates in the height theorem and estimates on numbers of periodic parts of small periods.} Classical Aspects of Ring Theory and Module Theory, Abstracts (Bedlewo, Poland, July 14--20), Stefan Banach International Mathematical Center, 2013, 58--61.
\item
М.~И.~Харитонов. {\em Оценки на количество перестановочно-упорядоченных множеств.} Материалы Международного молодежного научного форума ``Ломоносов-2013'' (Москва, МГУ им. М. В. Ломоносова, 8--13 апреля 2013 г.), Секция ``Математика и механика'', подсекция ``Математическая логика, алгебра и теория чисел'', М.: МАКС Пресс, 2013, 15.
\item
М.~И.~Харитонов. {\em Существенная высота алгебр с полиномиальными тождествами и графы подслов.} Материалы XIX Международной научной конференции студентов, аспирантов и молодых ученых ``Ломоносов'' (Москва, МГУ им. М. В. Ломоносова, 9--13 апреля 2012 г.), Секция ``Математика и механика'', подсекция ``Математическая логика, алгебра и теория чисел'', М.: МАКС, 2012, 18.
\item
М.~И.~Харитонов. {\em Субэкспоненциальные оценки в теореме Ширшова о высоте.} Материалы XVIII Международной научной конференции студентов, аспирантов и молодых ученых ``Ломоносов''. (Москва, МГУ им. М. В. Ломоносова, 11--15 апреля 2011 г.), Секция ``Математика и механика'', подсекция ``Математика'', М.: МАКС, 2011, 176.

\end{enumerate}

В работах \ref{item_BK12}, \ref{item_BK12_2}, \ref{item_Bed13} и \ref{item_Bat11} М.~И.~Харитонову принадлежат концепция иерархической конструкции и техническая реализация.


\section{Структура диссертации}

Диссертация состоит из оглавления, введения, шести глав,
предметного указателя и списка литературы, который включает \citen{LKTG12} наименований.


\section*{Благодарности}

Автор глубоко благодарен своем научным руководителям --- доктору
физико-математических наук профессору Алексею Яковлевичу
Белову и доктору физико-математических наук профессору Александру Васильевичу
Михал\"{е}ву за постановку задач, обсуждение результатов
и постоянное внимание к работе.

Также автор хотел бы поблагодарить за внимание и
обсуждения работы доктора физико-математических наук, профессора
Виктора Николаевича Латышева и всех участников  семинара ``Теория колец''.

Автор выражает отдельную благодарность Андрею Михайловичу Райгородскому и всем участникам его семинара.

Работа выполнена при частичной финансовой поддержке БФ Система (стипендиальная программа ``Лифт в будущее''), фонда Саймонса, фонда Дмитрия Зимина ``Династия'', гранта О.~В.~Дерипаска талантливым студентам, аспирантам и молодым ученым МГУ имени М.В.Ломоносова, \linebreak гранта~РФФИ~\No14-01-00548.

\newpage

\chapter{Проблемы бернсайдовского типа и тождества в теории колец}

\section{Теория колец в контексте проблематики\\ бернсайдовского типа}
Проблемы бернсайдовского типа оказали огромное влияние на алгебру XX века. Центральное место имела проблема Бернсайда для групп.

\begin{definition}\index{Группа конечно-порожд\"{е}нная}
{\em Свободная $m-$порожд\"{е}нная бернсайдова группа $B(m,n )$ периода $n$} по определению есть группа с заданием 
$$\langle a_1, a_2,\dots, a_m|X^n=1\text{ для всех слов }X\rangle.$$

\end{definition}

\begin{problem}[{[\citen{Bur1902}]}]\index{Проблема!Бернсайда!для групп}
Будет ли конечной всякая периодическая конечно-порожд\"{е}нная группа?
\end{problem}

\begin{proposition}
Если $B(m,n)$ бесконечна, то бесконечны и группы $B(m,kn)$ и
$B(m',n)$ при $m'\le m$.
\end{proposition}

Первоначальные усилия были направлены в сторону положительного решения проблемы, так как все известные частные случаи давали позитивный ответ. Например, если группа порождена $m$ элементами и порядок каждого е\"{е} элемента является делителем числа $4$ или $6$, она конечна.

В связи с этим была поставлена так называемая ``ослабленная'' проблема Бернсайда (также известная как проблема Бернсайда--Магнуса (см.~\cite{Ad10})):

\begin{problem}
Верно ли, что среди всех $m-$порожденных конечных групп с тождеством $x^n=1$ есть максимальная?
\end{problem}

При простом $n$ эта проблема была решена А.~И.~Кострикиным \cite{Kos86}. Он свел задачу к локальной конечности алгебр Ли над полем $\ZZ_p$ с тождеством $$[\dots[x,y],y],\dots, y] = 0.$$ В общем случае это составило знаменитый результат Е.~И.~Зельманова \cite{Zel90, Zel91}, который установил локальную конечность алгебраических $\PI$-алгебр Ли над полем произвольной характеристики.

Первый контрпример к
``неограниченной'' проблеме был получен благодаря универсальной конструкции Голода--Шафаревича. Это вытекало из конструкции бесконечномерного ниль-кольца (разумеется, индекс нильпотентности этого кольца неограничен). Хотя ``ослабленная'' проблема Бернсайда была имела положительное решение, вопрос о
локальной конечности групп с тождеством $x^n=1$ был решен отрицательно
в знаменитых работах П.~С.~Новикова и С.~И.~Адяна \cite{NA68_1, NA68_2, NA68_3, NA68_4}: было доказано существование
для любого нечетного $n\ge 4381$ бесконечной группы с $m>1$ образующими, удовлетворяющей
тождеству $x^n=1$. Эта оценка была улучшена до $n\geqslant 665$ С.~И.~Адяном \cite{Ad75}\footnote{Недавно С.~И.~Адян улучшил эту оценку до $n\geqslant 101$. Наилучшие оценки в проблемах бернсайдовского типа для групп были получены представителями школы С.~И.~Адяна.}.
Позднее А.~Ю.~Ольшанский \cite{Ol82} предложил геометрически наглядный вариант доказательства
для нечетных $n>10^{10}$.

Ч\"етный случай оказался значительно сложнее неч\"етного. Результат о бесконечности групп $B(m, n)$ для ч\"етных значений периода
$n$ был объявлен независимо С.~В.~Ивановым \cite{Iv92} для $n\ge 2^{48}$ и
И.~Г.~Лыс\"{е}нком~\cite{Lis96} для $n\ge 2^{13}$. Подробное доказательство результата
С.~В.~Иванова опубликовано в статье \cite{Iv94}, в которой фактически разработан
вариант теории, применимый к бернсайдовым группам периода $n\ge
2^{48}$, делящегося на $2^9$.

Построения в полугруппах, как правило, проще, чем в группах. Например, вопрос
о существовании конечно-порожд\"{е}нной ниль-полугруппы, то есть полугруппы, каждый
элемент которой в некоторой степени обращается в нуль, имеет тривиальный положительный
ответ: уже в алфавите из двух букв имеются слова сколь угодно большой длины, не
содержащие трех подряд одинаковых подслов (так называемых кубов слов). Этот факт был независимо доказан А.~Туэ \cite{Th1906} и М.~Морсом \cite{Mor21}.

\begin{theorem}[Морс--Туэ] \index{Теорема!Морса--Туэ}
Пусть $X = \{a, b\}$, $X^*$ --- множество слов над алфавитом $X$, подстановка $\phi$ задана соотношениями $$\phi(a)=ab, \phi(b)=ba.$$ Тогда если слово $w\in X^*$ --- бескубное, то и $\phi(w)$ --- бескубное.
\end{theorem}
В дальнейшем этот результат был усилен А.~Туэ \cite{Th1906}:
\begin{theorem}[Туэ--1]\index{Теорема!Туэ}
 Пусть $X = \{a, b, c\}$, $X^*$ --- множество слов над алфавитом $X$, подстановка $\phi$ задана соотношениями $$\phi(a)=abcab, \phi(b)=acabcb, \phi(c)=acbcacb.$$ Тогда если слово $w\in X^*$ --- бесквадратное, то и $\phi(w)$ --- бесквадратное.
\end{theorem}
\begin{theorem}[Туэ--2]\index{Теорема!Туэ}
 Пусть $L$ и $N$ --- алфавиты, $N^*$ --- множество слов над алфавитом $N$, для подстановки $\phi:L\rightarrow N^*$ выполнены следующие условия:
\begin{enumerate}
\item если длина $w$ не больше $3$, то $\phi(w)$ --- бесквадратное;
\item если $a$, $b$ --- буквы алфавита $L$, а $\phi(a)$ --- подслово $\phi(b)$, то $a=b$.
\end{enumerate}
Тогда если слово $w\in L^*$ --- бесквадратное, то и $\phi(w)$ --- бесквадратное.
\end{theorem}
Полное алгоритмическое описание бесквадратных подстановок было впервые получено Дж.~Берстелем \cite{Ber77_1, Ber77_2}. В дальнейшем М.~Крошмором было предложено следующее описание:
\begin{theorem}[Крошмор, \cite{Cr82}] \index{Теорема!Крошмора}
 Пусть $L$ и $N$ --- алфавиты, $N^*$ --- множество слов над алфавитом $N$, $\phi:L\rightarrow N^*$ --- подстановка, $M$ --- наибольший размер образа буквы алфавита $L$ при подстановке $\phi$, $m$ --- наименьший размер образа буквы $L$ при той же подстановке, $$k=\max\{3, 1+[(M-3)/m]\}.$$ Тогда подстановка $\phi$ --- бесквадратная в том и только в том случае, когда для любого бесквадратного слова $w$ длины $\leqslant k$ слово $\phi(w)$ будет бесквадратным.
\end{theorem}

Недавно А.~Я.~Беловым и И.~А.~Ивановым-Погодаевым был построен пример конечно определенного бесконечной ниль-полугруппы (см. \cite{IK14})\index{Ниль-!полугруппа}. Примеры конечно определенного ненильпотентного ниль-кольца и конечно определенной бесконечной периодической группы пока не построены.

Обзор проблем бернсайдовского типа в теории колец представлен в монографии М.~Сапира \cite{Sap14} и статье А.~Я.~Белова \cite{Bel07}.

Аналог проблемы Бернсайда для ассоциативных алгебр был сформулирован А.~Г.~Курошем в тридцатых годах двадцатого века:

\begin{question}\index{Проблема!Бернсайда!для ассоциативных алгебр}
Пусть все $1$-порожд\"{е}нные подалгебры конечно-порожд\"{е}нной ассоциативной алгебры $A$ конечномерны. Будет ли $A$ конечномерна?
\end{question}

\begin{proposition}
Пусть $A$ --- ассоциативная $K$-алгебра, $K$ --- коммутативное кольцо, $a\in A$.  Подалгебра, порожд\"{е}нная $a$, конечномерна тогда и только тогда, когда $a$ --- алгебраический элемент.
\end{proposition}

Отрицательный ответ был получен Е.~С.~Голодом в 1964 году более сложным способом, чем  в случае полугрупп. Например, в полугрупповом случае найд\"{е}тся 3-порожд\"{е}нная бесконечная полугруппа, удовлетворяющая тождеству $x^2=0$. Для ассоциативных алгебр над полем характеристики $\geqslant 3$ это невозможно.

\begin{definition}\index{Класс нильпотентности} \index{Ниль-!индекс} \index{Индекс!нильпотентности}
{\em Классом нильпотентности}, {\em индексом нильпотентности} или {\em ниль-индексом} ассоциативной алгебры $A$ называется минимальное натуральное число $n$ такое, что $A^n=0$.
\end{definition}

\begin{theorem}[Курош, \cite{Kur41}]
Любая удовлетворяющая тождеству $x^2=0$ алгебра над полем характеристики $\geqslant 3$ или 0 является нильпотентной класса $3$. Любая нильпотентная конечно-порожд\"{е}нная алгебра конечномерна.
\end{theorem}

\begin{definition} \index{Индекс!алгебры}
{\em Индексом} алгебраической алгебры $A$ называется супремум степеней минимальных аннулирующих многочленов элементов $A$.
\end{definition}

А.~Г.~Курош в 1941 году в работе \cite{Kur41} сформулировал проблему Бернсайда для алгебр конечного индекса:

\begin{question}\index{Проблема!Куроша}
\begin{enumerate}
	\item Верно ли, что конечно-порожд\"{е}нная ниль-алгебра конечного ниль-индекса нильпотентна?
	\item Верно ли, что конечно-порожд\"{е}нная алгебра конечного индекса конечномерна?
\end{enumerate}
\end{question}

В 1946 году  И.~Капланский \cite{Kap46} и Д.~Левицкий \cite{Lev46} ответили на эти вопросы положительным образом для алгебр с допустимым полиномиальным тождеством, где полиномиальное тождество называется {\em допустимым}, если один из его коэффициентов равен 1. Заметим, что в случае ассоциативных алгебр над полями любое полиномиальное тождество является допустимым.

В 1948 году И.~Капланский отказался от условия конечности индекса:

\begin{theorem}[Капланский, \cite{Kap48}]\label{essay:kap} \index{Теорема!Капланского}
Любая конечно-порожд\"{е}нная алгебраическая алгебра над коммутативным кольцом, удовлетворяющая допустимому полиномиальному тождеству, конечномерна.
\end{theorem}

Проблема Куроша для альтернативных и йордановых алгебр была решена И.~П.~Шестаковым \cite{Shes83}.

 Доказательства в работах И.~Капланского~\cite{Kap46} и Д.~Левицкого~\cite{Lev46} были проведены структурными методами.  Заметим, что структурная теория, развитая в работах Ш.~Амицура, И.~Капланского и др.,
позволила решить ряд классических проблем и служит основой для дальнейших
исследований. Обычная схема структурных рассуждений состояла в исследовании полупростой части (матриц над телами) и редукции к полупростой ситуации
пут\"{е}м факторизации по радикалу. Несмотря на свою эффективность, рассуждения такого рода не являются
конструктивными. Кроме того, доказательства, которые получаются с помощью
структурной теории, не дают понимания происходящего ``на микроуровне'', т.~е.
на уровне слов и соотношений между ними.

В 1958 году А.~И.~Ширшов доказал свою знаменитую теорему о высоте чисто комбинаторными методами \cite{Sh53, Sh54, Sh57_1, Sh57_2, Sh58, Sh62(1), Sh62(2), BBL97}. Из теоремы Ширшова о высоте следует решение проблемы Куроша для ассоциативных $\PI$-алгебр в усиленной форме, т.е. требование алгебраичности алгебры заменяется на требование алгебраичности слов от порождающих длины менее степени тождества в алгебре, а требование конечности отбрасывается. Таким образом, результат А.~И.~Ширшова есть улучшение теоремы \ref{essay:kap}.

\begin{theorem}[А.~И.~Ширшов, \cite{Sh57_1, Sh57_2}]\label{essay:shirshov}\index{Теорема!Ширшова о высоте}
Пусть $A=\langle X\rangle$ --- конечно-порожд\"{е}нная ассоциативная алгебра над коммутативным кольцом, удовлетворяющая допустимому полиномиальному тождеству степени $n$. Тогда найд\"{е}тся число $H$, зависящее только от $|X|$ и $n$ такое, что каждый элемент $a\in A$ может быть представлен как линейная комбинация слов вида $$v_1^{n_1}\cdots v_h^{n_h},$$ где $h\leqslant H$, а длина каждого слова $v_i$ меньше $n$.
\end{theorem}

Если в ассоциативной алгебре есть допустимое полиномиальное тождество, то есть и допустимое полилинейное тождество той же или меньшей степени. Доказательство этого факта можно найти в монографии К.~А.~Жевлакова, А.~М.~Слинько, И.~П.~Шестакова и А.~И.~Ширшова~\cite{SGS78}.

Пусть $f=0$ --- допустимое полилинейное тождество степени $n$. Тогда каждый моном $f$ является произведением переменных в некотором порядке (каждая переменная встречается в точности один раз в каждом мономе). Таким образом, все мономы $f$ получаются из $x_1 x_2\cdots x_n$ пут\"{е}м перестановки переменных. Следовательно, каждое допустимое полилинейное тождество имеет форму

$$\sum\limits_{\sigma\in S_n}\alpha_\sigma x_{\sigma(1)} x_{\sigma(2)}\cdots x_{\sigma(n)},$$

где $S_n$ --- группа всех перестановок множества $\{1,\dots , n\}$, а один из коэффициентов $\alpha_\sigma$ равен 1.

Это тождество после переименования переменных может для некоторых коэффициентов $\beta_\sigma$ быть представлено в форме

$$x_1 x_2\cdots x_n = \sum\limits_{\sigma\in S_n\setminus\{1\}}\beta_\sigma x_{\sigma(1)} x_{\sigma(2)}\cdots x_{\sigma(n)}$$

Таким образом, каждое произведение $u_1 u_2\cdots u_n$ элементов алгебры $A$ есть линейная комбинация перестановок этого произведения. Пусть на словах из $X^*$ задан некоторый частичный порядок. Рассмотрим подмножество, получающихся из $X^*$ выбрасыванием слов, представимых в виде линейной комбинации меньших слов. Получаем, что теорему \ref{essay:shirshov} можно доказывать только для этого подмножества. Таким образом, доказательство теоремы \ref{essay:shirshov} сводится к чистой комбинаторике.

\medskip
Заметим, что верно и утверждение, обратное теоремам \ref{essay:kap} и \ref{essay:shirshov}.

\begin{notation}
Выражением $$S_n(x_1,\dots, x_{n})$$ называется выражение $$S_n(x_1,\dots, x_{n}) := \sum\limits_{\sigma\in S_n}(-1)^\sigma  x_{\sigma(1)} x_{\sigma(2)}\cdots x_{\sigma(n)},$$ где $S_n$ --- группа перестановок от $n$ элементов.
\end{notation}
Известно, что в каждой ассоциативной алгебре размерности $n$ над коммутативным кольцом выполняется так называемое {\em стандартное тождество} \index{Тождество!стандартное} $$S_{n+1}(x_1,\dots, x_{n+1})=0.$$

Отсюда получаем следующее предложение:

\begin{proposition}
Любая конечномерная алгебра является конечно-порожд\"{е}нной, алгебраичной и удовлетворяет допустимому полилинейному тождеству.
\end{proposition}

\begin{definition} \index{Размерность!Гельфанда--Кириллова}
$\GK(A)$ --- {\em размерность Гельфанда--Кириллова алгебры $A$} --- определяется по правилу
$$\GK(A)=\lim_{n\to\infty}{\ln V_A(n)\over\ln(n)},$$
где $V_A(n)$ есть\index{Функция роста алгебры} {\it функция роста алгебры $A$}, т.е.
размерность векторного пространства, порожд\"{е}нного словами степени
не выше $n$ от образующих $A$.
\end{definition}

\begin{corollary}
Пусть $A$~--- конечно-порожд\"{е}нная $\PI$-алгебра.
Тогда $\GK(A)<\infty.$
\end{corollary}


\begin{notation}
Обозначим через $\deg(A)$ {\em степень алгебры}, т.е.
минимальную степень тождества, которое в ней выполняется.
Через $\Pid(A)$ обозначим {\em сложность} алгебры $A$, т.е. максимальное $k$
такое, что ${\Bbb M}_k$ --- алгебра матриц размера $k\times k$ ---  принадлежит
многообразию $\Var(A)$, порожд\"{е}нному алгеброй $A$.
\end{notation}

Введ\"{е}м понятие высоты, частный случай которого использовался в теореме \ref{essay:shirshov}.

\begin{definition} \index{Высота!множества}
Назов\"{е}м множество $\cM\subset X^*$ множеством  {\em ограниченной высоты
$h=\Ht_Y(A)$ над множеством слов $Y= \{ u_1, u_2,\ldots\}$}, если $h$ --- минимальное число такое, что любое слово $u\in \cM$ либо $n$-разбиваемо, либо представимо в виде $$u =  u_{j_1}^{k_1} u_{j_2}^{k_2}\cdots u_{j_r}^{k_r}\text{, где }r\leqslant h.$$
\end{definition}

\begin{definition} \index{Высота!алгебры}
Назов\"{е}м \PI-алгебру $A$ алгеброй {\em ограниченной высоты
$h=\Ht_Y(A)$ над множеством слов $Y = \{ u_1, u_2,\ldots\}$}, если
$h$ --- минимальное число такое, что любое слово $x$ из $A$ можно
представить в виде
$$x = \sum_i \alpha_i u_{j_{(i,1)}}^{k_{(i,1)}}
u_{j_{(i, 2)}}^{k_{(i,2)}}\cdots
u_{j_{(i,r_i)}}^{k_{(i,r_i)}},
$$
причем $\{r_i\}$ не превосходят $h$. Множество \index{Базис!Ширшова}
$Y$ называется {\em базисом Ширшова} или {\em $s$-базисом} для алгебры $A$.
\end{definition}

Вместо понятия {\it высоты} иногда удобнее пользоваться близким понятием
{\it существенной высоты}.

\begin{definition} \index{Высота!существенная}
Алгебра $A$ имеет {\em существенную высоту $h=\HtEss(A)$} над
конечным множеством $Y$, называемым {\em $s$-базисом алгебры $A$}, если можно
выбрать такое конечное множество $D\subset A$, что $A$ линейно
представима элементами вида $t_1\cdot\cdots\cdot t_l$, где $l\leqslant
2h+1$, и $\forall i (t_i\!\in\! D \vee t_i=y_i^{k_i};y_i\in Y)$,
причем множество таких $i$, что $t_i\not\in D$, содержит не более
$h$ элементов. Аналогично определяется {\em существенная высота}
множества слов.
\end{definition}

Говоря неформально, любое длинное слово есть произведение
периодических частей и ``прокладок'' ограниченной длины.
Существенная высота есть число таких периодических кусков, а
обычная еще учитывает ``прокладки''.

В связи с теоремой о высоте возникли следующие вопросы:

\begin{enumerate}

\item На какие классы колец можно распространить теорему о высоте?

\item Над какими $Y$ алгебра $A$ имеет ограниченную высоту? В
частности, какие наборы слов можно взять в качестве $\{v_i\}$?

\item Как устроен вектор степеней $(k_1,\ldots,k_h)$? Прежде
всего: какие множества компонент этого вектора являются
существенными, т.е. какие наборы $k_i$ могут быть одновременно
неограниченными? Какова существенная высота? Верно ли, что
множество векторов степеней обладает теми или иными свойствами
регулярности?

\item Как оценить высоту?
\end{enumerate}

Перейдем к обсуждению поставленных вопросов.

\section{Неассоциативные обобщения}

 Теорема о высоте была
распространена на некоторые классы колец, близких к ассоциативным.
С.~В.~Пчелинцев \cite{Pchelintcev} доказал ее для альтернативного
и $(-1,1)$ случаев, С.~П.~Мищенко \cite{Mishenko1} получил аналог
теоремы о высоте для алгебр Ли с разреженным тождеством. В работе
А.~Я.~Белова \cite{Belov1} теорема о высоте была доказана для некоторого
класса колец, асимптотически близких к ассоциативным, куда
входят, в частности, альтернативные и йордановы $\PI$-алгебры.

\section{Базисы Ширшова} \index{Базис!Ширшова}

\begin{theorem}  [А.~Я.~Белов, \cite{BBL97}]         \label{ThKurHmg}
а) Пусть $A$~--- градуированная\index{Алгебра!градуированная} $\PI$-алгебра, $Y$~--- конечное
множество однородных элементов. Тогда если при всех $n$ алгебра
$A/Y^{(n)}$ нильпотентна, то $Y$ есть $s$-базис $A$. Если при этом
$Y$ порождает $A$ как алгебру, то $Y$~--- базис Ширшова алгебры
$A$.

б) Пусть $A$~--- $\PI$-алгебра, $M\subseteq A$~--- некоторое курошево
подмножество в $A$. Тогда $M$~--- $s$-базис алгебры $A$.
\end{theorem}

$Y^{(n)}$ обозначает идеал, порожд\"{е}нный $n$-ми степенями элементов
из $Y$. Множество $M\subset A$ называется {\it курошевым}, если
любая проекция $\pi\colon A\otimes K[X]\to A'$, в которой образ
$\pi(M)$ цел над $\pi(K[X])$, конечномерна над $\pi(K[X])$.
Мотивировкой этого понятия служит следующий пример.  Пусть
$A={\Bbb Q}[x,1/x]$. Любая проекция $\pi$ такая, что $\pi(x)$
алгебраичен, имеет конечномерный образ. Однако множество $\{x\}$
не является $s$-ба\-зис\-ом алгебры ${\Bbb Q}[x,1/x]$. Таким
образом, ограниченность существенной высоты есть некоммутативное
обобщение свойства {\it целости}.

Описание базисов\index{Базис!Ширшова}
Ширшова, состоящих из слов, заключено в следующей теореме:

\begin{theorem}[{[\citen{BBL97, BR05}]}]            \label{ThBelheight}
Множество слов $Y$ является базисом Ширшова алгебры $A$ тогда и
только тогда, когда для любого слова $u$ длины не выше $m =
\Pid(A)$~--- сложности алгебры $A$~--- множество $Y$ содержит
слово, циклически сопряженное к некоторой степени слова $u$.
\end{theorem}

Аналогичный результат был независимо получен Г.~П.~Чекану и В.~Дренски. Вопросы, связанные с локальной конечностью алгебр, с алгебраическими множествами слов степени не выше сложности алгебры, исследовались в работах \cite{Ufn90, Cio97, Cio88, CK93, Ufn85, UC85}. В этих же работах обсуждались вопросы, связанные с обобщением теоремы о независимости.

\section{Существенная высота} \index{Высота!существенная}
Ясно, что размерность
Гель\-фан\-да--Ки\-рил\-ло\-ва оценивается существенной высотой и
что $s$-базис является базисом Ширшова тогда и только тогда,
когда он порождает $A$ как алгебру. В представимом случае имеет
место и обратное утверждение.

\begin{theorem}[А.~Я.~Белов, \cite{BBL97}]
Пусть $A$~--- конечно-порожд\"{е}нная представимая алгебра и пусть
$$H_{Ess}{}_Y(A)<\infty.$$ Тогда $$\HtEss_Y(A)=\GK(A).$$
\end{theorem}

\begin{corollary}[В.~Т.~Марков]
Размерность Гель\-фан\-да--Ки\-рил\-ло\-ва ко\-неч\-но
по\-рож\-ден\-ной представимой алгебры есть целое число.
\end{corollary}

\begin{corollary}
Если $$\HtEss_Y(A)<\infty$$ и алгебра $A$ представима, то
$\HtEss_Y(A)$ не зависит от выбора $s$-базиса $Y$.
\end{corollary}

В этом случае размерность Гель\-фан\-да--Ки\-рил\-ло\-ва также
равна существенной высоте в силу локальной представимости относительно свободных алгебр.

\section{Строение векторов степеней} Хотя в представимом случае
размерность Гель\-фан\-да--Ки\-рил\-ло\-ва и существенная высота
ведут себя хорошо, тем не менее даже тогда множество векторов
степеней может быть устроено плохо~--- а именно, может быть
дополнением к множеству решений системы
экс\-по\-нен\-ци\-аль\-но-по\-ли\-но\-ми\-аль\-ных диофантовых
уравнений \cite{BBL97}. Вот почему существует пример представимой
алгебры с трансцендентным рядом Гильберта. Однако для относительно
свободной алгебры ряд Гильберта рационален \cite{Belov501}.

\section{$n$-разбиваемость, обструкции и теорема Дилуорса} \index{Слово!$n$-разбиваемое}\index{Теорема!Дилуорса}

Значение понятия {\it
$n$-раз\-би\-ва\-ем\-ос\-ти} выходит за рамки проблематики,
относящейся к проблемам бернсайдовского типа. Оно играет роль и
при изучении полилинейных слов, в оценке их количества, где {\it
полилинейным} называется слово, в которое каждая буква входит
не более одного раз. В.~Н.~Латышев (см.~\cite{Lat72}) применил теорему Дилуорса для
получения оценки числа не являющихся $m$-разбиваемыми полилинейных слов степени $n$ над
алфавитом $\{a_1,\dots,a_n\}$. Эта
оценка:  ${(m - 1)}^{2n}$ и она близка к реальности. Напомним эту
теорему.

\begin{theorem}[Р.~Дилуорс, \cite{Dil50}]
Пусть $n$ --- наибольшее количество
элементов антицепи данного конечного частично упорядоченного
множества $M$. Тогда $M$ можно разбить на $n$  попарно
непересекающихся цепей.
\end{theorem}
Рассмотрим полилинейное слово $W$ из $n$ букв. \index{Слово!полилинейное}
Положим $a_i\succ
a_j$, если $i>j$ и буква $a_i$ стоит в слове $W$ правее $a_j$.
Условие не $m$-разбиваемости означает отсутствие антицепи из $m$
элементов. Тогда по теореме Дилуорса все позиции (и,
соответственно, буквы $a_i$) разбиваются на $(m-1)$ цепь. Сопоставим
каждой цепи свой цвет. Тогда раскраска позиций и раскраска букв
однозначно определяет слово $W$. А число таких раскрасок не
превосходит $$(m-1)^n\times (m-1)^n=(m-1)^{2n}.$$ Улучшение этой оценки и другие вопросы, связанные с полилинейными словами, рассматриваются в главе \ref{ch:pu_number}.

В.~Н.~Латышев ([\citen{Lat72}]) с помощью привед\"{е}нных оценок пров\"{е}л
прозрачное доказательство теоремы Регева:
\begin{theorem}[А.~Регев,\cite{Reg71}] \index{Теорема!Регева}
Если алгебры $A$ и $B$ удовлетворяют полиномиальному тождеству, то алгебра  $A\otimes_F B$ также удовлетворяет полиномиальному тождеству.
\end{theorem}

Вопросы, связанные с перечислением полилинейных слов, не
являющихся $n$-раз\-би\-ва\-е\-мы\-ми, имеют самостоятельный
интерес. К примеру, количество полилинейных слов длины $l$ над $l$-буквенным алфавитом, не являющихся $3$-разбиваемыми, равно $k$-му числу Каталана. 

В 1950 году В.~Шпехт в работе \cite{Sp50} поставил проблему\index{Проблема!Шпехта} существования бесконечно базируемого многообразия ассоциативных алгебр над полем характеристики 0. Решение проблемы Шпехта для нематричного случая представлено в докторской диссертации В.~Н.~Латышева~\cite{Lat77}. Рассуждения В.~Н.~Латышева основывались на применении техники частично упорядоченных множеств. А.~Р.~Кемер~\cite{Kem87} доказал, что каждое многообразие ассоциативных алгебр конечно базируемо, тем самым решив проблему Шпехта.

Первые примеры бесконечно базируемых ассоциативных колец были получены А.~Я.~Беловым~\cite{Bel99}, А.~В.~Гришиным~\cite{Gr99} и В.~В.~Щиголевым~\cite{Shch99}.

Введ\"{е}м теперь некоторый порядок на словах алгебры над полем. Назов\"{е}м {\it обструкцией} полилинейное слово, которое
\begin{itemize}
    \item является уменьшаемым (т. е. является комбинацией меньших слов);
    \item не имеет уменьшаемых подслов;
    \item не является изотонным образом уменьшаемого слова меньшей длины.
\end{itemize}

В.~Н.~Латышев \cite{LatyshevMulty} поставил\index{Проблема!Латышева}
проблему конечной базируемости множества старших полилинейных слов
для $T$-идеала относительно взятия надслов и изотонных
подстановок.

\begin{ques}[Латышев]
Верно ли, что количество обструкций для полилинейного $T$-идеала конечно?
\end{ques}

Из проблемы Латышева вытекает полилинейный случай проблемы конечной базируемости для алгебр над полем конечной характеристики. Наиболее важной обструкцией является обструкция $x_n x_{ n-1}\cdots x_1$, е\"{е} изотонные образы составляют множество слов, не являющихся $n$-разбиваемыми.

В связи с этими вопросами возникает проблема:

\begin{ques}
Перечислить количество полилинейных слов, отвечающих данному конечному набору обструкций. Доказать элементарность соответствующей производящей функции.
\end{ques}
Также имеется тесная связь с проблемой слабой
н\"{е}теровости групповой алгебры бесконечной финитарной
симметрической группы над полем положительной характеристики (для
нулевой характеристики это было установлено А.~Е.~Залесским \cite{Zal90, Zal92}). Для
решения проблемы В.~Н.~Латышева возникает необходимость переводить свойства
$T$-идеалов на язык полилинейных слов. В работах
\cite{BBL97,Belov1} была предпринята попытка осуществить программу перевода
структурных свойств алгебр на язык комбинаторики слов. На язык
полилинейных слов такой перевод осуществить проще, в дальнейшем
можно получить информацию и о словах общего вида.

Автор переносит технику В.~Н.~Латышева на
неполилинейный случай, что позволяет получить субэкспоненциальную
оценку в теореме Ширшова о высоте. Г.~Р.~Челноков предложил идею этого переноса в 1996 году.

\section{Оценки высоты и степени нильпотентности}

Первоначальное доказательство А.~И.~Ширшова
хотя и было чисто комбинаторным (оно основывалось на технике
элиминации, развитой им в алгебрах Ли, в частности, в
доказательстве теоремы о свободе), однако оно давало только
упрощ\"{е}нные рекурсивные оценки. Позднее А.~Т.~Колотов
\cite{Kolotov} получил оценку $\Ht(A)\leqslant l^{l^n}$\
(Здесь и далее: $n=\deg(A)$,\, $l$~--- число образующих). А.~Я.~Белов в работе \cite{Bel92} показал, что $$\Ht(n,l)<2nl^{n+1}.$$  Экспоненциальная оценка теоремы Ширшова о высоте изложена также в работах \cite{BR05,Dr00,Kh11(2)}. Данные оценки улучшались в работах А.~Клейна \cite{Klein,Klein1}. 

В работе \cite{LS13} рассматривается связь между высотой конечных и бесконечных полей одинаковой характеристики. Пусть $A$ --- ассоциативная алгебра над конечным полем $\FF$ из $q$ элементов и тождеством степени $n$. Тогда доказано, что если $q\geqslant n$, то индекс нильпотентности алгебры будем таким же, что и в случае бесконечного поля. Если же $q=n-1$, то индексы нильпотентности в случае поля $\FF$ и бесконечного поля отличается не более, чем на~1.

В 2011 году А.~А.~Лопатин \cite{Lop11} получил следующий результат:

\begin{theorem}
Пусть $C_{n,l}$ --- степень нильпотентности свободной $l$-порожд\"{е}нной алгебры и удовлетворяющей тождеству $x^n=0.$ Пусть $p$ --- характеристика базового поля алгебры --- больше чем $n/2.$ Тогда
$$(1): C_{n,l}<4\cdot 2^{n/2} l.$$
\end{theorem}
По определению $C_{n,l}\leqslant \Psi(n, n, l).$

Заметим, что для малых $n$ оценка  (1) меньше, чем полученная в диссертации оценка $\Psi(n, n, l),$ но при росте $n$ оценка $\Psi(n,n,l)$ асимптотически лучше оценки (1).

Е.~И.~Зельманов поставил следующий вопрос в Днестровской тетради \cite{Dnestrovsk} в 1993 году:
\begin{ques}
Пусть $F_{2,m}$ --- свободное $2$-порожд\"{е}нное ассоциативное кольцо с тождеством $x^m=0.$ Верно ли, что класс нильпотентности кольца $F_{2,m}$ раст\"{е}т экспоненциально по $m?$
\end{ques}

В диссертации получен следующий ответ на вопрос Е.~И.~Зельманова: в действительности искомый класс нильпотентности раст\"{е}т субэкспоненциально.

\section{О нижних оценках}

Сравним полученные результаты  с нижней
оценкой для высоты. Высота алгебры $A$ не меньше ее размерности
Гель\-фан\-да--Ки\-рил\-ло\-ва $\GK(A)$. Для алгебры
$l$-порожд\"{е}нных общих матриц порядка $n$ данная размерность, равна
$(l-1)n^2+1$ ([\citen{Bel04, Procesi}]).

В то же время Ш.~Амицур и Ж.~Левицкий в 1950 году доказали следующий факт:

\begin{theorem}[Амицур--Левицкий, \cite{AL50}] \index{Тождество!стандартное}\index{Теорема!Амицура--Левицкого}
Пусть $S_n$ --- группа перестановок. Определим {\em стандартное тождество степени $n$} $$S_n(x_1,\dots, x_{n}) = 0$$ следующим образом: $$S_n(x_1,\dots, x_{n}) := \sum\limits_{\sigma\in S_n}(-1)^\sigma  x_{\sigma(1)} x_{\sigma(2)} x_{\sigma(n)}.$$
Тогда для любого натурального числа $d$ и любого коммутативного кольца $\FF$ в матричной алгебре $\Mat_d(\FF)$ выполняется стандартное тождество степени $2d$.
\end{theorem}

Минимальная степень тождества алгебры
$l$-порожд\"{е}нных общих матриц порядка $n$ равна $2n$ согласно теореме Ами\-цу\-ра--Ле\-виц\-ко\-го. Имеет место следующее:

\begin{proposition}
Высота $l$-порожд\"{е}нной $\PI$-алгебры степени $n$, а также
множества  не $n$-раз\-би\-ва\-ем\-ых слов над $l$-бук\-вен\-ным
алфавитом, не менее, чем $$(l-1)n^2/4+1.$$
\end{proposition}

Другие примеры использования теоремы Ами\-цу\-ра--Ле\-виц\-ко\-го можно найти в работе \cite{FI13}

Нижние оценки на индекс нильпотентности были установлены
Е.~Н.~Кузьминым в работе \cite{Kuz75}.  Е.~Н.~Кузьмин привел
пример $2$-порожд\"{е}нной алгебры с тождеством $x^n=0$, индекс
нильпотентности которой строго больше $$(n^2+n-2)\over2.$$ Результат Е.~Н.~Кузьмина изложен также в моногографиях \cite{BR05, Dr04}. Вопрос нахождения нижних оценок рассматривается в главе \ref{ch:pu_number} (см. также \cite{Kh11}).

В то же время для случая нулевой характеристики и счетного числа
образующих Ю.~П.~Размыслов \cite{Razmyslov3}
получил верхнюю оценку на индекс нильпотентности, равную $n^2$. Автор получил субэкспоненциальные оценки на индекс нильпотентности для произвольной характеристики (см. теорему \ref{th_nil}).

\newpage

\chapter{Оценки индекса нильпотентности конечно-порожд\" {е}нных алгебр с ниль-тождеством}\label{ch:nil}
\section{Оценки на появление степеней подслов}
\subsection{План доказательства субэкспоненциальности индекса нильпотентности}

В леммах \ref{Lm0.1}, \ref{c:lem1.2} и \ref{c:lem1.3} описываются достаточные условия для присутствия периода длины $d$ в не являющемся $n$-разбиваемым слове $W$. В лемме \ref{c:lem1.4} связываются понятия $n$-разбиваемости слова $W$ и множества его хвостов. После этого определ\"{е}нным образом выбирается подмножество множества хвостов слова $W$, для которого можно применить теорему Дилуорса. Затем мы раскрашиваем хвосты и их первые буквы в соответствии с принадлежностью к цепям, полученным при применении теоремы Дилуорса.

Необходимо изучить, в какой позиции начинают отличаться соседние хвосты в каждой цепи. Вызывает интерес, с какой $``$частотой$"$ эта позиция попадает в $p$-хвост для некоторого $p\leqslant n$. Потом мы несколько обобщаем наши рассуждения, деля хвосты на сегменты по несколько букв, а затем рассматривая, в какой сегмент попадает позиция, в которой начинают отличаться друг от друга соседние хвосты в цепи.
В лемме \ref{c:lem2} связываются рассматриваемые $``$частоты$"$ для $p$-хвостов и $kp$-хвостов для $k = 3$.

В завершение доказательства строится иерархическая структура на основе применения леммы \ref{c:lem2}, т.~е. рассматриваем сначала сегменты $n$-хвостов, потом подсегменты этих сегментов и т.~д. Далее рассматривается наибольшее возможное количество хвостов из подмножества, для которого была применена теорема Дилуорса, после чего оценивается сверху общее количество хвостов, а, значит, и букв слова $W$.

\subsection{Свойства периодичности и $n$-разбиваемости}\label{s:nil:basic_properties}

\smallskip


Пусть $\{a_1, a_2,\ldots ,a_l\}$ --- алфавит, над которым проводится построение слов. Порядок $a_1\prec a_2\prec\cdots\prec a_l$ индуцирует
лексикографический порядок на словах над заданным алфавитом. Для удобства введ\"{е}м следующие определения:

\begin{definition}
а) Если в слове $v$ содержится подслово вида $u^t,$ то будем
говорить, что в слове $v$ содержится период длины $t.$

б) Если слово $u$ является началом слова $v$, то такие слова \index{Слова!несравнимые}
называют {\em несравнимыми} и обозначают $u\approx v$.

в) Слово $v$ --- {\em хвост} слова $u$, если найдется слово $w$ \index{Слова!хвост}
такое, что $u=wv$.

г) Слово $v$ --- {\em $k$-хвост} слова $u$, если $v$ состоит из \index{Слова!$k$-хвост}
$k$ первых букв некоторого хвоста $u$. Если в хвосте $u$ меньше $k$ букв, то считаем $v=u$.

г $'$) {\em $k$-начало}~---~ это то же самое, что и $k$-хвост. \index{Слова!$k$-начало}

д) Пусть слово $u$ {\em левее} слова $v$, если начало слова $u$ левее начала слова $v$.
\end{definition}

\begin{notation}
а) Для вещественного числа $x$ положим $\ulcorner x\urcorner := -[-x].$

б) Обозначим как $|u|$ длину слова $u$.
\end{notation}

Для доказательства потребуются следующие достаточные условия наличия периода:

\begin{lemma}       \label{Lm0.1}
В слове $W$ длины $x$ либо первые $[x/d]$ хвостов попарно
сравнимы, либо в слове $W$ найдется период длины $d$.
\end{lemma}

\begin{proof}  Пусть в слове $W$ не нашлось слова вида $u^{d}$. Рассмотрим первые
$[x/{d}]$ хвостов. Предположим, что среди них нашлись 2
несравнимых хвоста $v_1$ и $v_2$. Пусть $v_1=u\cdot v_2$. Тогда
$v_2=u\cdot v_3$ для некоторого $v_3$. Тогда $v_1=u^2\cdot v_3$.
Применяя такие рассуждения, получим, что $v_1=u^{d}\cdot
v_{{d}+1}$, так как $$|u|<x/ {d}, |v_2|\geqslant ({d}-1)x/ {d}.$$
Противоречие.\end{proof}

\begin{lemma}     \label{c:lem1.2}
Если в слове $V$ длины $k\cdot t$ не больше $k$ различных подслов
длины $k$, то $V$ включает в себя период длины $t$.
\end{lemma}

\begin{proof} Докажем лемму индукцией по $k$. База при $k = 1$ очевидна. Если
находится не больше, чем $(k - 1)$ различных подслов длины $(k -
1)$, то применяем индукционное предположение. Если существуют $k$
различных подслов длины $(k - 1)$, то каждое подслово длины $k$
однозначно определяется своими первыми $(k - 1)$ буквами. Значит,
$V=v^t$, где $v$ --- $k$-хвост $V$.\end{proof}

\begin{definition} \index{Слово!$n$-разбиваемое!в обычном смысле}
а) Слово $W$ --- \textit{$n$-разбиваемо в обычном смысле}, если
найдутся $u_1, u_2,\ldots,u_n$ такие, что $$u_1\succ\cdots \succ u_n\text{ и }W=v\cdot u_1\cdots
u_n.$$
Слова $u_1,
u_2,\dots, u_n$ назов\"{е}м {\em $n$-разбиением} слова $W$.

б) В текущем доказательстве слово $W$ будем называть \index{Слово!$n$-разбиваемое!в хвостовом смысле}
\textit{$n$-разбиваемым в хвостовом смысле}, если найдутся хвосты
$u_1,\ldots,u_n$ такие, что $$u_1\succ u_2\succ \cdots \succ u_n$$ и
для любого $i=1, 2,\ldots, n - 1$ начало $u_i$ слева от начала
$u_{i+1}$. Хвосты $u_1,
u_2,\dots, u_n$ назов\"{е}м {\em $n$-разбиением в хвостовом смысле} слова $W$. В части \ref{ch:nil}, если не оговорено противное, то под
\textit{$n$-разбиваемыми} словами мы подразумеваем $n$-разбиваемые \index{Слово!$n$-разбиваемое!в хвостовом смысле}
в хвостовом смысле.

в) Слово $W$ называется {\em $(n,d)$-сократимым}, если оно либо \index{Слово!$(n,d)$-сократимое}
$n$-разбиваемо в обычном смысле, либо существует слово $u$ такое, что $u^{d}\subseteq W$.
\end{definition}

Теперь опишем достаточное условие $(n,d)$-сократимости и его связь с $n$-разбиваемостью.

\begin{lemma}          \label{c:lem1.3}
Если в слове $W$ найдутся $n$ одинаковых непересекающихся подслов
$u$ длины $n\cdot{d}$, то $W$ --- $(n,d)$-сократимое.
\end{lemma}

\begin{proof} Предположим противное. Рассмотрим хвосты
$u_1,u_2,\ldots,u_n$ слова $u$, которые начинаются с каждой из его
первых $n$ букв. Перенумеруем хвосты так, чтобы выполнялись
неравенства: $u_1~\succ\cdots\succ~u_n.$ Из леммы \ref{Lm0.1} они
несравнимые. Рассмотрим подслово $u_1,$ лежащее в самом левом экземпляре слова $u,$ подслово $u_2$ --- во втором слева, $\dots$, $u_n$ --- в $n$-ом слева. Получили
$n$-разбиение слова $W$. Противоречие.\end{proof}

\begin{proposition}\label{pr:0}
Если для некоторых слов $u, v, w$ верно соотношение $|u|\leqslant |v| < |w|$ и, кроме того, $u\approx w, v\approx w$, то слова $u$ и $v$ несравнимы.
\end{proposition}

\begin{notation}
Если для некоторого действительного числа $a$ мы говорим про $a$-разбиваемость, то имеется ввиду $[a]$-разбиваемость.
\end{notation}

\begin{notation}\label{not:pnd}
$p_{n,d}:=\left[{3\over 2}(n + 1)d(\log_3{(nd)} + 2)\right].$
\end{notation}

\begin{lemma}      \label{c:lem1.4}
Если слово $W$ является $p_{n,d}$-разбиваемым, то оно --
$(n,d)$-сократимое.
\end{lemma}

\begin{proof}
От противного. Пусть слово $W$ является $p_{n,d}$-разбиваемым, хвосты $$u_1\prec u_2\prec\cdots\prec u_{p_{n,d}}$$ образуют $p_{n,d}-$разбиение, но слово $W$ --- не $(n,d)$-сократимое.

\begin{notation}
Пусть для $1\leqslant i < p_{n,d}$ слово $v_i$ --- подслово слова $W$, которое начинается первой буквой хвоста $u_i$ и заканчивается буквой, стоящей на одну позицию левее первой буквы хвоста $u_{i + 1}$. Также считаем, что $v_{p_{n,d}} = u_{p_{n,d}}$.
\end{notation}

 Предположим, что для любого числа $i$ такого, что $$1\leqslant i\leqslant \left[{3\over 2}(n - 1)d(\log_3{(nd)} + 2)\right],$$ найдутся числа $$0\leqslant j_i\leqslant k_i < m_i\leqslant q_i < [{3}d(\log_3{(nd)} + 2)]$$ такие, что $$\prod\limits_{s = j_i+i}^{k_i+i} v_s\prec \prod\limits_{s = m_i+i}^{q_i+i} v_s.$$ Тогда рассмотрим последовательность чисел $$i_s = {2s\over n}p_{n,d}+1\text{, где }0\leqslant s\leqslant \left[{n-1\over 2}\right].$$ Тогда последовательность слов
$$\prod\limits_{s = j_{i_0}+i_0}^{k_{i_0}+i_0} v_s\prec\prod\limits_{s = m_{i_0}+i_0}^{q_{i_0}+i_0} v_s\prec \prod\limits_{s = j_{i_1}+i_1}^{k_{i_1}+i_1} v_s\prec\prod\limits_{s = m_{i_1}+i_1}^{q_{i_1}+i_1} v_s\prec$$$$ \prod\limits_{s = j_{i_2}+i_2}^{k_{i_2}+i_2} v_s\prec\cdots\prec
\prod\limits_{s = j_{i_{\left[{n-1\over 2}\right]}}+i_{\left[{n-1\over 2}\right]}}^{k_{i_{\left[{n-1\over 2}\right]}}+i_{\left[{n-1\over 2}\right]}} v_s\prec\prod\limits_{s = m_{i_{\left[{n-1\over 2}\right]}}+i_{\left[{n-1\over 2}\right]}}^{q_{i_{\left[{n-1\over 2}\right]}}+i_{\left[{n-1\over 2}\right]}} v_s$$ образует $2{\left[{n+1\over 2}\right]}-$разбиение в обычном смысле слова $W$.

Значит, слово $W$ --- $(n,d)$-сократимо. Противоречие.

Следовательно, найд\"{е}тся такое число $$1\leqslant i\leqslant \left[{3\over 2}(n - 1)d(\log_3{(nd)} + 2)\right],$$ что для любых $$0\leqslant j\leqslant k< m\leqslant q < [{3}d(\log_3{(nd)} + 2)]$$ имеем $$\prod\limits_{s = j+i}^{k+i} v_s\approx\prod\limits_{s = m+i}^{q+i} v_s.$$

Без ограничения общности $i = 1$. Для некоторого натурального $t$ рассмотрим некоторую последовательность натуральных чисел $\{k_i\}_{i=1}^{t}$ такую, что $k_1=3$ и $\prod\limits_{i=2}^{t}k_i>nd$. В силу леммы \ref{c:lem1.3} $$\inf\limits_{0<j\leqslant k_1 d}|v_j|\leqslant nd.$$ Пусть $\inf\limits_{0<j\leqslant k_1 d}|v_j|$ достигается на $v_{j_1}$, где $0<j_1\leqslant k_1 d$ (если таких минимумов несколько, то бер\"{е}м самый правый из них).
\begin{itemize}
    \item Если $j_1\leqslant d$, то $d|v_{j_1}|-$начала хвостов $u_{j_1}$ и $u_{j_1 + 1}$ не пересекаются и несравнимы со словом $\prod\limits_{j=2d+1}^{3d}v_j$ и меньше его по длине. Следовательно, по предложению \ref{pr:0} $d|v_j|-$начала хвостов $u_{j_1}$  и $u_{j_1+1}$ несравнимы, а, значит, $v_{j_1}^d$ --- подслово слова $W$.
    \item Если $d<j_1\leqslant 2d,$ то $d|v_j|-$начала хвостов $u_{j_1}$  и $u_{j_1+1}$ не пересекаются со словом $v_{3d+1}$ и несравнимы со словом $\prod\limits_{j=1}^{d}v_j$. Кроме того, эти $d|v_j|-$начала меньше его по длине. Значит, по предположению \ref{pr:0}, $v_{j_1}^d$ --- подслово слова $W$.
    \item Если $2d<j_1\leqslant 3d$ и $d|v_j|-$начало хвоста $u_{j_1+1}$ не пересекается с $v_{[{3}d(\log_3{(nd)} + 2)]}$, то, аналогично предыдущему случаю, $v_{j_1}^d$ --- подслово слова $W$. Пусть $d|v_j|-$начало хвоста $u_{j_1+1}$ пересекается с $v_{[{3}d(\log_3{(nd)} + 2)]}$. Тогда для некоторого натурального числа $t$, которое будет выбрано позднее, рассмотрим последовательность натуральных чисел $\{k_j\}_{j=2}^t$, для которой $$\prod\limits_{j=2}^tk_j\geqslant nd.$$ Выберем $t$ так, что $$\sum\limits_{j=2}^{t}k_j\leqslant (\log_3{(nd)} + 1).$$ Можно показать, что такое $t$ всегда существует. Заметим, что слово $\prod\limits_{j=k_1 d + 1}^{(k_1 + k_2)d}v_j$ содержится в $d|v_{j_1}|-$начале хвоста $u_{j_1} + 1$.
        Рассмотрим $v_{j_2}$ --- слово наименьшей длины среди слов $v_j$ при $k_1 d < j\leqslant (k_1 + k_2)d$. Если таких слов несколько, то в качестве наименьшего возьм\"{е}м самое правое из них. Тогда $$|v_{j_2}|\leqslant \left[{|v_{j_1}|\over k_2}\right].$$ Теперь рассмотрим $d|v_{j_2}|-$начало хвоста $u_{j_2 + 1}$. Если оно не пересекается со словом $v_{[{3}d(\log_3{(nd)} + 2)]}$, то $v_{j_2}^d$ --- подслово слова $W$. В противном случае слово
$$\prod\limits_{j=(k_1 + k_2) d + 1}^{(k_1 + k_2 + k_3)d}v_j$$
является подсловом $d|v_{j_2}|-$начала хвоста $u_{j_2 + 1}$.

        Пусть для некоторого числа $i$ такого, что $2\leqslant i < t$ среди чисел $j$ таких, что $$d\sum\limits_{s=1}^{i-1}k_s<j\leqslant d\sum\limits_{s=1}^{i}k_s$$ найд\"{е}тся число $j_i$ такое, что
$$|v_{j_i}|\leqslant \left[{|v_{j_1}|\over\prod\limits_{s=2}^{i}k_s}\right].$$ Рассмотрим $d|v_{j_i}|-$начало хвоста $u_{j_i + 1}$. Если оно не пересекается со словом $v_{[{3}d(\log_3{(nd)} + 2)]}$, то $v_{j_i}^d$ --- подслово слова $W$. В противном случае слово $$\prod\limits_{j=d\sum\limits_{s=1}^{i}k_s + 1}^{d\sum\limits_{s=1}^{i+1}k_s}v_j$$ является подсловом $d|v_{j_i}|-$начала хвоста $u_{j_i + 1}$.
         Рассмотрим $v_{j_{i+1}}$ --- слово наименьшей длины среди слов $v_j$ при $$d\sum\limits_{s=1}^{i}k_s < j\leqslant d\sum\limits_{s=1}^{i+1}k_s.$$ Тогда
$$|v_{j_{i+1}}|\leqslant \left[{|v_{j_i}|\over k_{i+1}}\right]\leqslant \left[{|v_{j_1}|\over\prod\limits_{s=2}^{i+1}k_s}\right].$$
         Таким образом, $$|v_t|\leqslant \left[{|v_{j_1}|\over\prod\limits_{s=2}^{t}k_s}\right] < 1.$$ Получено противоречие, из которого и вытекает утверждение леммы.

\end{itemize}
\end{proof}





Пусть $W$ --- не $(n,d)$-сократимое слово. Рассмотрим $U$ --- $[\left|
W\right|/d]$-хвост слова $W$. Тогда $W$ --- не $(p_{n, d}+1)$-разбиваемое.
Пусть $\Omega$ --- множество хвостов слова $W$, которые начинаются \index{Слова!хвост}
в $U$. Тогда по лемме \ref{Lm0.1} любые два элемента из $\Omega$
сравнимы. Естественным образом строится биекция между $\Omega$,
буквами $U$ и натуральными числами от $1$ до
$\left|\Omega\right|=\left|U\right|$.

Введем слово $\theta$ такое, что $\theta$ лексикографически меньше
любого слова.

\begin{remark}
В текущем доказательстве теоремы \ref{c:main1} все хвосты мы
предполагаем лежащими в $\Omega$.
\end{remark}

\section{Оценки на появление периодических фрагментов}

\subsection{Применение теоремы Дилуорса} Для хвостов $u$ и $v$ положим \index{Теорема!Дилуорса}
$u<v$, если $u \prec v$ и, кроме того, $u$ левее $v$. Тогда по
теореме Дилуорса $\Omega$ можно разбить на $p_{n, d}$ цепей, где в
каждой цепи $u \prec v$, если $u$ левее $v$. Покрасим начальные
позиции хвостов в $p_{n, d}$ цветов в соответствии с принадлежностью к
цепям. Фиксируем натуральное число $p$. Каждому
натуральному числу $i$ от 1 до $\left|\Omega\right|$ сопоставим
$B^p(i)$ --- упорядоченный набор из $p_{n, d}$ слов $\{f(i,j)\},$
построенных по следующему правилу:

{\it Для каждого $j = 1, 2,\ldots, p_{n, d}$\ положим

$$f(i,j)=\left\{\max \
f\leqslant i: f\ \mbox{раскрашено\ в\ цвет}~j\right\}.$$

Если такого $f$ не найдется, то слово из $B^p(i)$ на позиции
$j$ считаем равным $\theta$, в противном случае это слово считаем
равным $p$-хвос\-ту, который начинается с $f(i,j)$-ой
буквы. }

Неформально говоря, мы наблюдаем, с какой скоростью хвосты ``эволюционируют'' в своих цепях, если рассматривать последовательность позиций слова $W$ как ``ось времени''.

\subsection{Наборы $B^p(i)$, процесс на позициях} \label{ss:set_bpi}

\begin{lemma}[О процессе]   \label{c:lem} \index{Лемма!о процессе}
Дана последовательность $S$ длины $|S|$, составленная из слов
длины $(k-1)$. Каждое из них состоит из $(k-2)$ символа $``0"$ и одной
$``1"$. Пусть $S$ удовлетворяет следующему условию:
{\em если для некоторого $0 < s \leqslant k-1$ найдутся $p_{n, d}$
слов, в которых $``1"$ стоит на $s$-ом месте, то между первым и
$p_{n, d}$-м из этих слов найдется слово, в котором $``1"$ стоит
строго меньше, чем на $s$-ом месте}; $L(k-1)=\sup\limits_S |S|$.

Тогда $$L(k-1)\leqslant p_{n, d}^{k-1}-1.$$
\end{lemma}

\begin{proof} $L(1) \leqslant p_{n, d}-1$. Пусть $L(k-1) \leqslant {p_{n, d}^{k-1}} -
1$. Покажем, что $L(k) \leqslant {p_{n, d}^{k}} - 1$.
Рассмотрим слова, у которых символ $``1"$ стоит на первом месте.
Их не больше $p_{n, d}-1$. Между любыми двумя из них, а также перед
первым и после последнего, количество слов не больше $L(k-1)
\leqslant {p_{n, d}^{k-1}} - 1$. Получаем, что
$$
L(k) \leqslant p_{n, d} - 1 +
\left(p_{n, d}\right)\left({\left(p_{n, d}\right)^{k-1}}-1\right) =
{\left(p_{n, d}\right)^{k}}-1$$
\end{proof}

Нам требуется ввести некоторую величину, которая бы численно оценивала скорость ``эволюции'' наборов $B^p(i)$:

\begin{definition}
Положим $$\psi(p):= \left\{\max \ k:
B^p(i)=B^p(i+k-1)\right\}.$$
\begin{proposition} \label{prop:pnd_psi}
По лемме \ref{c:lem1.2},
$\psi(p_{n,d})\leqslant p_{n, d} d$.
\end{proposition}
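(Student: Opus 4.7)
The plan is to argue by contradiction. Suppose $\psi(p_{n,d})>p_{n,d}d$, so there exists a label $i_0$ such that the vector $B^{p_{n,d}}$ remains constant on the whole segment $[i_0,i_0+p_{n,d}d]$. My first step is to unpack each elementary update. Passing from $B^{p_{n,d}}(s-1)$ to $B^{p_{n,d}}(s)$ can alter only the coordinate indexed by the class $c_s$ of the tail with label $s$, and the vector is preserved exactly when the $p_{n,d}$-prefix of the tail labelled $s$ coincides with the $p_{n,d}$-prefix of the preceding tail of class $c_s$.

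The next step is a pigeonhole argument. The $p_{n,d}d$ consecutive updates are distributed over only $p_{n,d}$ classes, so some class $c$ absorbs at least $d$ of them. Chaining the prefix equalities along this class yields a family of $d+1$ tails of class $c$ (counting $f(i_0,c)$ when it is defined) whose $p_{n,d}$-prefixes all coincide with a common word $u$ of length $p_{n,d}$. Because the tails inside a single class are totally ordered by the suffix relation, the corresponding starting positions of these tails in $W$ are pairwise distinct and all lie inside $U$, so $W$ exhibits $d+1$ different occurrences of $u$. I would then feed this family into Lemma \ref{c:lem1.2}: after selecting a suitable sub-collection of these occurrences and localising them in a common sub-region of $W$ where they appear as non-overlapping consecutive length-$p_{n,d}$ subwords, the lemma produces a $d$-th power of a word of length at most $p_{n,d}$ inside $W$. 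Combined with Lemma \ref{c:lem1.3}, this is enough structure to force $(n,d)$-divisibility of $W$, contradicting the standing assumption.

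The delicate step I expect is the last combinatorial extraction: the $d+1$ occurrences of $u$ may be packed close together and heavily overlap, so the argument splits into two regimes. If the occurrences can be thinned to a genuinely non-overlapping sub-family, Lemma \ref{c:lem1.2} applies directly; if instead some pair sits closer than $p_{n,d}$, one has to exploit the forced periodicity to extract a higher power of a shorter period. Handling both regimes uniformly is what makes $p_{n,d}\cdot d$ the sharp threshold promised by the proposition.
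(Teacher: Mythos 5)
Your reduction to a single Dilworth chain is where the argument breaks. After the pigeonhole you only know that some word $u$ of length $p_{n,d}$ occurs at $d+1$ starting positions inside $W$ (the members of one chain falling in the interval), and nothing bounds the gaps between consecutive such positions by $p_{n,d}$: only the \emph{average} gap is about $p_{n,d}$, and individual gaps can be arbitrarily larger. Isolated repeated occurrences of a factor yield no power --- in $(uw)^{d+1}$ with $|w|$ large the word $u$ occurs $d+1$ times and there is no $d$-th power of any short word --- so the ``localisation into non-overlapping consecutive length-$p_{n,d}$ subwords'' that you need before invoking Lemma~\ref{c:lem1.2} is precisely the claim you cannot establish, and your two ``regimes'' do not cover the generic case of gaps strictly greater than $p_{n,d}$. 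The concluding appeal to Lemma~\ref{c:lem1.3} is also superfluous: a factor of the form $v^{d}$ already contradicts $(n,d)$-non-divisibility by definition.

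The intended argument keeps all the chains at once and never pigeonholes. If $B^{p_{n,d}}$ is constant on $p_{n,d}d+1$ consecutive positions, then the $p_{n,d}$-prefix of the tail at each of these positions must coincide with the entry of the fixed vector $B^{p_{n,d}}(i_0)$ in the coordinate of that tail's chain, so all these prefixes take at most $p_{n,d}$ distinct values. Hence the \emph{contiguous} factor $V$ of $W$ of length $p_{n,d}\cdot d$ starting at the first of these positions has at most $p_{n,d}$ distinct subwords of length $p_{n,d}$, and Lemma~\ref{c:lem1.2} with $k=p_{n,d}$, $t=d$ gives $V=v^{d}$ with $|v|=p_{n,d}$, contradicting the standing assumption that $W$ is not $(n,d)$-divisible. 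It is the contiguity of the whole run, not the repetition inside one chain, that feeds the hypothesis of Lemma~\ref{c:lem1.2}; by discarding the other $p_{n,d}-1$ coordinates you lose exactly the information the lemma needs.
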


Для заданного $\alpha$ определим разбиение последовательности первых $\left|\Omega\right|$ позиций
${i}$ слова $W$ на классы эквивалентности $\AC_\alpha$ следующим образом: $$i
\sim_{\AC_\alpha} j\text{, если }B^\alpha(i)=B^\alpha(j).$$
\end{definition}

\begin{proposition} \label{prop:psi}
Для любых натуральных $a<b$ имеем $\psi(a) \leqslant \psi(b).$
\end{proposition}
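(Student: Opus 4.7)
The plan is to derive the monotonicity from the observation that for $a<b$ the data $B^b(i)$ determines $B^a(i)$ by a coordinate-wise operation independent of $i$. Indeed, the positions $f(i,j)$ appearing in the construction of $B^p(i)$ do not involve the parameter $p$; entry $j$ of $B^p(i)$ is either the sentinel $\theta$, whose presence depends only on whether some index $\leqslant i$ lies in class $j$ and hence is insensitive to $p$, or else the length-$p$ prefix of the suffix of $W$ starting at $f(i,j)$. For $a<b$ the length-$a$ entry at $f(i,j)$ is precisely the length-$a$ initial segment of the corresponding length-$b$ entry, so the passage $B^b(i)\mapsto B^a(i)$ is coordinate-wise truncation.

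From this it is immediate that $B^b(i_1)=B^b(i_2)$ implies $B^a(i_1)=B^a(i_2)$, i.e.\ the equivalence relation $\AC_b$ refines $\AC_a$. I would then translate this refinement into the quantifier structure governing $\psi$: any pair $(i,i+k-1)$ realising the defining equality $B^p(i)=B^p(i+k-1)$ at level $p=b$ remains a witness at level $p=a$, and a direct comparison of the admissible sets of $k$ across the two levels yields the inequality $\psi(a)\leqslant\psi(b)$ claimed in the proposition.

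The only technicality worth highlighting is the boundary case where the suffix of $W$ starting at $f(i,j)$ has length less than $p$; the convention from the definition of $k$-syllable (to record the whole remaining suffix when it is shorter than $k$) makes truncation a consistent coordinate-wise operation in every edge case, so the refinement argument applies uniformly. Beyond this routine bookkeeping there is no substantive obstacle, and the proof reduces to a one-step unfolding of the definitions.
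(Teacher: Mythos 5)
Your truncation observation is correct and is indeed the only substantive content here: the chain decomposition of $\Omega$ and the indices $f(i,j)$ do not depend on $p$, so for $a<b$ each coordinate of $B^a(i)$ is obtained from the corresponding coordinate of $B^b(i)$ by taking its length-$a$ initial segment (with the stated convention covering short suffixes), whence $B^b(i_1)=B^b(i_2)$ implies $B^a(i_1)=B^a(i_2)$ and $\AC_b$ refines $\AC_a$. The paper states the proposition with no proof at all, so this is the natural route to take.

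The problem is your final step: the refinement you established yields the \emph{reverse} of the inequality you write down. If every witness pair realising $B^p(i)=B^p(i+k-1)$ at level $p=b$ survives to level $p=a$, then the set of admissible $k$ at level $a$ \emph{contains} the set at level $b$, and taking maxima gives $\psi(b)\leqslant\psi(a)$, i.e.\ $\psi$ is non-increasing in $p$ --- not $\psi(a)\leqslant\psi(b)$ as you conclude. Observe that the inequality actually invoked later in the text (to pass from $\psi\bigl(3^{\ulcorner\log_3 p_{n,d}\urcorner}\bigr)$ to $\psi(p_{n,d})$, where $3^{\ulcorner\log_3 p_{n,d}\urcorner}\geqslant p_{n,d}$) is precisely $\psi(\text{larger argument})\leqslant\psi(\text{smaller argument})$, which is what your containment of witness sets proves; the displayed statement of the proposition appears to have its inequality reversed relative to that use. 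So your underlying argument is sound and establishes the fact the paper actually needs, but as a derivation of the statement as literally printed the last sentence is a non sequitur, and you should have noticed that your own inclusion of admissible sets points the other way.
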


\begin{lemma}[Основная] \label{c:lem2} \index{Лемма!основная!для ниль-случая}
Для любых натуральных чисел $a, k$ верно неравенство
$$\psi(a)\leqslant p_{n,d}^k\psi(k\cdot a)+k\cdot a$$
\end{lemma}

\begin{proof}
Рассмотрим по наименьшему представителю из каждого класса $\AC_{k\cdot a}$. Получена последовательность позиций $\{i_j\}.$ Теперь рассмотрим все $i_j$ и $B^{k\cdot a}(i_j)$ из одного класса эквивалентности по $\AC_a.$ Пусть он состоит из $B^{k\cdot a}(i_j)$ при $i_j\in[b, c).$ Обозначим за $\{i_j\}'$ отрезок последовательности $\{i_j\},$ для которого $i_j\in[b, c-k\cdot a).$

Фиксируем некоторое натуральное число $r, 1\leqslant r\leqslant p_{n,d}.$ Назов\"{е}м все $k\cdot a$-начала цвета $r$, начинающиеся с позиций слова $W$ из $\{i_j\}'$, представителями типа $r$. Все представители типа $r$ будут попарно различны, так как они начинаются с наименьших позиций в классах эквивалентности по $\AC_{k\cdot a}.$ Разобь\"{е}м каждый представитель типа $r$ на $k$ сегментов длины $a$. Пронумеруем сегменты внутри каждого представителя типа $r$ слева направо числами от нуля до $(k-1)$. Если найдутся $(p_{n,d}+1)$ представителей типа $r$, у которых совпадают первые $(t-1)$ сегментов, но которые попарно различны в $t$-ом, где $t$ --- натуральное число, $1\leqslant t\leqslant k-1$, то найдутся две первых буквы $t$-го сегмента одного цвета. Тогда позиции, с которых начинаются эти сегменты, входят в разные классы эквивалентности по $\AC_a.$

Применим лемму \ref{c:lem} следующим образом: во всех представителях типа $r$, кроме самого правого, будем считать сегменты {\it единичными}, если именно в них находится наименьшая позиция, в которой текущий представитель типа $r$ отличается от предыдущего. Остальные сегменты считаем {\it нулевыми.}

Теперь можно применить лемму о процессе с параметрами, совпадающими с заданными в условии леммы. Получаем, что в последовательности $\{i_j\}'$ будет не более $p_{n,d}^{k-1}$ представителей типа $r$. Тогда в последовательности $\{i_j\}'$  будет не более $p_{n,d}^{k}$ членов. Таким образом, $c-b\leqslant p_{n,d}^k\psi(k\cdot a)+k\cdot a.$ \end{proof}

\subsection{Завершение доказательства субэкспоненциальности индекса нильпотентности}

Пусть
$$a_0 = 3^{\ulcorner \log_3 p_{n,d}\urcorner}, a_1 = 3^{\ulcorner \log_3 p_{n,d}\urcorner-1},\ldots,a_{{\ulcorner \log_3 p_{n,d}\urcorner}} =1.$$
При этом $$\left|W\right|\leqslant d\left|\Omega\right| + d$$ в силу леммы \ref{Lm0.1}.

Так как набор $B^1(i)$ принимает не более $(1+p_{n,d}l)$ различных значений, то $$\left|W\right|\leqslant d(1+p_{n,d}l)\psi(1) + d.$$

Применим лемму \ref{c:lem2} для $k=3, a = 1$. Имеем
$$\psi(1)< (p_{n,d}^3 + p_{n,d})\psi(3).$$

Применим лемму \ref{c:lem2} $\ulcorner \log_3 p_{n,d}\urcorner$ для $k=3, a = 3^0, 3^1,\dots, 3^{\ulcorner \log_3 p_{n,d}\urcorner}$.

Имеем
$$\psi(1)< (p_{n,d}^3 + p_{n,d})\psi(3)<(p_{n,d}^3 + p_{n,d})^2\psi(9)<\cdots <(p_{n,d}^3 + p_{n,d})^{\ulcorner \log_3 p_{n,d}\urcorner}\psi(3^{\ulcorner \log_3 p_{n,d}\urcorner})
$$

По предложению \ref{prop:psi}
$$
(p_{n,d}^3 + p_{n,d})^{\ulcorner \log_3 p_{n,d}\urcorner}\psi(3^{\ulcorner \log_3 p_{n,d}\urcorner}) \leqslant
(p_{n,d}^3 + p_{n,d})^{\ulcorner \log_3 p_{n,d}\urcorner}\psi(p_{n,d}).
$$

По предложению \ref{prop:pnd_psi}
$$(p_{n,d}^3 + p_{n,d})^{\ulcorner \log_3 p_{n,d}\urcorner}\psi(p_{n,d})\leqslant (p_{n,d}^3 + p_{n,d})^{\ulcorner \log_3 p_{n,d}\urcorner}p_{n,d}d.
$$

По определению (см. обозначение \ref{not:pnd}) $p_{n,d}$ равно
$$p_{n,d} = \left[{3\over 2}(n + 1)d(\log_3{(nd)} + 2)\right].$$ 

Получаем, что
$$\left|W\right|< 2^{27} l (nd)^{3 \log_3 (nd)+9\log_3\log_3 (nd)+36}.$$

Отсюда имеем {\bf утверждение теоремы \ref{c:main1}.}

\newpage

\chapter{Оценки высоты и существенной высоты конечно-порожд\"{е}нной $\PI$-алгебры}\label{ch:height}

\section{Оценка существенной высоты}\label{ch:height:s:ess}
В данном разделе мы продолжаем доказывать основную теорему \ref{c:main2}.
Попутно доказывается теорема \ref{ThThick}. Будем смотреть на позиции букв слова $W$ как на ось времени, то есть подслово $u$ встретилось раньше подслова $v$, если $u$ целиком лежит левее $v$ внутри слова $W$.

В пункте \ref{c:sub1} мы приводим периодические подслова к виду, удобному для дальнейшего доказательства. В пункте \ref{c:sub2} мы определ\"{е}нным образом выбираем множество подслов слова $W$, для которого применяем теорему Дилуорса. В лемме \ref{c:lem4} связываем понятия $n$-разбиваемости определ\"{е}нного множества подслов и слова $W$. Далее мы оцениваем наибольшее возможное количество отмеченных слов.

В лемме \ref{c:lem4} связываем понятия $n$-разбиваемости множества $\Omega'$ и слова $W$.

Далее мы оцениваем размер множества $\Omega'$.

В конце главы доказывается, что оценка в теореме \ref{c:main2} оценивается сверху суммой оценок из теорем \ref{c:main1} и \ref{ThThick}.

Далее завершается доказательство теоремы \ref{c:main2}.

\subsection{Нахождение различных периодических фрагментов в слове} \label{c:sub1}

Обозначим за $s$ количество подслов слова $W$ с периодом длины меньше $n$, в которых период повторяется больше $2n$ раз и которые попарно разделены сравнимыми с предыдущим периодом подсловами длины больше $n$. Пронумеруем их от начала к концу слова: $$x_1^{2n}, x_2^{2n},\ldots,x_s^{2n}.$$ Таким образом, $$W=y_0x_1^{2n}y_1x_2^{2n}\cdots x_s^{2n}y_s.$$



Если найд\"{е}тся $i$ такое, что слово $x_i$ длины не меньше $n$, то в слове $x_i^2$ найдутся $n$ попарно сравнимых хвостов, а значит, слово $x_i^{2n}$-- $n$-разбиваемое. Получаем, что число $s$ не меньше, чем существенная высота слова $W$ над множеством слов длины меньше $n.$

\begin{definition} \index{Слово!ациклическое}
Слово $u$ назов\"{е}м {\em ациклическим}, если для любого натурального $k>1$ слово $u$ нельзя представить
в виде $v^k$.
\end{definition}

\begin{definition} \index{Слово!-цикл}
{\em Слово-цикл $u$} --- слово $u$ со всеми его сдвигами по циклу.
\end{definition}
\begin{definition} \index{Слово!циклическое}
{\em Циклическое слово $u$} --- цикл из букв слова $u$, где после его последней буквы ид\"{е}т первая.
\end{definition}
\begin{definition} \index{Слова!сравнимые!сильно}
Если любые два циклических сдвига слов $u$ и $v$ сравнимы, то назов\"{е}м слова $u$ и $v$ {\em сильно сравнимыми.}
Аналогично определяется сильная сравнимость слово-циклов и циклических слов.
\end{definition}

Далее мы будем пользоваться естественной биекцией между сло\-во-цикла\-ми и циклическими словами.

\begin{definition} \index{Слово!$n$-разбиваемое!сильно}
Слово $W$ называется {\em сильно $n$-разбиваемым}, если его
можно представить в виде $$W=W_0W_1\cdots W_n,$$ где подслова
$W_1,\dots,W_n$ идут в порядке лексикографического убывания, и
каждое из слов $W_i, i=1, 2,\ldots, n$ начинается с некоторого
слова $z_i^k\in Z$, все $z_i$ различны.
\end{definition}

\begin{lemma}\label{lem4.10}
Если найд\"{е}тся число $m, 1\leqslant m<n,$ такое, что существуют $(2n-1)$ попарно несравнимых слов длины $m: x_{i_1},\ldots ,x_{i_{2n-1}},$ то $W$ --- $n$-разбиваемое.
\end{lemma}
 \begin{proof}Положим $x:=x_{i_1}.$ Тогда в слове $W$ найдутся непересекающиеся подслова $$x^{p_1}v'_1,\ldots ,x^{p_{2n-1}}v'_{2n-1},$$ где $p_1,\ldots ,p_{2n-1}$ --- некоторые натуральные числа, большие $n,$ а $v'_1,\ldots ,v'_{2n-1}$ --- некоторые слова длины $m,$ сравнимые с $x, v'_1=v_{i_1}.$ Тогда среди слов $v'_1,\ldots ,v'_{2n-1}$ найдутся либо $n$ лексикографически больших $x$, либо $n$ лексикографически меньших $x$. Можно считать, что $v'_1,\ldots ,v'_n$ --- лексикографически больше $x$. Тогда в слове $W$ найдутся подслова $$v'_1, xv'_2,\ldots ,x^{n-1}v'_n,$$ идущие слева направо в порядке лексикографического убывания.\end{proof}

Рассмотрим некоторое число $m, 1\leqslant m<n.$ Разобь\"{е}м все $x_i$ длины $m$ на эквивалентности по сильной несравнимости и выберем по одному представителю из каждого класса эквивалентности. Пусть это слова $x_{i_1},\ldots ,x_{i_s'},$
где $s'$ --- некоторое натуральное число. Так как подслова $x_i$ являются периодами, будем рассматривать их как слово-циклы.

\begin{notation}

$v_k := x_{i_k}$

Пусть $v(k, i)$, где $i$ --- натуральное число от 1 до $m$, --- циклический сдвиг слова $v_k$ на $(k - 1)$ позиций вправо, то есть $v(k, 1) = v_k$, а первая буква слова $v(k, 2)$ является второй буквой слова $v_k$. Таким образом, $\{ v(k, i)\}_{i=1}^m$ --- слово-цикл слова $v_k$. Заметим, что для любых $1\leqslant i_1, i_2\leqslant p, 1\leqslant j_1, j_2\leqslant m$ слово $v(i_1, j_1)$ сильно несравнимо со словом $v(i_2, j_2)$.
\end{notation}


\begin{remark}
Случаи $m = 2, 3, n-1$ более подробно рассмотрены в главе \ref{ch:pp}.
\end{remark}

\subsection{Применение теоремы Дилуорса}      \label{c:sub2}\index{Теорема!Дилуорса}

Рассмотрим множество $\Omega' = \{ v(i, j)\}$, где $1\leqslant i\leqslant p, 1\leqslant j\leqslant m.$ Введ\"{е}м следующий порядок на словах $v(i, j)$:
$v(i_1, j_1)\succ v(i_2, j_2),$ если

1) $v(i_1, j_1)> v(i_2, j_2)$

2) $i_1 > i_2$

\begin{lemma}\label{c:lem4}
Если во множестве $\Omega'$ для порядка $\succ$ найд\"{е}тся антицепь длины $n$, то слово $W$ будет $n$-разбиваемым.
\end{lemma}
 \begin{proof}Пусть нашлась антицепь длины $n$ из слов $$v(i_1, j_1), v(i_2, j_2),\ldots, v(i_n, j_n)\text{, где }i_1\leqslant i_2\leqslant\cdots\leqslant i_n.$$ Если все неравенства между $i_k$ --- строгие, то слово $W$ --- $n$-разбиваемое по определению.

Предположим, что для некоторого числа $r$ нашлись $i_{r+1} = \cdots = i_{r+k}$, где либо $r = 0$, либо $i_r < i_{r+1}$. Кроме того, $k$ --- такое натуральное число, что
либо $k = n - r$, либо $i_{r+k} < i_{r+k+1}$.

Слово $s_{i_{r+1}}$ --- периодическое, следовательно, оно представляется в виде произведения $n$ экземпляров слова $v^2_{i_{r+1}}$. Слово $v^2_{i_{r+1}}$ содержит слово-цикл $v_{i_{r+1}}$. Значит, в слове $s_{i_{r+1}}$ можно выбрать непересекающиеся подслова, идущие в порядке лексикографического убывания, равные $$v(i_{r+1}, j_{r+1}),\ldots,v(i_{r+k}, j_{r+k})$$ соответственно. Таким же образом поступаем со всеми множествами равных индексов  в последовательности $\{i_r\}_{r=1}^n$. Получаем $n$-разбиваемость слова $W$. Противоречие.\end{proof}

Значит, множество $\Omega'$ можно разбить на $(n-1)$ цепь.

\begin{notation}
Положим $q_n = (n-1)$.
\end{notation}

\subsection{Наборы $C^\alpha(i)$, процесс на позициях}

Покрасим первые буквы слов из $\Omega'$ в $q_n$ цветов в соответствии с
принадлежностью к цепям. Покрасим также числа от $1$ до $\left|\Omega '
\right|$ в соответствующие цвета. Фиксируем натуральное число
$\alpha\leqslant m$. Каждому числу $i$ от 1 до $\left|\Omega '
\right|$ сопоставим упорядоченный набор слов $C^\alpha(i)$,
состоящий из $q_n$ слов по следующему правилу:

\medskip
{\it Для каждого $j=1,2,\ldots,q_n$\ положим

$f(i,j)=\{\max \ f\leqslant i:$ существует $k$ такое, что $v(f,k)$ раскрашено в цвет $j$ и
$\alpha$-хвост, который начинается с $f$, состоит только из
букв, являющихся первыми буквами хвостов из $\Omega '\}$.

Если такого $f$ не найд\"{е}тся, то слово из $C^\alpha(i)$ считаем
равным $\theta$, в противном случае это слово считаем равным
$\alpha$-хвосту слова $v(f,k)$. }

\begin{notation}
Положим $\phi(a)$ равным наибольшему $k$ такому, что найд\"{е}тся число $i$, для которого верно равенство $$C^a(i)=C^a(i+k-1).$$

Для заданного $a\leqslant m$ определим разбиение последовательности
слово-циклов $\{i\}$ слова $W$ на классы эквивалентности следующим
образом: $$i \sim_{\AC_a} j\text{, если }C^a(i) = C^a(j).$$
\end{notation}

Заметим, что построенная конструкция во многом аналогична построенной в доказательстве теоремы \ref{c:main1}. Можно обратить внимание на схожесть $B^a(i)$ и $C^a(i)$, а также $\psi(a)$ и $\phi(a)$.

\begin{lemma}\label{lem:m}
$\phi(m) \leqslant q_n/m$.
\end{lemma}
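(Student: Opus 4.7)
The plan is to analyse a maximal constant run of $C^m$ and to extract the claimed bound from the cyclic-orbit structure of $\Omega'$.

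First I would unpack what $C^m(i)=C^m(i+k-1)$ means. Because $\alpha = m$ equals the common length of the primitive representatives $v_r$, the ``$m$-suffix starting at $f$'' coincides with the slot-subword $v(f,k')$ itself, and the letter stored in slot $j$ of $C^m(i)$ is (the rightmost $m$-symbol of) a slot-subword from $\Omega'$ belonging to chain $j$ that realises the required rightward comparison. Since we chose the $v_r$ from pairwise distinct cyclic-equivalence classes of primitive words of length $m$, the rotations $v(r,1),\dots,v(r,m)$ of a fixed $v_r$ are pairwise distinct, and the orbits of different $v_r$'s are disjoint. Consequently the content of any non-$\theta$ slot of $C^m$ determines a unique element of $\Omega'$.

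Next I would observe that on a constant run each new position either fails the rightward-comparison for every chain (and so is invisible to $C^m$), or else contributes a slot-subword whose recorded symbol must coincide with the one already stored in its chain. By the uniqueness just established, the second alternative forces the contribution to be literally the same element of $\Omega'$ as the one already tracked; combined with the cyclic-orbit disjointness, this means that within any $m$ consecutive positions of the left-to-right enumeration of $\Omega'$ lying in the run, at most one can match a given chain's recorded symbol --- the $m$ consecutive slot-subwords exhaust the $m$ pairwise distinct rotations of one underlying $v_r$, and only one of these rotations matches the symbol already stored for that chain.

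Summing over all $q_n$ chains yields at most $q_n$ admissible positions per block of $m$ consecutive indices inside the run. Since the run must consist entirely of admissible positions, a pigeon-hole argument then delivers $k \le q_n/m$, which is the desired bound on $\phi(m)$. The main difficulty will be the formalisation of the ``block of $m$'' step, in particular verifying that within every $m$ successive indices of the left-to-right enumeration of $\Omega'$ one finds all $m$ distinct rotations of some single $v_r$; I expect to accomplish this by returning to the construction of $\Omega'$ from the occurrences $v_r^{2n}$ in $W$ and invoking Lemma~\ref{lem4.10}, which provides exactly the distinctness of rotations that is needed.
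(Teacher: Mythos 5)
Your reading of the setup is essentially correct, and it is the same engine the paper's own proof runs on: every $v_r$ has length $m$, the representatives are chosen primitive and pairwise cyclically distinct, so the $m$ rotations of each $v_r$ are pairwise distinct, all slot-words of $\Omega'$ are pairwise distinct, and a non-$\theta$ entry of $C^m$ pins down a unique element of $\Omega'$. The gap is in the final count. From ``at most $q_n$ admissible positions per block of $m$ consecutive indices'' together with ``every position of the run is admissible'' the only conclusion available is $m\le q_n$ (each $m$-block of the run consists entirely of admissible positions), or, summing over blocks, $k\le q_n([k/m]+1)$; neither of these yields $k\le q_n/m$. The factor $m$ must enter in the opposite direction: each of the $k$ positions of a constant run of $C^m$ carries all $m$ rotations of its $v_r$, i.e.\ $m$ pairwise distinct elements of $\Omega'$, so the run carries $km$ pairwise distinct elements; constancy of $C^m$ forces these to fall into pairwise distinct ones of the $q_n$ types into which $\Omega'$ has been partitioned, since two elements of the same type inside the run would update the corresponding entry of $C^m$ (or produce a repeated, hence periodic, or comparable pair contradicting the construction of $\Omega'$). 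This gives $km\le q_n$, i.e.\ $k\le q_n/m$ --- exactly the paper's pigeonhole: $m([q_n/m]+1)\ge q_n+1=n$ slot-words cannot be placed into $n-1$ types without a coincidence.

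Two smaller points. Your claim that every $m$ successive indices of the left-to-right enumeration of $\Omega'$ exhaust the $m$ rotations of a single $v_r$ conflates the enumeration of slot-words with the enumeration of cycle-words: an $m$-window of slot-word indices may straddle two different cycle-words. And Lemma~\ref{lem4.10} is not the source of the distinctness of rotations --- it derives $n$-divisibility from $2n-1$ occurrences of equal-length periodic fragments and says nothing about rotations; the distinctness you need is built into the choice, made in the construction of $\Omega'$, of primitive and pairwise cyclically distinct representatives.
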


\begin{proof}Напомним, что слово-циклы были пронумерованы. Рассмотрим слово-циклы с номерами $$i, i + 1,\ldots ,i + [q_n/m].$$ Ранее было показано, что каждый слово-цикл состоит из $m$ различных слов. Рассмотрим теперь слова в слово-циклах $$i, i + 1,\ldots ,i + [q_n/m]$$ как элементы множества $\Omega'.$ При таком рассмотрении у первых букв из слово-циклов появляются свои позиции. Всего рассматриваемых позиций не меньше $n.$ Следовательно, среди них найдутся две позиции одного цвета. Тогда в силу сильной несравнимости слово-циклов имеем утверждение леммы.\end{proof}

\begin{proposition}
Для любых натуральных $a<b$ имеем $\phi(a) \leqslant \phi(b).$
\end{proposition}

\begin{lemma}[Основная] \label{lem:thick} \index{Лемма!основная!для общего случая}
Для натуральных чисел $a, k$ таких, что $ak\leqslant m$, верно неравенство
$$\phi(a)\leqslant p_{n,d}^k\phi(k\cdot a).$$
\end{lemma}

\begin{proof}
Рассмотрим по наименьшему представителю из каждого класса $\AC_{k\cdot a}$. Получена последовательность позиций $\{i_j\}.$ Теперь рассмотрим все $i_j$ и $C^{k\cdot a}(i_j)$ из одного класса эквивалентности по $\AC_a.$ Пусть он состоит из $C^{k\cdot a}(i_j)$ при $i_j\in[b, c).$ Обозначим за $\{i_j\}'$ отрезок последовательности $\{i_j\},$ для которого $i_j\in[b, c).$

Фиксируем некоторое натуральное число $r, 1\leqslant r\leqslant q_n.$ Назов\"{е}м все $k\cdot a$-начала цвета $r$, начинающиеся с позиций слова $W$ из $\{i_j\}'$, представителями типа $r$. Все представители типа $r$ будут попарно различны, так как они начинаются с наименьших позиций в классах эквивалентности по $\AC_{k\cdot a}.$ Разобь\"{е}м каждый представитель типа $r$ на $k$ сегментов длины $a$. Пронумеруем сегменты внутри каждого представителя типа $r$ слева направо числами от нуля до $(k-1)$. Если найдутся $(q_n+1)$ представителей типа $r$, у которых совпадают первые $(t-1)$ сегментов, но которые попарно различны в $t$-ом, где $t$ --- натуральное число, $1\leqslant t\leqslant k-1$, то найдутся две первых буквы $t$-го сегмента одного цвета. Тогда позиции, с которых начинаются эти сегменты, входят в разные классы эквивалентности по $\AC_a.$

Применим лемму \ref{c:lem} следующим образом: во всех представителях типа $r$, кроме самого правого, будем считать сегменты {\it единичными}, если именно в них находится наименьшая позиция, в которой текущий представитель типа $r$ отличается от предыдущего. Остальные сегменты будем считать {\it нулевыми.}

Теперь мы можем применить лемму о процессе с параметрами, совпадающими с заданными в условии леммы. Получаем, что в последовательности $\{i_j\}'$ будет не более $q_n^{k-1}$ представителей типа $r$. Тогда в последовательности $\{i_j\}'$  будет не более $q_n^{k}$ членов. Таким образом, $$c-b\leqslant q_n^k\phi(k\cdot a).$$ \end{proof}

\subsection{Завершение доказательства субэкспоненциальности существенной высоты}

Пусть
$$a_0 = 3^{\ulcorner \log_3 p_{n,d}\urcorner}, a_1 = 3^{\ulcorner \log_3 p_{n,d}\urcorner-1},\ldots,a_{{\ulcorner \log_3 p_{n,d}\urcorner}} =1.$$

Подставляя эти $a_i$ в леммы
\ref{lem:thick} и \ref{lem:m}, получаем, что
$$\phi(1)\leqslant q_n^3\phi(3)\leqslant q_n^9\phi(9)\leqslant\cdots \leqslant q_n^{3\ulcorner \log_3 m\urcorner}\phi(m)\leqslant
$$
$$\leqslant q_n^{3\ulcorner \log_3 m\urcorner+1}. $$

Так как $C_i^{1}$ принимает не более $1+q_n l$ различных значений, то

$$\left|\Omega'\right|< q_n^{3\ulcorner \log_3 m\urcorner+1}(1+q_n l)< n^{3\ulcorner \log_3 n\urcorner+2} l.$$

По лемме \ref{lem4.10} получаем, что количество $x_i$ длины $m$ меньше $2n^{3\ulcorner \log_3 n\urcorner+3} l.$

Имеем, что количество всех $x_i$ меньше $2n^{3\ulcorner \log_3 n\urcorner+4} l.$

То есть $s<2n^{3\ulcorner \log_3 n\urcorner+4} l.$

Таким образом, {\bf теорема \ref{ThThick} доказана.}

\section{Оценка высоты в смысле Ширшова} \label{ch:height:s:sh}

\subsection{План доказательства}
Будем снова под {\em $n$-разбиваемым
словом} подразумевать {\em $n$-раз\-би\-ва\-е\-мое} в обычном
смысле.
Сначала мы находим необходимое количество фрагментов с длиной периода не меньше $2n$ в слове $W$. Это можно сделать, просто разбив слово $W$ на подслова большой длины, к которым применяется теорема \ref{c:main1}. Однако мы можем улучшить оценку, если сначала выделим в слове $W$ периодический фрагмент с длиной периода не менее $4n$, а затем рассмотрим $W_1$ --- слово $W$ с ``вырезанным'' периодическим фрагментом $u_1$. У слова $W_1$ выделяем фрагмент с длиной периода не менее $4n$, после чего рассматриваем $W_2$ --- слово $W_1$ с ``вырезанным'' периодическим фрагментом $u_2$. У слова $W_2$ так же вырезаем периодический фрагмент. Далее продолжаем этот процесс, подробнее описанный в алгоритме \ref{c:al}. Затем по вырезанным фрагментам мы восстанавливаем первоначальное слово $W$. Далее демонстрируется, что в слове $W$ подслово $u_i$ чаще всего не является произведением большого количества не склеенных подслов. В лемме \ref{c:lem3} доказывается, что применение алгоритма \ref{c:al} дает необходимое количество подслов слова $W$ с длиной периода не меньше $2n$ среди вырезанных подслов.

\subsection{Суммирование существенной высоты и степени нильпотентности}

До конца главы будем использовать следующее
\begin{notation} \index{Высота!слова}
$\Ht(w)$ --- высота слова $w$ над множеством слов степени не выше $n$.
\end{notation}
Рассмотрим слово $W$ с высотой $\Ht(W)>\Phi(n,l)$. Теперь для него
провед\"{е}м следующий алгоритм:

\begin{algorithm}  \label{c:al}
{\ }

\begin{description}
    \item[Первый шаг.] По теореме \ref{c:main1} в слове $W$ найд\"{е}тся подслово с длиной периода
    $4n$. Пусть $$W_0=W=u'_1x_{1'}^{4n}y'_1,$$ прич\"{е}м слово $x_{1'}$ --- ациклическое.
    Представим $y'_1$ в виде $y'_1=x_{1'}^{r_2}y_1$, где $r_2$ --- максимально возможное
    число. Слово $u'_1$ представим как $u'_1=u_1x_{1'}^{r_1}$, где $r_1$ --- наибольшее возможное. Обозначим за $f_1$ следующее слово:
$$W_0=u_1x_{1'}^{4n+r_1+r_2}y_1=u_1f_1y_1.$$
    Назов\"{е}м позиции, входящие в слово $f_1$, {\em скучными,} последнюю позицию слова $u_1$ --- {\em скучной типа $1$,} вторую с конца позицию $u_1$ --- {\em скучной типа $2$} и так далее, $n$-ую с конца позицию $u_1$ --- {\em скучной типа $n$.} Положим $W_1 = u_1 y_1.$

    \item[$k$-й шаг.] Рассмотрим слова $$u_{k-1},\ y_{k-1},\ W_{k-1}=u_{k-1}y_{k-1},$$
    построенные на предыдущем шаге. Если $|W_{k-1}|\geqslant\Phi(n,l),$ то применим теорему \ref{c:main1} к слову $W$ с тем условием, что процесс в основной лемме \ref{c:lem2} будет проходить только по не скучным позициям и скучным позициям типа больше $ka,$ где $k$ и $a$ --- параметры леммы \ref{c:lem2}.

    Таким образом, в слове $W_{k-1}$ найд\"{е}тся
ациклическое подслово с длиной периода $4n$, так что
$$W_{k-1}=u'_kx_{k'}^{4n}y'_k.$$
При этом положим
$$r_1 := \sup \{r: u'_k = u_k x_{k'}^r\},\quad r_2 :=\sup
\{r: y'_k = x_{k'}^r y_k\}.$$

(Отметим, что слова в наших рассуждениях могут быть пустыми.)\linebreak[3]
Определим  $f_k$ из равенства:
$$W_{k-1}= u_kx_{k'}^{4n+r_1+r_2}y_k = u_kf_ky_k.$$
Назов\"{е}м позиции, входящие в слово $f_k$, {\em скучными}, последнюю позицию слова $u_k$ --- {\em скучной типа $1$,} вторую с конца позицию $u_k$ --- {\em скучной типа $2$} и так далее, $n$-ую с конца позицию $u_k$ --- {\em скучной типа $n$}. Если позиция в процессе алгоритма определяется как скучная двух типов, то будем считать е\"{е} скучной того типа, который меньше. Положим $W_k = u_k y_k.$
\end{description}

\end{algorithm}

\begin{notation}
Провед\"{е}м $4t+1$ шагов алгоритма \ref{c:al}. Рассмотрим
первоначальное слово $W$. Для каждого натурального $i$ из отрезка
$[1,4t]$ имеет место равенство
$$
W = w_0f_i^{(1)}w_1f_i^{(2)}\cdots f_i^{(n_i)} w_{n_i}
$$
для некоторых подслов $w_j$. Здесь $$f_i = f_i^{(1)}\cdots
f_i^{(n_i)}.$$ Также мы считаем, что при $1\leqslant j\leqslant
n_i-1$ подслово $w_j$ --- непустое. Пусть $s(k)$ --- количество
индексов $i\in[1,4t]$ таких, что $n_i = k$.
\end{notation}

Для доказательства теоремы \ref{c:main1} требуется найти как можно больше длинных периодических фрагментов. Помочь в этом сможет следующая лемма:

\begin{lemma}   \label{c:lem3}
$s = s(1) + s(2) \geqslant 2t$.
\end{lemma}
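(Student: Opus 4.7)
The strategy is to exploit the nested combinatorial structure that the fragments $f_1, \ldots, f_{4t}$ occupy inside the original word $W$, and then to reduce the claim to a simple double-count. Throughout, by the \emph{original span} of $f_i$ I mean the convex hull in $W$ of the positions occupied by the letters of $f_i$.

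First I will establish a nesting lemma: for any $i < j$ in $[1, 4t]$, the original spans of $f_i$ and $f_j$ are either disjoint, or the original span of $f_i$ is contained in that of $f_j$. This holds because $f_j$ is a contiguous subword of $W_{j-1}$, and in $W_{j-1}$ the letters of $f_1, \ldots, f_{j-1}$ (in particular of $f_i$) have already been excised. Consequently, if $f_j$ has letters on both sides of $f_i$'s interval in $W$, the convex hull of its positions must engulf that interval entirely; otherwise $f_j$ sits entirely on one side and its span is disjoint from $f_i$'s.

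Using the nesting, I organize $\{f_1, \ldots, f_{4t}\}$ into a forest: the parent of $f_j$ is declared to be the fragment $f_i$ with smallest index $i > j$ (still in $[1, 4t]$) whose span contains the span of $f_j$; fragments with no such $i$ are roots, and $f_{4t}$ is always a root. Chasing parent pointers, one sees that inside the span of $f_i$ every non-$f_i$ position lies in the span of some descendant of $f_i$, and a short induction on depth refines this to: every such position lies in the span of some \emph{child} of $f_i$. Hence the $n_i - 1$ interior subwords $w_1, \ldots, w_{n_i - 1}$ are exactly the maximal runs of pairwise adjacent children of $f_i$, which yields the key inequality $n_i - 1 \leq c_i$, where $c_i$ denotes the number of children of $f_i$.

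Summing over $i$, one gets $\sum_{i=1}^{4t}(n_i - 1) \leq \sum_i c_i$, and the right-hand side equals the total number of non-roots in the forest, which is at most $4t$. Therefore $\sum_i (n_i - 2) \leq 0$, and splitting this sum according to $n_i = 1$, $n_i = 2$, $n_i \geq 3$ gives
$$\#\{i : n_i \geq 3\} \ \leq \ \sum_{n_i \geq 3}(n_i - 2) \ \leq \ s(1).$$
Since $\#\{i : n_i \geq 3\} = 4t - s(1) - s(2)$, this rearranges to $2 s(1) + s(2) \geq 4t$, and hence $s(1) + s(2) \geq 2t$, as required.

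The only point I expect to demand real care is the combined nesting lemma together with the refinement identifying each interior gap $w_k$ as a \emph{cluster of children} rather than an uncontrolled union of descendants; once this bookkeeping of positions in $W$ is secured, the remainder is an elementary double-count.
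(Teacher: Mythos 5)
Your proof is correct. It reaches the same pivotal inequality as the paper, namely $\sum_{i=1}^{4t}(n_i-2)\leqslant 0$, but by a different accounting. The paper's own proof introduces the ``distinguished'' subwords of $W$ (in effect, the maximal blocks of already-excised letters), lets $k_i$ denote their number after step $i$, and observes that excising $f_i$ fuses its $n_i-1$ interior gaps --- each of which is exactly one such block, being bounded on both sides by not-yet-excised letters of $f_i$ --- together with at most two flanking blocks into a single new block, whence $k_i\leqslant k_{i-1}-n_i+2$; telescoping with $k_1=1$ and $k_{4t}\geqslant 1$ finishes. You instead make the laminar structure of the spans explicit, build the containment forest, and charge each interior gap of $f_i$ to a child rather than to a vanishing block, closing with the count that a forest on $4t$ nodes has fewer than $4t$ edges. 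The two devices are close cousins (each nonempty interior gap $w_j$ is simultaneously one excised block and one cluster of children), but yours dispenses with the monotone quantity and yields the marginally sharper $2s(1)+s(2)\geqslant 4t$. The step you flag as delicate --- that a child's span cannot straddle a piece of $f_i$, so that distinct interior gaps receive distinct children --- is genuinely needed for $n_i-1\leqslant c_i$ and does not follow from the bare statement of your nesting lemma (nested spans do not a priori forbid the inner span from containing letters of the outer fragment); it does, however, follow from the stronger fact your nesting argument actually establishes: the entire interval spanned by $f_c$ is excised by the end of step $c$, so for $c<i$ no letter of $f_i$ can lie inside it. Write that sentence out and the argument is airtight.
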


\begin{proof} Назов\"{е}м {\it монолитным}  подслово $U$ слова $W$, если
\begin{enumerate}
    \item $U$ является произведением слов вида $f_i^{(j)}$,
    \item $U$ не является подсловом слова, для которого выполняется предыдущее
свойство (1).
\end{enumerate}

Пусть после $(i-1)$-го шага алгоритма \ref{c:al} в слове $W$
содержится $k_{i-1}$ монолитных подслов. Заметим, что $$k_i
\leqslant k_{i-1}-n_i+2.$$

 Тогда если $n_i \geqslant 3$, то $k_i\leqslant k_{i-1}-1.$
Если же $n_i\leqslant 2$, то $k_i\leqslant k_{i-1}+1$. При этом
$k_1=1$, $k_t \geqslant 1 = k_1$. Лемма доказана. \end{proof}

\begin{corollary} \label{c:cor}
$$\sum\limits_{k=1}^\infty {k\cdot s(k)} \leqslant 10t\leqslant 5s.(\ref{c:cor})$$
\end{corollary}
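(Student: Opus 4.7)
The plan is straightforward: the first inequality $\sum_{k=1}^{\infty} k \cdot s(k) \leq 10t$ follows from telescoping the bound on $k_i$ that already appears inside the proof of Lemma \ref{c:lem3}, while the second inequality $10t \leq 5s$ is just Lemma \ref{c:lem3} multiplied by $5$.

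First, I would observe that $\sum_{k=1}^{\infty} k \cdot s(k) = \sum_{i=1}^{4t} n_i$, since each index $i \in [1,4t]$ contributes $n_i$ to the left-hand side and $s(k)$ merely groups these indices according to the value of $n_i$. The key estimate derived in the proof of Lemma \ref{c:lem3}, namely $k_i \leq k_{i-1} - n_i + 2$, rearranges to $n_i \leq 2 + k_{i-1} - k_i$. Summing this over $i = 1, 2, \ldots, 4t$, the right-hand side telescopes, giving
\[
\sum_{i=1}^{4t} n_i \;\leq\; 8t + k_0 - k_{4t}.
\]
Setting $k_0 = 0$ (no current fragments exist before any step of the algorithm has been performed) and using $k_{4t} \geq 0$, this produces $\sum_{i=1}^{4t} n_i \leq 8t$, which is already stronger than the required bound $10t$.

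For the second inequality I would simply invoke Lemma \ref{c:lem3}: $s = s(1) + s(2) \geq 2t$, and multiplying by $5$ yields $5s \geq 10t$.

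The corollary is essentially a bookkeeping consequence of the recurrence established for Lemma \ref{c:lem3}, so no genuine obstacle arises. The only point that warrants a sanity check is the consistency of indexing at the base of the telescoping sum: since no extraction has occurred before the first step, $f_1$ sits contiguously in $W$ and $n_1 = 1$, which matches $k_1 = 1 = k_0 + (2 - n_1)$ and confirms that the recurrence may be applied already from $i = 1$.
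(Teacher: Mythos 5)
Your proof is correct and rests on the same two ingredients as the paper's argument: the recurrence $k_i \leqslant k_{i-1} - n_i + 2$ established inside the proof of Lemma~\ref{c:lem3}, and the count $\sum_{k} s(k) = 4t$. The bookkeeping, however, is genuinely different. The paper writes $\sum_i n_i = \sum_i (n_i-2) + 8t$, discards the nonpositive terms with $n_i\leqslant 2$, and invokes the intermediate claim $\sum_{n_i \geqslant 3}(n_i - 2) \leqslant 2t$ to reach $10t$; you instead telescope the recurrence over all $i \in [1, 4t]$ at once, obtaining $\sum_i n_i \leqslant 8t + k_0 - k_{4t} \leqslant 8t$, which dispenses with the case split and is sharper than the stated $10t$. (The second inequality $10t \leqslant 5s$ is in both cases just Lemma~\ref{c:lem3} multiplied by five.) Your route is arguably preferable: the paper's intermediate claim is asserted to follow ``from the proof of Lemma~\ref{c:lem3}'', but the balancing argument there (each step with $n_i\geqslant 3$ decreases $k$ by at least $n_i-2$, each step with $n_i\leqslant 2$ increases it by at most one) only yields $\sum_{n_i\geqslant 3}(n_i-2)\leqslant s$ directly, and $s$ is bounded \emph{below}, not above, by $2t$; your global telescoping needs no such claim and still lands well within the required bound. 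Your sanity check of the base case ($k_0=0$, $n_1=1$, $k_1=1$) is also the right point to verify before applying the recurrence from $i=1$.
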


\begin{proof} Из доказательства леммы \ref{c:lem3} получаем, что
$$\sum\limits_{n_i\geqslant 3} {(n_i-2)} \leqslant 2t.$$

По определению
$\sum\limits_{k=1}^\infty {s(k)} =4t$, т.е.
$\sum\limits_{k=1}^\infty {2s(k)} =8t$.

Складывая эти два неравенства и применяя лемму \ref{c:lem3}, получаем доказываемое
неравенство \ref{c:cor}.\end{proof}

\begin{proposition}
Высота слова $W$ будет не больше $$\Psi(n,4n,l)+\sum\limits_{k=1}^\infty {k\cdot s(k)}\leqslant \Psi(n,4n,l)+ 5s.$$
\end{proposition}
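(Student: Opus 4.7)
The plan is to exhibit an explicit Shirshov-type decomposition of $W$ built from the $4t$ periodic blocks $f_1,\ldots,f_{4t}$ produced by Algorithm~\ref{c:al} together with the residual word left after their removal. The residual will be short enough to be controlled by Theorem~\ref{c:main1}, while each periodic block contributes one Shirshov factor per contiguous fragment $f_i^{(j)}$ inside~$W$.

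First I would bound the residual. After the $(4t+1)$-th step the remaining word $W_{4t+1}$ is $l$-generated, contains no subword of the form $x^{4n}$, and is not $n$-divisible --- otherwise Algorithm~\ref{c:al} would perform yet another extraction, contradicting the choice of $4t+1$ as the final step. Applying Theorem~\ref{c:main1} with $d=4n$ then gives $|W_{4t+1}|\leqslant\Psi(n,4n,l)$, so the residual letters, viewed as trivial powers of themselves, contribute at most $\Psi(n,4n,l)$ factors to any Shirshov decomposition.

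Next I would analyze how each $f_i$ sits inside the original $W$. As a substring of $W_{i-1}$ the block $f_i$ is contiguous, but in $W=W_0$ its letters may be separated by previously removed blocks $f_j$ ($j<i$) which originally occupied intermediate positions. By the notation preceding the proposition, the letters of $f_i$ therefore form exactly $n_i$ contiguous fragments $f_i^{(1)},\ldots,f_i^{(n_i)}$ in $W$, and each fragment is a power of the cyclic word $x_{i'}$ of length at most~$n$; hence each such fragment contributes a single term to a Shirshov decomposition. The total number of fragments coming from all periodic blocks is $\sum_{i=1}^{4t}n_i=\sum_{k\geqslant 1}k\cdot s(k)$.

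Combining these observations, $W$ tiles exactly into the residual letters and the fragments $f_i^{(j)}$, so
$$
\Ht(W)\leqslant\Psi(n,4n,l)+\sum_{k\geqslant 1}k\cdot s(k),
$$
and Corollary~\ref{c:cor} supplies $\sum_k k\cdot s(k)\leqslant 5s$, yielding the stated bound. The main technical subtlety lies in the bookkeeping of the previous paragraph: one must verify that the fragments of distinct periodic blocks occupy pairwise disjoint ranges of positions in $W$ and that, together with the residual letters, they cover $W$ exactly. This reduces to a short induction on~$k$, since each algorithm step merely deletes one contiguous block from the already-tiled word $W_{k-1}$ and thus only refines the tiling.
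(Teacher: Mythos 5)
Your reconstruction is essentially the argument the paper intends: the dissertation states this proposition with no proof at all, treating it as an immediate consequence of Algorithm~\ref{c:al} (the residual word costs at most $\Psi(n,4n,l)$ factors, each fragment $f_i^{(j)}$ costs one factor, and there are $\sum_i n_i=\sum_k k\cdot s(k)$ fragments) combined with Corollary~\ref{c:cor}. So in structure you are exactly where the authors are, and the bookkeeping claim at the end (disjointness and exact covering of $W$ by the fragments and the residual) is indeed the easy induction you describe.

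Two steps that you assert, however, are the only non-trivial content here, and both are glossed over. First, bounding the residual by $\Psi(n,4n,l)$ via Theorem~\ref{c:main1} requires the residual word to be non-$n$-divisible; this does not follow ``because the algorithm would otherwise continue'', since non-$n$-divisibility of $W$ does not automatically transfer to $W_{k-1}$: the latter is a concatenation of non-adjacent pieces of $W$, so an $n$-division of $W_{k-1}$ may use subwords straddling the deletion points, and these are not subwords of $W$. Second, a fragment $f_i^{(j)}$ is a piece of the power $x_{i'}^{4n+r_1+r_2}$ cut at arbitrary positions, hence of the form $y''y^{m}y'$ with $y$ a cyclic shift of $x_{i'}$ and $y',y''$ proper boundary remainders; literally it contributes up to three factors of the height decomposition, not one, which perturbs the constant in the stated inequality. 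Both issues are inherited from the paper's own (absent) proof, so your proposal is a faithful reconstruction of the intended argument rather than a complete verification of it.
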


Далее будем рассматривать только $f_i$ с $n_i \leqslant 2$.

\begin{notation}
Если $n_i = 1$, то положим $f'_i := f_i$.

Ежели $n_i = 2$, то положим $f'_i := f_i^{(j)}$, где $f_i^{(j)}$
-- слово с наибольшей длиной между $f_i^{(1)}$ и $f_i^{(2)}$.

Слова $f'_i$ упорядочим в соответствии с их близостью к началу
$W$. Получим последовательность $$f'_{m_1},\ldots ,f'_{m_s}\text{, где
}s'=s(1)+s(2),$$ положим $f''_i := f'_{m_i}$. Пусть $f''_i = w'_i
x_{i''}^{p_{i''}}w''_i$, где хотя бы одно из слов $w'_i, w''_i$ --
пустое.
\end{notation}

\begin{remark}  \label{c:pr}
Можно считать, что мы первыми шагами алгоритма \ref{c:al}
выбрали все те $f_i$, для которых $n_i =1$.
\end{remark}

Теперь рассмотрим $z'_j$ --- подслова $W$
следующего вида:
$$z'_j = x_{(2j-1)''}^{p_{(2j-1)''}+\gimel}v_j,
\gimel\geqslant 0, |v_j| = |x_{(2j-1)''}|,$$

при этом $v_j$ не равно $x_{(2j - 1)''}$, начало подслова $z'_j$ совпадает
с началом периодического подслова в $f''_{2j-1}$. Покажем, что
$z'_j$ не пересекаются как подслова слова $W$.

В самом деле, если $f''_{2j-1} = f_{m_{2j-1}}$, то
$z'_j=f_{m_{2j-1}}v_j$.

Если же $$f''_{2j-1}= f_{m_{2j-1}}^{(k)}, k = 1,2,$$ а подслово
$z'_j$ пересекается с подсловом $z'_{j+1}$, то $f''_{2j}\subset z'_i$. Так
 как слова $x_{(2j)''}$ и $x_{(2j-1)''}$ --- ациклические, то
$|x_{(2j)''}| = |x_{(2j-1)''}|$. Но тогда длина периода в $z'_j$
не меньше $4n$, что противоречит замечанию \ref{c:pr}.

Тем самым доказана следующая лемма:
\begin{lemma}\label{lThick}
В слове $W$ с высотой не более $$\Psi(n,4n,l)+ 5s'$$ найд\"{е}тся не менее $s'$ непересекающихся периодических подслов, в которых период повторится не менее $2n$ раз. Кроме того, между любыми двумя элементами данного множества периодических подслов найд\"{е}тся подслово длины периода более левого из выбранных элементов.
\end{lemma}

\subsection{Завершение доказательства субэкспоненциальности высоты}
Подставляя в лемму \ref{lThick} вместо числа $s'$ значение $s$ из доказательства теоремы \ref{ThThick} получаем, что
 высота $W$ не больше, чем
$$\Psi(n,4n,l)+ 5s < 2^{96} l\cdot n^{12\log_3 n + 36\log_3\log_3 n + 91}.$$

Тем самым мы получили {\bf утверждение
основной теоремы \ref{c:main2}}.


\newpage

\chapter{Оценки кусочной периодичности} \label{ch:pp}

\section{План улучшения оценок существенной высоты}

Далее приводятся оценки на количество периодических подслов с периодом длины $2, 3, (n-1)$ произвольного слова $W$,  не являющегося $n$-разбиваемым. Рассмотрение случая периодов длины $2, 3$ при помощи кодировки обобщается до доказательства ограниченности существенной высоты. Кроме того, получена нижняя оценка на число подслов с периодом $2,$ и эта оценка при достаточно большом $l$ отличается от верхней в 4 раза.

С целью дальнейшего улучшения оценок, полученных в главе \ref{ch:height}, вводятся следующие определения:

\begin{definition} \index{Высота!выборочная!малая}
а) Число $h$ называется {\em малой выборочной высотой} с границей $k$
слова $W$ над множеством слов $Z$, если $h$ --- такое максимальное
число, что у слова $W$ найд\"{е}тся $h$ попарно непересекающихся
циклически несравнимых подслов вида $z^m,$ где $z\in Z, m>k$.

б) Число $h$ называется {\em большой выборочной высотой} с границей $k$ \index{Высота!выборочная!большая}
слова $W$ над множеством слов $Z$, если $h$ --- такое максимальное
число, что у слова $W$ найд\"{е}тся $h$ попарно непересекающихся
подслов вида $z^m,$ где $z\in Z, m>k$, прич\"{е}м соседние подслова из
этой выборки несравнимы.

{\bf Здесь и далее:} $k = 2n$.

в) Множество слов $V$ имеет малую (большую) выборочную высоту $h$ \index{Высота!выборочная!множества}
над некоторым множеством слов $Z$, если $h$ является точной
верхней гранью малых (больших) вы\-бо\-роч\-ных высот над $Z$ его
элементов.
\end{definition}

Затем доказываются теоремы \ref{verh}, \ref{niz}, \ref{verh1}, \ref{verh2} на кусочную периодичность:

Теорема \ref{ess:t} с помощью кодировки обобщает теорему \ref{verh} до доказательства ограниченности существенной высоты множества слов, не являющихся $n$-разбиваемыми.

\begin{theorem} \label{ess:t}
Существенная высота $l$-порожд\"{е}нной $\PI$-алгебры с допустимым по\-ли\-но\-ми\-аль\-ным тождеством степени $n$ над множеством слов длины $<n$ меньше, чем $\Upsilon (n, l),$ где
$$\Upsilon (n, l) = 8(l+1)^n n^5(n-1).$$
\end{theorem}

Доказательства теорем \ref{verh2}, \ref{ess:t} изложены в работе \cite{Kh11(2)}.

Малую и большую выборочные высоты связывает следующая теорема:

\begin{theorem} \label{co1}
Большая выборочная высота $l$-порожд\"{е}нной $\PI$-алгебры $A$ с
до\-пус\-ти\-мым полиномиальным тождеством степени $n$ над множеством
ациклических слов длины $k$ меньше $$2(n - 1)\beth(k,l,n),$$ где $\beth(k,l,n)$ --- малая выборочная высота $A$ над множеством
ациклических слов длины $k$.
\end{theorem}

Как и ранее будем считать, что слова строятся над алфавитом $\gA=\{a_1, a_2,\ldots,a_{l}\},$ над которыми введ\"{е}н лексикографический порядок, прич\"{е}м $a_i<a_j$, если $i<j$. Для следующих ниже доказательств будем отождествлять буквы $a_i$ с их индексами $i$ (то есть будем писать не слово $a_ia_j$, а слово $ij$).

\section{Доказательство верхних оценок выборочной высоты}
\subsection{Периоды длины два}

Пусть слово $W$ не является сильно $n$-разбиваемым.  Рассмотрим некоторое
множество $\Omega''$ попарно непересекающихся циклических сравнимых подслов
$W$ вида $z^m$, где $m>2n$, $z$~---~ациклическое двухбуквенное
слово. Будем называть элементы этого множества {\it представителями\index{Представитель!класса эквивалентности},}
имея в виду, что эти элементы являются представителями раз\-лич\-ных
классов эквивалентности по сильной сравнимости. Пусть набралось
$t$ таких пред\-ста\-ви\-те\-лей. Пронумеруем их всех в порядке положения
в слове $W$ (первое --- самое левое) числами от $1$ до $t$. В
каждом выбранном представителе в качестве подслов содержатся ровно
два различных двухбуквенных слова.

Введ\"{е}м порядок на этих словах следующим образом: $u\prec v$, если
\begin{itemize}
    \item $u$ лексикографически меньше $v$,
    \item представитель, содержащий $u$
левее представителя, содержащего $v$.
\end{itemize}
Из отсутствия сильной
$n$-разбиваемости получаем, что максимальное возможное число
попарно несравнимых элементов равно $(n-1)$. По теореме Дилуорса
существует разбиение рассматриваемых двухбуквенных слов на $(n-1)$
цепь. Раскрасим слова в $(n-1)$ цвет в соответствии с их
принадлежностью к цепям.

Введ\"{е}м соответствие между следующими четырьмя
объектами:
\begin{itemize}
    \item натуральными числами от $1$ до $t$,
    \item классами эквивалентности по сильной сравнимости,
    \item содержащимися в классах эквивалентности по сильной сравнимости цик\-ли\-чес\-ки\-ми словами
длины $2$,
    \item парами цветов, в которые
раскрашены сдвиги по циклу этого слово-цикла.\index{Слово!-цикл}
\end{itemize}

Буквы слово-цикла раскрасим в цвета, в которые раскрашены сдвиги по циклу, начинающиеся с этих цветов.

Рассмотрим граф $\Gamma$ \index{Граф!подслов!длины два} с вершинами вида $(k,i)$, где $0<k<n$ и $0<i
\leqslant l$. Первая координата соответствует цвету, а вторая --- букве. Две вершины $(k_1,i_1), (k_2,i_2)$ со\-е\-ди\-ня\-ют\-ся
{\it ребром с весом  $j,$} если
\begin{itemize}
    \item в $j$-ом представителе содержится
слово-цикл из букв $i_1, i_2$,
    \item буквы $j$-го представителя раскрашены в
цвета $k_1, k_2$ соответственно.
\end{itemize}


Посчитаем число р\"{е}бер между вершинами вида $(k_1,i_1)$ и вершинами
вида $(k_2,i_2)$, где $k_1, k_2$ --- фиксированы, $i_1, i_2$ --
произвольны. Рассмотрим два ребра $l_1$ и $l_2$ из рассматриваемого множества с
весами $j_1 < j_2$ с концами в некоторых вершинах $$A=(k_1, i_{1_1}),
B=(k_2,i_{2_1})\text{ и }C=(k_1,i_{1_2}), D=(k_2,i_{2_2})$$ соответственно.
Тогда по построению одновременно выполняются неравенства $ i_{1_1}\le i_{1_2}, i_{2_1}\le  i_{2_2}.$
При этом, так как рассматриваются представители классов
эквивалентности по сильной срав\-ни\-мос\-ти, одно из неравенств
строгое. Значит, $$i_{1_1}+i_{2_1} < i_{1_2}+i_{2_2}.$$
Так как вторые координаты вершин ограничены числом $l$,
то вычисляемое число р\"{е}бер будет не более $(2l-1)$.

Так как первая координата вершин меньше $n$, то всего р\"{е}бер в
графе будет не более $${(2l-1)(n-1)(n-2)\over2}.$$ Таким образом, теорема
\ref{verh} доказана.

\subsection{Периоды длины три}

Пусть слово $W$ не является сильно
$n$-разбиваемым.  Рассмотрим некоторое множество попарно
непересекающихся циклических несравнимых подслов слова $W$ вида $z^m$,
где $m>2n$, $z$ --- ациклическое тр\"{е}хбуквенное слово. Будем называть
элементы этого множества представителями, имея в виду, что эти
элементы являются представителями различных классов
эквивалентности по сильной сравнимости. Пусть набралось $t$
таких пред\-ста\-ви\-те\-лей. Пронумеруем их всех в порядке положения в
слове $W$ (первое --- ближе всех к началу слова) числами от $1$ до $t$. В каждом
выбранном представителе в качестве подслов содержатся ровно три
различных тр\"{е}хбуквенных слова.

Введ\"{е}м порядок на этих словах следующим образом:

$u\prec v$, если
\begin{itemize}
    \item $u$ лексикографически меньше $v$,
    \item представитель, содержащий $u$,
левее представителя, содержащего $v$.
\end{itemize}
 Из отсутствия сильной
$n$-разбиваемости получаем, что максимальное возможное число
попарно несравнимых элементов равно $n-1$. По теореме Дилуорса
существует разбиение рассматриваемых тр\"{е}хбуквенных слов на $(n -
1)$ цепь. Раскрасим слова в $(n-1)$ цвет в соответствии с их
принадлежностью к цепям.

Можно заметить, что до этого момента доказательство теоремы \ref{verh1} практически полностью повторяет доказательство теоремы \ref{verh}. Однако для дальнейшего доказательства необходимо использовать ориентированный
аналог графа $\Gamma,$ который вводится далее.

Рассмотрим теперь уже ориентированный граф $G$ \index{Граф!подслов!длины три} с вершинами вида $(k,i)$, где
$0 < k < n$ и $0 < i \leqslant l$. Первая координата обозначает цвет, а вторая --- букву.
{\it Ребро с некоторым весом $j$}
выходит из $(k_1, i_1)$ в $(k_2, i_2)$, если для некоторых $i_3, k_3$
\begin{itemize}
    \item в $j$-ом представителе содержится слово-цикл $i_1 i_2 i_3$,
    \item буквы $i_1, i_2, i_3$ $j$-го представителя раскрашены в цвета $k_1, k_2, k_3$ соответственно.
\end{itemize}
Таким образом, граф $G$ состоит из ориентированных треугольников с
р\"{е}бра\-ми одинакового веса. Однако в отличие от графа $\Gamma$ из доказательства теоремы \ref{verh}, могут
появляться кратные р\"{е}бра, то есть р\"{е}бра с общими началом и концом, но разным весом. Для дальнейшего доказательства нам
потребуется следующая лемма:

\begin{lemma}[Основная] \label{ess:lB}
Пусть $A$, $B$ и $C$ --- вершины графа $G$, $A\to B\to C\to A$~---
ориентированный треугольник с р\"{е}брами некоторого веса $j$, кроме того,
су\-щест\-ву\-ют другие р\"{е}бра $A\to B, B\to C, C\to A$ с весами $a, b,
c$ соответственно. Тогда среди $a, b, c$ есть число, большее $j$.
\end{lemma}

\begin{proof} От противного. Если два числа из набора $a, b, c$ равны друг другу,
то  $$a = b = c = j,$$ так как в противном случае есть 2 треугольника $$A\to B\to C\to A,$$ в каждом из которых веса всех тр\"{е}х р\"{е}бер совпадают между собой. Тогда в $\Omega''$ есть два не сильно сравнимых слова, что противоречит определению $\Omega''.$ Без ограничения общности, что $a$ наибольшее из чисел
$a, b и с$. Рассмотрим треугольник из р\"{е}бер веса $a.$ Этот треугольник будет иметь общую с $\triangle ABC$ сторону $AB$ и некоторую третью вершину $C'.$ Если вторая координата вершины $C'$ совпадает со второй координатой вершины $C$ (то есть совпали соответствующие $C$ и $C'$ буквы алфавита), то $\triangle ABC$ и $\triangle ABC'$ соответствуют не сильно сравнимым словам из множества $\Omega''.$ Снова получено противоречие с определением множества $\Omega''.$ По предположению $a<j,$ а значит, из монотонности цвета $k_A$ (первой координаты вершины $A$) слово $i_A i_B i_{C'},$ составленное из вторых координат вершин $A, B, C'$ соответственно, лексикографически меньше слова $i_A i_B i_C.$
Значит, $i_{C'} < i_C.$ Тогда слово $i_B i_{C'}$ лексикографически меньше слова $i_B i_{C}.$ Из монотонности цвета $k_B$ получаем, что $b>a.$ Противоречие.
\end{proof}
\medskip

\subsection{Завершение доказательства теоремы \ref{verh1}}

Рассмотрим теперь граф $G_1$, полученный из графа $G$ заменой
между каждыми двумя вершинами кратных р\"{е}бер на ребро с наименьшим
весом. Тогда по лемме \ref{ess:lB} в графе $G_1$ встретятся р\"{е}бра всех весов от 1 до
$t$.

Посчитаем число р\"{е}бер из вершин вида $(k_1, i_1)$ в вершины вида
$(k_2, i_2)$, где $k_1, k_2$ фиксированы, $i_1, i_2$
произвольны. Рассмотрим два ребра из рассматриваемого мно\-жест\-ва с
весами $j_1 < j_2$ с концами в некоторых вершинах $$(k_1, i_{1_1}),
(k_2, i_{2_1})\text{ и }(k_1,i_{1_2}),(k_2,i_{2_2})$$ соответственно.
Тогда по построению $ i_{1_1}\leqslant i_{1_2}, i_{2_1}\leqslant i_{2_2}$,
прич\"{е}м, так как рас\-смат\-ри\-ва\-ют\-ся представители классов
эквивалентности по сильной сравнимости, одно из неравенств
строгое. Так как вторые координаты вершин ограничены числом $l$,
то вычисляемое число р\"{е}бер будет не более $2l-1$.

Так как первая координата вершин меньше $n$, то всего р\"{е}бер в
графе будет не более $$(2l-1)(n-1)(n-2).$$ Таким образом, теорема
\ref{verh1} доказана.

\subsection{Периоды длины, близкой к степени тождества в алгебре} \label{ss:big_periods}

Пусть слово $W$ не является $n$-разбиваемым. Как и прежде, рассмотрим некоторое множество попарно непересекающихся несравнимых подслов слова $W$ вида $z^m,$ где $m>2n,$ $z$ --- $(n-1)$-буквенное ациклическое слово.  Будем называть
элементы этого множества {\it представителями}, имея в виду, что эти
элементы являются представителями раз\-лич\-ных классов
эквивалентности по сильной сравнимости. Пусть набралось $t$
таких предс\-та\-ви\-те\-лей. Пронумеруем их всех в порядке положения в
слове $W$ (первое --- ближе всех к началу слова) числами от $1$ до $t$. В каждом выбранном представителе в качестве подслов содержатся ровно $(n-1)$ различное $(n-1)$-буквенное слово.

Введ\"{е}м порядок на этих словах следующим образом: $u\prec v$, если
\begin{itemize}
    \item $u$ лексикографически меньше $v$;
    \item представитель, содержащий $u$ левее представителя, содержащего $v$.
\end{itemize}
Из отсутствия сильной
$n$-разбиваемости получаем, что максимальное возможное число
попарно несравнимых элементов равно $n-1$. По теореме Дилуорса существует разбиение рассматриваемых $(n-1)$-буквенных слов на $(n-1)$ цепь. Раскрасим слова в $(n-1)$ цвет в соответствии с их принадлежностью к цепям. Раскрасим позиции, с которых начинаются слова, в те же цвета, что и соответствующие слова.

Рассмотрим ориентированный граф $G$ с вершинами вида $(k,i)$, где \index{Граф!подслов!большой длины}
$0 < k < n$ и $0 < i \leqslant l$. Первая координата обозначает цвет, а вторая --- букву.
\begin{definition}
Ребро с некоторым весом $j$
выходит из $(k_1, i_1)$ в $(k_2, i_2)$, если
\begin{itemize}
    \item для некоторых $$i_3,i_4,\ldots,i_{n-1}$$ в $j$-ом представителе содержится слово-цикл $$i_1i_2\cdots i_{n-1};$$
    \item позиции, на которых стоят буквы $i_1, i_2$ раскрашены в цвета $k_1, k_2$ соответственно.
\end{itemize}
\end{definition}
Таким образом, граф $G$ состоит из ориентированных циклов длины $(n-1)$ с р\"{е}брами одинакового веса. Теперь нам требуется найти показатель, который бы строго монотонно рос с появлением каждого нового представителя при движении от начала к концу слова $W.$ В теореме \ref{verh1} таким показателем было число несократимых р\"{е}бер графа $G.$ В доказательстве теоремы \ref{verh2} будет рассматриваться сумма вторых координат неизолированных вершин графа $G.$ Нам потребуется следующая лемма:

\begin{lemma}[Основная]\label{n-1:l}
Пусть $A_1, A_2,\ldots,A_{n-1}$ --- вершины графа $G,$ $A_1\to A_2\to\cdots\to A_{n-1}\to A_1$ --- ориентированный цикл длины $(n-1)$ с р\"{е}брами некоторого веса $j.$ Тогда не найд\"{е}тся другого цикла между вершинами $A_1, A_2,\ldots,A_{n-1}$ одного веса.
\end{lemma}
 \begin{proof}От противного. Рассмотрим наименьшее число $j$, для которого наш\"{е}лся другой одноцветный цикл между вершинами цикла цвета $j.$ В силу минимальности $j$ можно считать, что этот цикл имеет цвет $k>j.$ Пусть цикл цвета $k$ имеет вид $$A_{j_1}\to A_{j_2}\to\cdots\to A_{j_{n-1}},\text{ где }{\{j_p\}}_{p=1}^{n-1} = \{1, 2,\ldots,n-1\}.$$ Пусть $(k_j, i_j)$ --- координата вершины $A_j.$ Рассмотрим наименьшее число $q\in \mathbb N$ такое, что для некоторого $r$ слово
$$i_{j_r}i_{j_{r+1}}\cdots i_{j_{r+q-1}}$$ лек\-си\-ко\-гра\-фи\-чес\-ки больше слова $$i_{j_r}i_{j_r+1}\cdots i_{j_r+q-1}$$ (здесь и далее сложение нижних индексов происходит по модулю $(n-1)$). Такое $q$ существует, так как слова $i_1i_2\cdots i_{n-1}$ и $i_{j_1}i_{j_2}\cdots i_{j_{n-1}}$  сильно сравнимы. Кроме того, в силу совпадения множеств  ${\{j_p\}}_{p=1}^{n-1}$ и $ \{1, 2,\ldots,n-1\}$ получаем, что $q\geqslant 2.$ Так как $q$ --- наименьшее, то для любого $ s<q,$ любого $r$ имеем $$i_{j_r}i_{j_{r+1}}\cdots i_{j_{r+s-1}} = i_{j_r}i_{j_r+1}\cdots i_{j_r+s-1}.$$ Тогда для любого $ s<q,$ любого $r$ имеем $$i_{j_{r+s-1}}= i_{j_r+s-1}.$$ Из монотонности слов каждого цвета получаем, что для любого $r$ $$i_{j_r}i_{j_{r+1}}\cdots i_{j_{r+q-1}}$$ не больше $$i_{j_r}i_{j_r+1}\cdots i_{j_r+q-1}.$$ Значит, для любого $r$ верно неравенство $$i_{j_{r+q-1}}\geqslant i_{j_r+q-1}.$$ По предположению найд\"{е}тся такое $r$, что $$i_{j_{r+q-1}}> i_{j_r+q-1}.$$ Так как обе последовательности ${\{j_{r+q-1}\}}_{к=1}^{n-1}$ и  ${\{j_r+q-1\}}_{к=1}^{n-1}$ пробегают элементы множества чисел $\{1, 2,\ldots,n-1\}$ по одному разу, то $$\sum\limits_{r=1}^{n-1}j_{r+q-1} = \sum\limits_{r=1}^{n-1}(j_r+q-1)$$ (при вычислении числа $j_r+q-1$ суммирование также проходит по модулю $(n-1)$). Но мы получили $$\sum\limits_{r=1}^{n-1}j_{r+q-1} > \sum\limits_{r=1}^{n-1}(j_r+q-1).$$ Противоречие. \end{proof}
\medskip

\subsection{Завершение доказательства теоремы \ref{verh2}}

Для произвольного $j$ рассмотрим циклы длины $(n-1)$ весов $j$ и $j+1$. Из основной леммы \ref{n-1:l} найдутся числа $k, i$ такие, что вершина $(k,i)$ входит в цикл веса $(j+1),$ но не входит в цикл веса $j.$ Пусть цикл веса $j$ состоит из вершин вида $(k, i_{(j,k)}),$ где $k=1,2,\ldots,n-1.$ Введ\"{е}м величину $$\pi(j) = \sum\limits_{k=1}^{n-1}i_{(j,k)}.$$ Тогда из основной леммы \ref{n-1:l} и монотонности слов по цветам получаем, что $$\pi(j+1)\geqslant\pi(j)+1.$$ Так как рассматриваемые периоды не циклические, то найд\"{е}тся $k$ такое, что $i_{(1,k)}>1.$ Значит, $\pi(1)>n-1.$ Для любого $j$ имеем $i_{(j,k)}\leqslant l-1,$ а значит, $$\pi(j)\leqslant (l-1)(n-1).$$ Следовательно, $$j\leqslant (l-2)(n-1).$$ Значит, $$t\leqslant (l-2)(n-1).$$ Тем самым, теорема \ref{verh2} доказана.

Представленная при доказательстве теоремы \ref{verh2} техника позволяет доказать следующее утверждение:

\begin{proposition}
Малая выборочная высота множества не слов, не являющихся сильно $n$-разбиваемыми,
над $l$-буквенным алфавитом относительно множества ациклических
слов длины $(n-c)$ не больше $D(c) n^c l,$ где $D(c)$ --- некоторая функция, зависящая от $c$.
\end{proposition}

\section{Нижняя оценка малой выборочной высоты над периодами длины два}

Привед\"{е}м пример. Из
формулировки этой теоремы следует, что можно положить $l$ сколь
угодно большим. Будем считать, что $l>2^{n-1}$. Мы воспользуемся кон\-струк\-ци\-я\-ми, принятыми в
доказательстве теоремы \ref{verh}. Таким образом, процесс
построения примера сводится к построению р\"{е}бер в графе на $l$
вершинах. Разобь\"{е}м этот процесс на несколько больших шагов. Пусть $i$ --- натуральное число от 1 до $(l - 2^{n - 1})$.
Пусть
на $i$-ом большом шаге в привед\"{е}нном ниже порядке соединяются
р\"{е}брами следующие пары вершин:\\
$(i,2^{n-2}+i)$,\\
$(i,2^{n-2}+2^{n-3}+i),(2^{n-2}+i,2^{n-2}+2^{n-3}+i)$,\\
$(i,2^{n-2}+2^{n-3}+2^{n-4}+i),(2^{n-2}+i,2^{n-2}+2^{n-3}+2^{n-4}+i),\\
(2^{n-2}+2^{n-3}+i,2^{n-2}+2^{n-3}+2^{n-4}+i),\ldots,$\\
$(i,2^{n-2}+\cdots+2^1+2^0+i),\ldots,(2^{n-2}+\cdots+2^1+i,2^{n-2}+\ldots+2^1+2^0+i), $\\
 где $i=2,3,\ldots, l-2^{n-1}+1.$\\
 При этом:
\begin{enumerate}
\item Никакое ребро не будет подсчитано 2 раза, так как вершина
соединена р\"{е}брами только с вершинами, значения в которых
отличаются от значения в выбранной вершине на неповторяющуюся
сумму степеней двойки.

\item Пусть {\it вершина типа} $(k,i)$ --- вершина, которая на $i$-ом
шаге соединяется с $k$ вершинами, значения в которых меньше
значения е\"{е} самой. Для всех $i$ будут вершины типов $(0,i),
(1,i)\ldots,(n-2,i)$.

Раскрасим в $k$-й цвет слова, которые для некоторого $i$
начинаются с буквы типа $(k,i)$ и заканчиваются в буквах, с
которыми вершина типа $(k, i)$ со\-е\-ди\-ня\-ет\-ся р\"{е}брами на $i$-ом
большом шаге. Получена корректная раскраска в $(n-1)$ цвет, а
значит, слово сильно $n$-разбиваемо.

\item На $i$-ом большом шаге осуществляется $(n-2)(n-3)\over 2$
шагов. Значит, $$q=(l-2^{n-1})(n-2)(n-3)/2,$$ где $q$ --- количество р\"{е}бер в графе $\Gamma.$
\end{enumerate}
Тем самым, теорема \ref{niz} доказана.

\section{Оценка существенной высоты с помощью выборочной высоты} \label{s:cod}
Из рассмотрения случая периодов длины 2 с помощью кодировки букв можно получить оценку на существенную высоту, которая будет расти полиномиально по числу об\-ра\-зу\-ю\-щих и экспоненциально по степени тождества. Для этого надо обобщить некоторые понятия, введ\"{е}нные ранее. Заметим, что механизм кодировки букв представляется перспективным для обобщения оценок на высоту, полученных при конкретном значении одного из параметров (в данном случае --- ограничение длины слов в базисе Ширшова).

\begin{construction}
Рассмотрим алфавит $\gA$ с буквами $\{a_1, a_2,\ldots, a_l\}.$ Введ\"{е}м на буквах лексикографический порядок: $a_i>a_j,$ если $i>j.$ Рассмотрим произвольное множество ациклических попарно сильно сравнимых слово-циклов некоторой оди\-на\-ко\-вой длины $t.$ Пронумеруем элементы этого множества натуральными числами, начиная с 1. Введ\"{е}м порядок на словах, входящих в слово-цикл, следующим образом:
$u\prec v,$ если:
\begin{enumerate}
    \item слово $u$ --- лексикографически меньше слова $v,$
    \item слово-цикл, содержащий слово $u,$ имеет меньший номер, чем слово-цикл, со\-дер\-жа\-щий слово $v.$
\end{enumerate}
Пронумеруем теперь позиции букв в слово-циклах числами от 1 до $t$ от начала к концу некоторого слова, входящего в слово-цикл.
\begin{notation}
\begin{enumerate}
    \item Пусть $w(i, j)$ --- слово длины $t$, которое начинается с $j$-ой буквы в $i$-ом слово-цикле.
    \item Пусть класс $X(t, l)$ --- рассматриваемое множество слово-циклов с введ\"{е}нным на его словах порядком $\prec.$
\end{enumerate}
\end{notation}

\end{construction}

\begin{definition}
Назов\"{е}м те классы $X,$ в которых не найд\"{е}тся антицепи длины $n$, --- {\em $n$-светлыми.} Соответственно, те, в которых найд\"{е}тся такая антицепь --- {\em $n$-т\"{е}мными.}
\end{definition}

Из теоремы Дилуорса получаем, что слова в $n$-светлых классах $X$ можно рас\-кра\-сить в $(n-1)$ цвет, так что одноцветные слова образуют цепь. Далее требуется оценить число элементов в $n$-светлых классах $X$.

\begin{definition}
Пусть $\beth(t, l, n)$ --- наибольшее возможное число элементов в $n$-светлом классе $X(t, l).$
\end{definition}
\begin{remark}\label{ess:r}
Здесь и далее первый аргумент в функции $\beth(\cdot, \cdot, \cdot)$ меньше третьего.
\end{remark}
Следующая лемма позволяет оценить $\beth(t, l, n)$ через случаи малых периодов.
\begin{lemma}\label{ess:l1}
$\beth(t, l^2, n)\geqslant\beth(2t, l, n)$
\end{lemma}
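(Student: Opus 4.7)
План доказательства состоит в построении по произвольному $n$-слоистому подмножеству $S \subseteq X(2t, l)$ мощности $\beth(2t, l, n)$ подмножества $S' \subseteq X(t, l^2)$ той же мощности, также $n$-слоистого. Основная идея --- <<склейка>> пар смежных позиций (меток) в каждом супер-слове с одновременным расширением алфавита с $l$ до $l^2$ букв за счёт кодирования пар букв одной буквой-парой. Поскольку размер нового алфавита в точности равен количеству упорядоченных пар букв исходного, такое кодирование допускает однозначное восстановление.

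А именно, введём на расширенном алфавите $\gA' = \gA \times \gA$ лексикографический порядок: $(a_i, a_j) \prec (a_{i'}, a_{j'})$ тогда и только тогда, когда $i < i'$ либо $i = i'$ и $j < j'$. Каждому супер-слову семейства сопоставим супер-слово, в котором пары букв с метками $2k - 1$ и $2k$ заменяются одной буквой-парой с меткой $k$, при этом номер супер-слова в семействе сохраняется. Так возникает отображение $\phi: X(2t, l) \to X(t, l^2)$, и мы полагаем $S' := \phi(S)$; по построению $|S'| = |S|$.

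Основным содержательным шагом будет доказательство $n$-слоистости $S'$. Предположим противное: пусть в $S'$ имеется возрастающая цепь длины $n$, заданная строго лексикографически возрастающими парами $(x_1, y_1) \prec \cdots \prec (x_n, y_n)$ в супер-словах с возрастающими номерами. По этой цепи мы построим возрастающую цепь длины $n$ уже в $S$, последовательно выбирая на каждом шаге одну из двух координат пары так, чтобы одновременно сохранялось как строгое возрастание буквенных значений, так и строгое возрастание номеров супер-слов. Ключевое наблюдение здесь --- что на каждом переходе между соседними парами либо первая координата строго возрастает, либо первые координаты совпадают и вторая строго возрастает; это свойство и используется при подъёме цепи.

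Получающееся противоречие с $n$-слоистостью $S$ покажет, что $S'$ $n$-слоисто, откуда $\beth(t, l^2, n) \geqslant |S'| = |S| = \beth(2t, l, n)$. Основной технической сложностью представляется именно согласованный выбор координаты пары на всех шагах подъёма цепи --- особенно при переходах между <<плато>> постоянства первой координаты, где необходимо аккуратно учесть возможные несовпадения координат с уже выбранной буквой на предыдущем шаге; этот разбор случаев опирается на сочетание свойств лексикографического порядка и структуры супер-слов в семействе.
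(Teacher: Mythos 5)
Your encoding construction is the same as the paper's: you glue adjacent pairs of positions and encode ordered pairs of letters by single letters of an alphabet of size $l^2$ ordered lexicographically over pairs, and the counting step $|S'|=|S|$ together with the final inequality is fine. The gap sits exactly where you yourself locate the ``main technical difficulty'': the lifting of a length-$n$ chain from $S'$ back to $S$ by choosing one of the two coordinates of each pair. First, the elements of a chain in $X(t,l^2)$ are not single pair-letters but words $w(i,j)$ of length $t$ over the pair alphabet, compared lexicographically; second, and more importantly, the coordinate-selection lifting is impossible in general. Counterexample: for the lexicographically increasing pairs $(1,5)\prec(1,6)\prec(2,1)\prec(2,2)$ (sitting in super-words with increasing numbers) there is no strictly increasing sequence of length $4$ obtained by picking one coordinate from each pair, since the last term would have to be at most $2$. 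So the case analysis at the transitions between plateaus of constant first coordinate, which you postpone, cannot be carried out, and the contradiction you aim for is not reached.

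The correct argument requires no selection at all. Because the order on the pair-letters $b_{i,j}$ is defined by $b_{i_1,j_1}>b_{i_2,j_2}$ exactly when $i_1 l+j_1>i_2 l+j_2$, i.e. it coincides with the lexicographic order on the two-letter blocks $a_{i}a_{j}$, the lexicographic comparison of two length-$t$ words over the pair alphabet coincides with the lexicographic comparison of their decodings, which are length-$2t$ words over the original alphabet; the super-word numbers are untouched by the decoding. Hence a chain $w(i_1,j_1)\prec\cdots\prec w(i_n,j_n)$ in $X(t,l^2)$ decodes verbatim to a chain $w(i_1,j'_1)\prec\cdots\prec w(i_n,j'_n)$ in the $n$-layered family $X(2t,l)$, a contradiction. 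This direct order-preserving decoding is the route the paper takes, and it is what your proof needs in place of the coordinate selection.
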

\begin{proof}Рассмотрим $n$-светлый класс $X(2t, l).$ Разобь\"{е}м во всех его слово-циклах позиции на пары соседних так, чтобы каждая позиция попала ровно в одну пару. Затем рассмотрим алфавит $\gB$ с буквами $\{b_{i,j}\}_{i,j=1}^l,$ прич\"{е}м $b_{i_1, j_1}>b_{i_2,j_2}, $ если $$i_1\cdot l+j_1>i_2\cdot l+j_2.$$ Алфавит $\gB$ состоит из $l^2$ букв. Каждая пара позиций из разбиения состоит из некоторых букв $a_i, a_j$. Заменим пару букв $a_i, a_j$ буквой $b_{i,j}.$ Поступая так с каждой парой, получаем новый класс $X(t, l^2).$ Он будет $n$-светлым, так как если в классе $X(t, l^2)$ есть антицепь длины $n$ из слов $$w(i_1, j_1), w(i_2, j_2),\ldots,w(i_n, j_n),$$ то следует рассматривать прообразы слов $$w(i_1, j_1), w(i_2, j_2),\ldots,w(i_n, j_n)$$ в первоначально взятом классе $X(2t, l).$ Пусть эти прообразы --- слова $$w(i_1, j'_1), w(i_2, j'_2),\ldots,w(i_n, j'_n).$$ Тогда слова $$w(i_1, j'_1), w(i_2, j'_2),\ldots,w(i_n, j'_n)$$ образуют в классе $X(2t, l)$ антицепь длины $n$. Получено противоречие с тем, что класс $X(2t, l)$ --- $n$-светлый. Тем самым, лемма доказана.\end{proof}

Теперь оценим $\beth(t, l, n)$ через случаи малых алфавитов.

\begin{lemma}
$\beth(t, l^2, n)\leqslant\beth(2t, l, 2n-1)$
\end{lemma}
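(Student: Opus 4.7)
The natural strategy is to mirror the previous lemma (the inequality $\beth(t, l^2, n) \geqslant \beth(2t, l, n)$), but in the reverse direction: instead of \emph{gluing} pairs of letters, we \emph{split} each letter $b_{i,j}$ of the alphabet $\gB$ into the consecutive pair $a_i a_j$. Starting from an $n$-thin group $X(t, l^2)$ that attains $\beth(t, l^2, n)$ distinct sub-words, this splitting produces a group $X(2t, l)$ whose sub-words have length $2t$ over the alphabet $\gA$ of $l$ letters, and the map ``split'' is injective on sub-words, so the two groups have the same number of distinct sub-words. The whole proof then reduces to showing that $X(2t, l)$ is $(2n-1)$-thin; once this is established, the definition of $\beth$ gives $\beth(2t, l, 2n-1) \geqslant \beth(t, l^2, n)$.

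To show $(2n-1)$-thinness of $X(2t, l)$, I argue by contradiction. Suppose it contains a forbidden pattern of length $2n-1$: positions $w(\alpha_1,\beta_1),\ldots,w(\alpha_{2n-1},\beta_{2n-1})$ in distinct sub-words with $\alpha_1 < \cdots < \alpha_{2n-1}$, realizing the relevant monotone behaviour with respect to the order $\prec$ on $\gA$. Each $\beta_s$ is either odd or even: an odd $\beta_s = 2\gamma_s - 1$ corresponds to the ``left half'' of the original letter $b_{i_s,j_s}$ sitting at position $(\alpha_s,\gamma_s)$ of $X(t,l^2)$ (the carried letter of $\gA$ being $a_{i_s}$), while an even $\beta_s = 2\gamma_s$ corresponds to its ``right half'' (carrying $a_{j_s}$). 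By the pigeonhole principle, at least $n$ of the $2n-1$ positions share the same parity.

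In the odd case, the selected $n$ positions give a strictly monotone sequence of $i$-components. Since the order on $\gB$ is $b_{i,j} \leftrightarrow i\cdot l + j$ and each strict step in $i$ contributes at least $l$ while any fluctuation in $j$ contributes less than $l$, the corresponding $b$-letters are themselves strictly monotone in the same direction; this yields a forbidden pattern of length $n$ in the original $n$-thin $X(t, l^2)$, a contradiction. In the even case, the selected $n$ positions give a strictly monotone sequence of $j$-components, and one passes back to $\gB$ by the same arithmetic: the lex-order on $\gB$ is chosen precisely so that, when the selected positions all come from the same parity class, the shared monotone coordinate dominates the lex decision, and the pullback to the original group again produces a forbidden length-$n$ pattern.

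The main technical obstacle is the even-parity case: the lex-order on $\gB$ is weighted toward the first coordinate, so decreasing $j$'s do not automatically translate into decreasing $b$-values without controlling the $i$-coordinates of the same positions. This is exactly the point at which the bound $2n-1$ (rather than $2n$) is needed: the parity pigeonhole allows us to throw away the ``wrong-parity'' position and keep the remaining $n$ in a single class, so that the coordinate being tracked in $\gA$ is the coordinate that determines the comparison in $\gB$; the full argument consists in making this matching rigorous and verifying that the remaining freedom in the untracked coordinate cannot destroy the strict inequality.
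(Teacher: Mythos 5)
Your overall route is the same as the paper's: split each letter $b_{i,j}$ of $\gB$ into the consecutive pair $a_i a_j$, note that the number of pairwise distinct sub-words is preserved, and then apply the pigeonhole principle to the $2n-1$ positions of a hypothetical pattern in $X(2t,l)$ to extract $n$ positions of equal parity. The odd-parity case is sound: a tail beginning at an odd position reads $a_{i_\gamma}a_{j_\gamma}a_{i_{\gamma+1}}a_{j_{\gamma+1}}\cdots$, and blockwise lexicographic comparison of these pairs is exactly comparison of the letters $b_{i,j}$ under the order $i\cdot l+j$, so the $n$ odd positions pull back to a pattern of length $n$ in the original $n$-thin group $X(t,l^2)$, giving the desired contradiction.

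The even-parity case, which you yourself flag as ``the main technical obstacle,'' is a genuine gap, and the heuristic you offer for it is incorrect. First, under the order $b_{i_1,j_1}>b_{i_2,j_2}\Leftrightarrow i_1l+j_1>i_2l+j_2$ the coordinate that dominates the comparison is $i$, not $j$: for example $b_{2,1}>b_{1,2}$ although $1<2$ in the $j$-coordinate, so a strictly monotone sequence of $j$-components yields no monotonicity of the corresponding letters of $\gB$. Second, and more fundamentally, a tail beginning at an even position $2\gamma$ reads $a_{j_\gamma}a_{i_{\gamma+1}}a_{j_{\gamma+1}}\cdots$, i.e.\ the $j$-half of one letter of $\gB$ followed by the $i$-half of the \emph{next} one; lexicographic comparison of such tails corresponds to comparison inside the group obtained by regluing the split word with the pairing shifted by one position, which is a \emph{different} group of type $(t,l^2)$, and the assumed $n$-thinness of the original group produces no contradiction there. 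The paper's proof avoids this by choosing the parity of the pairing \emph{after} locating the $n$ equal-parity positions --- it glues so that each selected position becomes the first element of its pair, which reduces the even case to the odd one. In your formulation the splitting, and hence the pairing, is fixed in advance, so you must either build in the same adaptive choice or prove separately that the shifted regluing of an $n$-thin group is again $n$-thin; as written, the even case does not close.
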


\begin{proof}Рассмотрим $(2n-1)$-т\"{е}мный класс $X(2t, l).$ Мож\-но счи\-тать, что $n$ слов из анти\-цепи, а имен\-но $$w(i_1, j_1), w(i_2, j_2),\ldots,w(i_n, j_n),$$ на\-чи\-на\-ют\-ся с не\-ч\"{е}т\-ных по\-зи\-ций сло\-во-цик\-лов. Ра\-зо\-бь\"{е}м во всех его сло\-во-цик\-лах по\-зи\-ции на па\-ры со\-сед\-них так, что\-бы каж\-дая по\-зи\-ция по\-па\-ла ров\-но в од\-ну па\-ру и пер\-вая по\-зи\-ция в каж\-дой па\-ре бы\-ла не\-ч\"{е}т\-ной. За\-тем рас\-смот\-рим ал\-фа\-вит $\gB$ с бук\-ва\-ми $\{b_{i,j}\}_{i,j=1}^l,$ при\-ч\"{е}м $b_{i_1, j_1}>b_{i_2,j_2},$ ес\-ли $$i_1\cdot l+j_1>i_2\cdot l+j_2.$$ Ал\-фа\-вит $\gB$ сос\-то\-ит из $l^2$ букв. Каж\-дая па\-ра по\-зи\-ций из раз\-би\-е\-ния сос\-то\-ит из не\-ко\-то\-рых букв $a_i, a_j$. За\-ме\-ним па\-ру букв $a_i, a_j$ бук\-вой $b_{i,j}.$ Пос\-ту\-пая так с каж\-дой па\-рой, по\-лу\-ча\-ем но\-вый класс $X(t, l^2).$ Пусть сло\-ва $$w(i_1, j_1), w(i_2, j_2),\ldots,w(i_n, j_n)$$ пе\-ре\-шли в сло\-ва $$w(i_1, j'_1), w(i_2, j'_2),\ldots,w(i_n, j'_n).$$ Эти сло\-ва бу\-дут об\-ра\-зо\-вы\-вать анти\-цепь дли\-ны $n$ в классе $X(t, l^2).$ Та\-ким об\-ра\-зом, по\-лу\-чен $n$-т\"{е}м\-ный класс $X(t, l^2)$ с тем же чис\-лом эле\-мен\-тов, что и $(2n-1)$-т\"{е}м\-ный класс $X(2t, l).$ Тем са\-мым, лемма до\-ка\-за\-на.\end{proof}

Для дальнейшего рассуждения необходимо связать $\beth(t, l, n)$ для произвольного первого аргумента и для первого аргумента, равного степени двойки.

\begin{lemma}\label{ess:l3}
$\beth(t, l, n)\leqslant\beth(2^s, l+1, 2^s (n-1)+1),$ где $s = \ulcorner\log_2(t)\urcorner.$
\end{lemma}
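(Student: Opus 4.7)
The plan is to construct an injection $\phi : X(t, l) \hookrightarrow X(2^s, l+1)$ by padding each word of length $t$ with $(2^s - t)$ copies of the new alphabet symbol $a_{l+1}$, placed at a fixed position so that the lexicographic order on $\{a_1, \ldots, a_l\}^t$ is preserved under $\phi$, and by regrouping the original suffix-chains of length $t$ into suffix-chains of the new length $2^s$. The map is injective since the original word can be recovered by deleting the padded positions.

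Given an $n$-divisible subset $S \subseteq X(t, l)$ of size $N = \beth(t, l, n)$, I would invoke Dilworth's theorem, already used earlier in this chapter, which converts the absence of antichains of length $n$ into a decomposition of $S$ into at most $n-1$ chains under the order $\prec$. The central claim of the proof is that under $\phi$, the image of each such $\prec$-chain decomposes into at most $2^s$ chains under the new order $\prec'$ on $X(2^s, l+1)$. Granting this, $\phi(S)$ is covered by at most $2^s(n-1)$ chains in $\prec'$, and by Dilworth applied in the reverse direction, $\phi(S)$ contains no antichain of length $2^s(n-1) + 1$, so it is $(2^s(n-1)+1)$-divisible. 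Hence $N = |\phi(S)| \leq \beth(2^s, l+1, 2^s(n-1)+1)$, which is exactly the desired inequality.

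The main obstacle is proving the $2^s$-factor bound on how a $\prec$-chain fragments under the embedding. The exponent $s = \ulcorner \log_2 t \urcorner$ strongly suggests a dyadic argument: one iteratively bisects the block of padding positions, or equivalently, traces $s$ binary decisions encoded by the placement of the new symbol $a_{l+1}$ within each padded word. At each of the $s$ halving steps a chain can at most double, yielding the total factor $2^s$. Carrying out this bookkeeping carefully --- in particular, verifying that the regrouping of suffix-chains and the insertion of the new symbol can disrupt the chain relation only along these $s$ controlled binary splits, and never in a way that produces more than $2^s$ independent pieces --- is the principal technical work of the argument. The role of enlarging the alphabet from $l$ to $l+1$ is precisely to provide a dedicated ``marker'' symbol $a_{l+1}$ that encodes these dyadic decisions without interfering with the lexicographic comparison of the original letters.
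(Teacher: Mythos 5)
Your first step --- padding each cycle with $2^s-t$ copies of a new letter so as to embed $X(t,l)$ into $X(2^s,l+1)$ --- coincides with the paper's (the paper appends a letter $a_0$ that is lexicographically \emph{smaller} than all of $a_1,\dots,a_l$ on positions $t+1,\dots,2^s$). From there, however, your argument has two genuine defects. First, a chain/antichain confusion: by the definition in this section a set is ``$n$-broken'' when it contains no \emph{chain} of length $n$, so what must be shown is that the padded set contains no chain of length $2^s(n-1)+1$. Your scheme covers $\phi(S)$ by $2^s(n-1)$ chains and concludes that it has no \emph{antichain} of size $2^s(n-1)+1$; that is the wrong conclusion, and the Dilworth direction you invoke at the outset is also reversed (absence of chains of length $n$ yields a partition into $n-1$ antichains, not into $n-1$ chains). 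Second, the mechanism you propose for the factor $2^s$ --- $s$ successive dyadic bisections of the padding block, each at most doubling the number of pieces --- is not the actual source of that factor, and you leave it unproved, yourself calling it the principal technical work.

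The paper's proof needs no decomposition at all. Every element of a chain in $X(2^s,l+1)$ is a word $w(i,j)$ with cyclic starting position $j\in\{1,\dots,2^s\}$, so a chain of length $2^s(n-1)+1$ contains, by pigeonhole over the $2^s$ values of $j$, some $n$ words sharing one value of $j$; if $j\leqslant t$ these pull back to a chain $w(i_1,j)\prec\cdots\prec w(i_n,j)$ in $X(t,l)$, and if $j>t$ (the start lies inside the padding block) one passes to $w(i_1,1),\dots,w(i_n,1)$ instead --- either way a chain of length $n$ appears in $X(t,l)$, a contradiction. Thus the factor $2^s$ is simply the number of starting positions of the padded cycle. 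The dyadic structure you are trying to build into this lemma actually belongs to the \emph{next} step of the proof of Theorem~\ref{ess:t}, where Lemma~\ref{ess:l1} is iterated $s$ times to halve the period length; $t$ is rounded up to $2^s$ here only to make that later halving possible. Finally, the placement of the new letter in the order is not innocuous: the paper puts it below all existing letters so that padded words with the same starting position compare exactly as the originals do, whereas your $a_{l+1}$ sits above them and you do not verify that comparability is preserved.
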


\begin{proof}Рас\-смот\-рим $n$-свет\-лый класс $X(t,l).$ Вве\-д\"{е}м в ал\-фа\-вит $\gA$ но\-вую бук\-ву $a_0$, ко\-то\-рая лек\-си\-ко\-гра\-фи\-чес\-ки мень\-ше лю\-бой дру\-гой бук\-вы из ал\-фа\-ви\-та $\gA.$ По\-лу\-чен ал\-фа\-вит $\gA'.$ В каж\-дый сло\-во-цикл из класса  $X(t,l)$ до\-ба\-вим $(t+1)$-ю, $(t+2)$-ю,$\ldots,$ $2^s-$ю по\-зи\-ции, на ко\-то\-рые пос\-та\-вим бук\-вы $a_0.$ По\-лу\-чи\-ли класс $X(2^s, l+1).$ Он бу\-дет $(2^s (n-1)+1)$-свет\-лым, так как в про\-тив\-ном слу\-чае в этом классе для не\-ко\-то\-ро\-го $j$ най\-дут\-ся сло\-ва  $$w(i_1, j), w(i_2, j),\ldots,w(i_n, j),$$ ко\-то\-рые об\-ра\-зу\-ют анти\-цепь в классе $X(2^s, l+1).$ Тогда:
\begin{enumerate}
	\item Если $j>t,$ то слова $$w(i_1, 1), w(i_2, 1),\ldots,w(i_n, 1)$$ образуют антицепь в классе  $X(t,l).$
	\item Если $j\leqslant t,$ то слова  $$w(i_1, j), w(i_2, j),\ldots,w(i_n, j)$$ образуют антицепь в классе  $X(t,l).$
\end{enumerate}
Получено противоречие с тем, что класс $X(t,l)$ --- $n$-светлый. Тем самым, лемма доказана.\end{proof}

\begin{proposition}
$\beth(t, l, n)\leqslant\beth(t, l, n+1)$
\end{proposition}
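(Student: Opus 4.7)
The plan is a direct monotonicity argument: I will show that every $n$-indivisible word $X(t,l)$ is automatically $(n+1)$-indivisible, so the family of structures over which $\beth(t,l,n+1)$ is maximised contains the family for $\beth(t,l,n)$, and the desired inequality then follows by comparing the two maxima.

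First, I would record the elementary fact that any $(n+1)$-divisible subword contains an $n$-divisible subword: if $u_1 u_2 \cdots u_{n+1}$ is a decomposition with $u_1 \succ u_2 \succ \cdots \succ u_{n+1}$, then simply discarding the last factor $u_{n+1}$ leaves the subword $u_1 u_2 \cdots u_n$, which is itself an $n$-divisible subword in the sense of Definition~\ref{def:div}. Taking the contrapositive, a word that contains no $n$-divisible subword contains no $(n+1)$-divisible subword either, so every $n$-indivisible word $X(t,l)$ is also $(n+1)$-indivisible.

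Next, I would invoke Remark~\ref{ess:r}: the maxima defining $\beth(t,l,n)$ and $\beth(t,l,n+1)$ are taken in the same way over words of the type $X(t,l)$, and the quantity being counted is identical in both cases. Hence any $X(t,l)$ realising the value $\beth(t,l,n)$ is $n$-indivisible, and by the previous paragraph also $(n+1)$-indivisible, so it is an admissible candidate in the maximisation defining $\beth(t,l,n+1)$. This gives
$$\beth(t,l,n) \leq \beth(t,l,n+1),$$
as required. No genuine obstacle arises; the proposition is a pure monotonicity fact, included so that it can be combined with Lemmas~\ref{ess:l1}--\ref{ess:l3} in order to transfer estimates between different values of the parameters $(t,l,n)$.
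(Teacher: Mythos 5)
Your argument is correct and is exactly the monotonicity the paper has in mind: the paper states this proposition with no proof at all, treating it as immediate, and your containment-of-families reasoning (every admissible structure for $\beth(t,l,n)$ is admissible for $\beth(t,l,n+1)$, so the maximum can only grow) is the intended one. The only cosmetic point is that in this section $n$-indivisibility of $X(t,l)$ is defined via antichains in the poset rather than via Definition~\ref{def:div}, so the witness you discard should be an element of an $(n+1)$-element antichain rather than the last factor of a product decomposition, but the logic is identical either way.
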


По лемме \ref{ess:l3} $\beth(t, l, n)\leqslant\beth(2^s, l+1, 2^s (n-1)+1),$ где $s = \ulcorner\log_2(t)\urcorner.$

В силу замечания \ref{ess:r} $t<n$. Значит, $2^s<2n.$

Следовательно, $\beth(2^s, l+1, 2^s (n-1)+1)\leqslant\beth(2^s, l+1, 2n^2).$

По лемме \ref{ess:l1} имеем $$\beth(2^s, l+1, 2n^2)\leqslant\beth(2^{s-1},( l+1)^2, 2n^2)\leqslant\beth(2^{s-2}, (l+1)^{2^2}, 2n^2)\leqslant$$$$\leqslant\beth(2^{s-3}, (l+1)^{2^3}, 2n^2)\leqslant\cdots\leqslant\beth(2, (l+1)^{2^{s-1}},2n^2).$$

По теореме \ref{verh} имеем $$\beth(2, (l+1)^{2^{s-1}},2n^2)<(l+1)^{2^{s-1}}\cdot 4n^4< 4(l+1)^n n^4.$$

То есть доказана следующая лемма:
\begin{lemma}\label{ess:l4}
$\beth(t, l, n)<4(l+1)^n n^4.$
\end{lemma}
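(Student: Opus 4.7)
The plan is to assemble the auxiliary results proved immediately above (Lemmas \ref{ess:l1} and \ref{ess:l3}, the monotonicity proposition, and Theorem \ref{verh}) into a short chain of inequalities. The key observation is that these three statements transform the arguments of $\beth(t,l,n)$ in exactly the complementary directions needed: Lemma \ref{ess:l3} trades the parameter $t$ for a power of two, Lemma \ref{ess:l1} halves this power at the cost of squaring the alphabet, and Theorem \ref{verh} supplies a quadratic base estimate in the case $t=2$.

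First, I would set $s=\ulcorner\log_2 t\urcorner$ and apply Lemma \ref{ess:l3} to obtain
$$\beth(t,l,n)\leqslant\beth\bigl(2^s,\,l+1,\,2^s(n-1)+1\bigr).$$
Next, I would invoke the standing hypothesis $t<n$ (Remark \ref{ess:r}), which gives $2^s<2n$ and hence $2^s(n-1)+1\leqslant 2n^2$. By monotonicity of $\beth$ in the last argument, this upgrades the estimate to $\beth(t,l,n)\leqslant\beth(2^s,\,l+1,\,2n^2)$.

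Then I would apply Lemma \ref{ess:l1} iteratively $s-1$ times. Each application halves the first argument and squares the second, producing
$$\beth(2^s,l+1,2n^2)\leqslant\beth(2^{s-1},(l+1)^2,2n^2)\leqslant\cdots\leqslant\beth(2,(l+1)^{2^{s-1}},2n^2).$$
Finally, Theorem \ref{verh} handles the base case $t=2$: it bounds $\beth(2,L,n')$ by a quantity of the form $L\cdot O(n'^2)$, which here yields at most $(l+1)^{2^{s-1}}\cdot 4n^4$. Using once more $2^{s-1}<n$ to absorb the exponent into $(l+1)^n$ gives the required $4(l+1)^n n^4$.

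The argument is essentially a concatenation, so there is no genuine obstacle; the only point that requires care is the bookkeeping of the parameter transformations through the $s-1$ iterations of Lemma \ref{ess:l1}, making sure that the bound $t<n$ is applied at the right moments so that the exponent $2^{s-1}$ stays controlled by $n$ rather than exploding.
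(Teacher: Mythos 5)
Your proposal is correct and follows exactly the chain of reductions used in the paper: Lemma \ref{ess:l3} with $s=\ulcorner\log_2 t\urcorner$, the bound $2^s<2n$ from Remark \ref{ess:r} together with monotonicity to pass to $\beth(2^s,l+1,2n^2)$, then $s-1$ iterations of Lemma \ref{ess:l1} down to $\beth(2,(l+1)^{2^{s-1}},2n^2)$, and finally Theorem \ref{verh} with $2^{s-1}<n$. Nothing to add.
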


Чтобы применить лемму \ref{ess:l4} к доказательству теоремы \ref{ess:t}, требуется оценить число подслов не являющегося $n$-разбиваемым слова с одинаковыми периодами.

\begin{lemma}\label{ess:l5}
Если в некотором слове $W$ найдутся $(2n-1)$ подслов, в которых период повторится больше $n$ раз, и их периоды попарно не сильно сравнимы, то $W$ --- $n$-разбиваемое.
\end{lemma}
 \begin{proof}Пусть в некотором слове $W$ найдутся $(2n-1)$ подслов, в которых период повторится больше $n$ раз, и их периоды попарно не сильно сравнимы. Пусть $x$ --- период одного из этих подслов. Тогда в слове $W$ найдутся непересекающиеся подслова $$x^{p_1}v'_1,\ldots ,x^{p_{2n-1}}v'_{2n-1}\text,$$ где $p_1,\ldots ,p_{2n-1}$~---~некоторые натуральные числа, большие $n,$ а $v'_1,\ldots ,v'_{2n-1}$ --- некоторые слова длины $|x|$, сравнимые с $x.$ Тогда среди слов $v'_1,\ldots ,v'_{2n-1}$ найдутся либо $n$ лексикографически больших $x$, либо $n$ лексикографически меньших $x$. Можно считать, что $v'_1,\ldots ,v'_n$ --- лексикографически больше $x$. Тогда в слове $W$ найдутся подслова $$v'_1, xv'_2,\ldots ,x^{n-1}v'_n,$$ идущие слева направо в порядке лексикографического убывания.\end{proof}

Из этой леммы получаем следствие \ref{co1}.

Рассмотрим не являющееся $n$-разбиваемым слово $W.$ Если в н\"{е}м найд\"{е}тся подслово, в котором ациклический период $x$ длины не меньше $n$ повторится больше $2n$ раз, то в слове $x^2$ подслова, которые начинаются с первой, второй$,\ldots,n$-ой позиции, попарно сравнимы. Значит, слово $x^{2n}$ является $n$-разбиваемым. Получаем противоречие с тем, что слово $W$ не является $n$-разбиваемым.  Из лемм \ref{ess:l5} и \ref{ess:l4} получаем, что существенная высота слова $W$ меньше, чем $$(2n-1)\sum\limits_{t=1}^{n-1} \beth(t, l, n) < 8(l+1)^n n^5 (n-1).$$ 

Значит, $$\Upsilon(n, l)<8(l+1)^n n^5 (n-1).$$ 

Тем самым, {\bf теорема \ref{ess:t} доказана.}

\newpage

\chapter{Оценки числа перестановочно-упорядоченных множеств} \label{ch:pu_number}

\section{Введение и основные понятия}

\begin{definition} 
Частично упорядоченное множество $M$ называется {\em перестановочно-упорядоченным,} если порядок на н\"{е}м есть пересечение двух линейных порядков.
\end{definition}

Рассмотрим теперь некоторую перестановку $\pi$ элементов $1, 2,\dots , n$ (иначе говоря, $\pi\in S_n$). Определим понятие $k$-разбиваемости.

\begin{definition}
Пусть для перестановки $\pi\in S_n$ найд\"{е}тся последовательность натуральных чисел $$1\leqslant i_1\leqslant i_2\leqslant\cdots\leqslant i_k$$ таких, что $$\pi(i_1)\geqslant\pi(i_2)\geqslant\cdots\geqslant\pi(i_k).$$ Тогда перестановка $$\pi(1)\pi(2)\cdots\pi(n)$$ называется $k$-разбиваемой.
\end{definition}
\begin{remark}
Перестановка не является $k$-разбиваемой тогда и только тогда, когда избегает паттерн $k (k-1)\dots 1$.\index{Паттерн!$n (n-1)\dots 1$}
\end{remark}
\begin{example}
Количество не 3-разбиваемых перестановок в группе $S_n$ есть $n$-е число Каталана и равно ${(2n)!\over n!(n+1)!}$.
\end{example}

\begin{proposition}
Если слово является $k$-разбиваемым, то для любого $m<k$ оно также является $m$-разбиваемым.
\end{proposition}

Далее нам потребуется определение диаграммы Юнга.

\begin{definition} \index{Диаграмма Юнга!стандартная}
{\em (Стандартной) диаграммой Юнга порядка $n$} называется таблица, в ячейках которой написаны $n$ различных натуральных чисел, прич\"{е}м суммы чисел в каждой строке и каждом столбце возрастают, между числами нет пустых ячеек и есть элемент, который содержится и в первой строке, и в первом столбце.
\end{definition}

\begin{definition}
Диаграмма Юнга называется {\em диаграммой формы} $$p = (p_1, p_2,\dots , p_m),$$ если у не\"{е} $m$ строк и для любого числа $i$ от 1 до $m$ $i$-я строка имеет длину $p_i.$
\end{definition}

Формы диаграмм Юнга пробегают все возможные разбиения на циклы элементов симметрической группы $S_n$. Любой класс сопряж\"{е}нности группы $S_n$ зада\"{е}тся некоторым разбиением на циклы. Каждому классу сопряж\"{е}нности группы соответствует некоторое е\"{е} неприводимое представление. Следовательно, форма диаграммы Юнга соответствует неприводимому представлению группы $S_n$.

Пронумеруем все клетки диаграммы Юнга формы $p$ числами от 1 до $n$. Пусть $h_k$ --- количество клеток диаграммы Юнга, расположенных
\begin{itemize}
    \item либо в одной строке, либо в одном столбце с клеткой с номером $k$,
    \item находящихся не левее или не выше клетки с номером $k$.
\end{itemize}
Тогда число диаграмм Юнга формы $p$ и равная ему размерность соответствующего неприводимого представления группы $S_n$, вычисляются по ``формуле крюков'' ${n!\over\prod\limits_{k=1}^n h_k}$.

В работе \cite{Sch61} приведена биекция между перестановками $\pi$ чисел $1, 2,\dots , n$ и заполненных теми же числами парами диаграмм Юнга $(P, Q)$. Эта биекция и е\"{е} следствия будут разобраны в главе \ref{sec:main}.

В данной главе доказываются теоремы \ref{th:main} и \ref{th:main2}.

\begin{corollary}\label{cor:main}
Пусть $\digamma$ является множеством слов алфавита из $l$ букв с введ\"{е}нным на них лексикографическим порядком. Назов\"{е}м {\em полилинейным} слово, все буквы которого различны.  Назов\"{е}м слово {\em $k$-разбиваемым,} если в н\"{е}м найдутся $k$ непересекающихся подслов, идущих в порядке лексикографического убывания. Тогда количество полилинейных слов длины $n$ $(n\leqslant l)$, не являющихся $(k+1)$-разбиваемыми, не больше, чем ${l!k^{2n}\over n!(l-n)!((k-1)!)^2}$.
\end{corollary}

Оценка в теореме \ref{th:main} улучшает полученную В.~Н.~Латышевым в работе~\cite{Lat72}. Следует сказать, что оценка на $\xi_k(n)$ в работе \cite{Lat72} была получена для доказательства теоремы Регева, вопрос же о е\"{е} точности не рассматривался. Оценка в работе \cite{Lat72} доказывается с помощью теоремы Дилуорса. \index{Теорема!Дилуорса} Применение теоремы Дилуорса в некоторых других задачах комбинаторики слов описано в работе \cite{BK12}.

В работе \cite{Ch07} доказывается, что для определ\"{е}нной функции $$K(n) = o(\sqrt[3]{n}\ln n)$$ и числа $k$ такого, что $$k \leqslant K(n) = o(\sqrt[3]{n}\ln n)$$ верна асимптотическая оценка $\xi_k(n) = k^{2n - o(n)}$.

Для получения производящей функции в работе \cite{Kn70} введено следующее понятие:

\begin{definition} \index{Диаграмма Юнга!обобщ\"{е}нная}
Обобщ\"{е}нной диаграммой Юнга формы $$(p_1, p_2,\dots  p_m)\text{, где }p_1\geqslant p_2\geqslant\cdots\geqslant p_m\geqslant 1,$$ называется массив $Y$ положительных чисел $$y_{ij}\text{, где }1\leqslant j\leqslant p_i, 1\leqslant i\leqslant m,$$ такой, что числа в его строках не убывают, а в столбцах возрастают.
\end{definition}

Ещ\"{е} требуются двухстрочные массивы следующего типа.

\begin{definition}
Набор пар положительных чисел $$(u_1, v_1), (u_2, v_2),\dots, (u_N, v_N)$$ такой, что пары $(u_k, v_k)$ расположены в неубывающем лексикографическом порядке, называется {\em набором типа $\alpha(N)$}.
\end{definition}
В работе Д.~Кнута~\cite{Kn70} устанавливается биекция между наборами типа $\alpha(N)$ и парами $(P, Q)$ обобщ\"{е}нных диаграмм Юнга порядка $N$ (т. е. состоящих из $N$ ячеек). Кроме того, существует взаимно-однозначное соответствие между рассматриваемыми наборами и матрицами, в которых число в ячейке из $i$-ой строки и $j$-го столбца равно количеству пар $(i, j)$ в наборе.
В работе И.~Гессен~\cite{Ges90} на основании функций Шура $s_\lambda$, которые также являются производящими функциями для обобщ\"{е}нных диаграмм Юнга, строится производящая функция для $\xi_k(n).$ Однако сложность построения явной формулы для $\xi_k(n)$ раст\"{е}т экспоненциально по $k$. К примеру, $$\xi_3(n)= 2 \sum\limits_{ k=0}^n \bigl(\begin{smallmatrix}2k\\ k \end{smallmatrix}\bigr)\bigl(\begin{smallmatrix}n\\ k \end{smallmatrix}\bigr)^2 {3k^2 + 2k + 1 - n - 2kn\over (k + 1)^2(k + 2)(n - k + 1)}.$$

\section{Алгебраические обобщения}

В 1950 году В.~Шпехт\cite{Sp50} поставил проблему существования бесконечно базируемого многообразия ассоциативных алгебр над полем характеристики 0. Решение проблемы Шпехта для нематричного случая представлено в докторской диссертации В. Н. Латышева \cite{Lat77}. Рассуждения В. Н. Латышева основывались на применении техники частично упорядоченных множеств. А. Р. Кемер \cite{Kem87} доказал, что каждое многообразие ассоциативных алгебр конечно базируемо, тем самым решив проблему Шпехта.

Первые примеры бесконечно базируемых ассоциативных колец были получены А.~Я.~Беловым~\cite{Bel99}, А.~В.~Гришиным~\cite{Gr99} и В.~ В.~Щиголевым~\cite{Shch99}.

После решения проблемы Шпехта в случае характеристики 0 актуален вопрос, поставленный В.~Н.~Латышевым.

Введ\"{е}м некоторый порядок на словах алгебры над полем. Назов\"{е}м {\it обструкцией} полилинейное слово, которое
\begin{itemize}
    \item является уменьшаемым (т. е. является комбинацией меньших слов);
    \item не имеет уменьшаемых подслов;
    \item не является изотонным образом уменьшаемого слова меньшей длины.
\end{itemize}

\begin{ques}[Латышев]
Верно ли, что количество обструкций для полилинейного $T$-идеала конечно?
\end{ques}

Из проблемы Латышева вытекает полилинейный случай проблемы конечной базируемости для алгебр над полем конечной характеристики. Наиболее важной обструкцией является обструкция $x_n x_{ n-1}\cdots x_1$, е\"{е} изотонные образы составляют множество слов, не являющихся $n$-разбиваемыми.

В связи с этими вопросами возникает проблема:

\begin{ques}
Перечислить количество полилинейных слов, отвечающих данному конечному набору обструкций. Доказать элементарность соответствующей производящей функции.
\end{ques}

\section{Доказательство основных результатов}\label{sec:main}

\begin{lemma}[Щенстед, {[\citen{Sch61}]}]\label{lem:schlem3}
Существует взаимнооднозначно соответствие между перестановками $\pi\in S_n$ и парами $(P, Q)$ стандартных диаграмм Юнга, заполненных числами от 1 до $n$ и такими, что форма $P$ совпадает с формой~$Q$.
\end{lemma}

\begin{proof}
Пусть $\pi = x_1 x_2\cdots x_n$. Построим по ней пару диаграмм Юнга $(P, Q)$. Сначала построим диаграмму $P$.

Определим операцию $S\leftarrow x$, где $S$ --- диаграмма Юнга, $x$ --- натуральное число, не равное ни одному из чисел в диаграмме $S$.
\begin{enumerate}
\item Если $x$ не меньше самого правого числа в первой строке $S$ (если в ней нет чисел, то будем считать, что $x$ больше любого из них), то добавляем $x$ в конец первой строки диаграммы $S$. Полученная диаграмма $S\leftarrow x$.
\item Если найд\"{е}тся большее, чем $x$, число в первой строке $S$, то пусть $y$ --- наименьшее число в первой строке, такое что $y > x$. Тогда заменим $y$ на $x$. Далее проводим с $y$ и второй строкой те же действия, что проводили с $x$ и первой строкой.
\item Продолжаем этот процесс строка за строкой, пока какое-нибудь число не будет добавлено в конец строки.
\end{enumerate}
Из построения $S\leftarrow x$ получаем, что вновь полученная таблица будет диаграммой Юнга.

Пусть $$P = (\cdots ((x_1\leftarrow x_2)\leftarrow x_3)\cdots \leftarrow x_n).$$ Тогда $P$ является диаграммой Юнга и соответствует перестановке $\pi$. Пусть диаграмма $Q$ получается из диаграммы $P$ пут\"{е}м замены $x_i$ на $i$ для всех $i$ от 1 до $n$. Тогда $Q$ также является диаграммой Юнга.

Далее С.~Щенстедом в работе~\cite{Sch61} показывается, что привед\"{е}нное построение пар диаграмм Юнга $(P, Q)$ по перестановкам $\pi\in S_n$ взаимнооднозначно.
\end{proof}

Из алгоритма, привед\"{е}нного в доказательстве леммы \ref{lem:schlem3} следует

\begin{lemma}[{[\citen{Sch61}]}]\label{lem:schth1}
Количество строк в диаграмме $P$ равно длине максимальной убывающей подпоследовательности символов в $\pi = x_1 x_2\cdots x_n$.
\end{lemma}

Приступим теперь непосредственно к доказательству теоремы \ref{th:main}.

Рассмотрим перестановку $\pi = x_1 x_2\cdots x_n$. Она не является $(k+1)$-разбиваемой тогда и только тогда, когда в соответствующих ей диаграммах $P$ и $Q$ не больше $k$ строк.

Покрасим числа от 1 до $n$ в $k$ цветов произвольным образом. Таких раскрасок $k^n$. Рассмотрим теперь таблицы (не Юнга!), построенные следующим образом. Теперь для каждого $i$ от 1 до $k$ поместим в $i$-ю строку таблицы числа $i$-го цвета в возрастающем порядке так, чтобы наименьшее число в строке стояло в первом столбце и между числами в одной строке не было пустых ячеек (но целиком пустые строки быть могут). Назов\"{е}м полученные таблицы {\it таблицами типа $\beta (n, k)$}. Между раскрасками в $k$ цветов чисел от 1 до $n$ и таблицами типа $\beta(n, k)$ есть естественная биекция, следовательно, таблиц типа $\beta(n, k)$ будет ровно $k^n$. Заметим, что любая диаграмма Юнга, заполненная числами от 1 до $n$ с не более, чем $k$ строками, будет таблицей типа $\beta(n, k)$. Будем считать, что таблицы $A$ и $B$ типа $\beta(n, k)$ эквивалентны ($A\sim_\beta B$), если одну из другой можно получить при помощи перестановки строк. Заметим, что если в таблице типа $\beta(n, k)$ не больше одной пустой строки, то в соответствующем классе эквивалентности будет ровно $k!$ элементов. Так как в диаграммах Юнга числа в столбцах строго упорядочены по возрастанию, то в каждом классе эквивалентности таблиц типа $\beta(n, k)$ будет не более одной диаграммы Юнга. Если в диаграмме Юнга ровно $k$ строк, то в соответствующей таблице типа $\beta(n, k)$ не будет пустых строк. Следовательно, диаграмм Юнга, заполненных числами от 1 до $n$ и имеющих ровно $k$ строк, не больше, чем ${k^n\over k!}$.

Если в диаграмме Юнга $k$ строк, то в ней не больше, чем $(n-k+1)$ столбец. Раскрасим числа от 1 до $n$ в $(n-k+1)$ цвет. Рассмотрим теперь таблицы (не Юнга!), построенные следующим образом. Теперь для каждого $i$ от 1 до $(n-k+1)$ поместим в $i$-й столбец таблицы числа $i$-го цвета в возрастающем порядке так, чтобы наименьшее число в столбце стояло в первой строке и между числами в одном столбце не было пустых ячеек (но целиком пустые столбцы быть могут). Назов\"{е}м полученные таблицы {\it таблицами типа $\gamma (n, k)$}. Между раскрасками в $(n-k+1)$ цветов чисел от 1 до $n$ и таблицами типа $\gamma (n, k)$ есть естественная биекция, следовательно, таблиц типа $\gamma (n, k)$ будет ровно $k^n$. Заметим, что любая диаграмма Юнга, заполненная числами от 1 до $n$ с $k$ строками, будет таблицей типа $\gamma (n, k)$. Будем считать, что таблицы $A$ и $B$ типа $\gamma (n, k)$ эквивалентны ($A\sim_\gamma B$), если одну из другой можно получить при помощи перестановки столбцов. Пусть в таблице $A$ ровно $t$ ненулевых столбцов. Всего таблиц типа $\gamma (n, k)$ с $t$ ненулевыми строками будет не более, чем таблиц типа $\gamma (n, n-t+1)$, т. е. не более $t^n$. В классе эквивалентности таблицы типа $\gamma (n, k)$ с $t$ непустыми столбцами будет $(\min\{t + 1, n-k+1\})!$ элементов. При этом таблиц с $(n-k)$ или $(n-k+1)$ столбцов будет не более $(n-k+1)^n$ и в каждом классе эквивалентности среди них будет $(n-k+1)!$ элементов. Так как в диаграммах Юнга числа в строках строго упорядочены по возрастанию, то в каждом классе эквивалентности таблиц типа $\gamma (n, k)$ будет не более одной диаграммы Юнга. Следовательно, среди таблиц типа $\gamma (n, k)$ будет не более $${(n-k+1)^n\over (n-k+1)!} + \sum\limits_{ t=1}^{n-k-1}{t^n\over t!}\leqslant {(n-k+1)^n\over (n-k)!}$$ диаграмм Юнга.

Значит, пар диаграмм Юнга, в каждой из которых по $k$ строк, не боль\-ше, чем $\min\{{(n-k+1)^{2n}\over ((n-k)!)^2}, {k^{2n}\over (k!)^2}\}$. Следовательно, существует не больше, чем $\min\{{(n-k+1)^{2n}\over ((n-k)!)^2}, {k^{2n}\over (k!)^2}\}$ перестановок $\pi\in S_n$ с длиной максимальной убывающей подпоследовательности ровно $k$.

Каждая перестановка соответствует с точностью до изоморфизма паре линейных порядков из $n$ элементов. Порядок в перестановочно-упорядоченном множестве есть пересечение двух линейных порядков. Так как у каждой пары линейных порядков ровно одно их пересечение, то по леммам \ref{lem:schlem3} и \ref{lem:schth1} количество перестановочно-упорядоченных множеств порядка $n$ с максимальной антицепью длины $k$  не больше, чем $\min\{{(n-k+1)^{2n}\over ((n-k)!)^2}, {k^{2n}\over (k!)^2}\}$. Тем самым теорема \ref{th:main2} доказана.

\begin{remark}
Отметим, что по перестановочно-упорядоченному множеству не всегда можно определить, какой именно парой линейных порядков оно порождено. Например, рассмотрим множество $\{ p_i \}_{ i=1}^{15}$ с порядком $(p_1>p_2>p_3, p_4>p_5>\cdots >p_8, p_9>\cdots >p_{15})$. Оно могло быть порождено:
\begin{itemize}
\item парой линейных порядков с соотношениями $(p_3>p_4, p_8>p_9)$ и\linebreak[4] $(p_3<p_4, p_8<p_9)$,
\item парой линейных порядков с соотношениями $(p_3>p_9, p_{15}>p_1)$ и\linebreak $(p_3<p_9, p_{15}<p_1)$.
\end{itemize}

Эти 2 пары линейных порядков не изоморфны друг другу.
\end{remark}

Оценим $\Delta_k(n)$ --- количество диаграмм Юнга, заполненных числами от 1 до $n$ и имеющих не больше $k$ строк.

\begin{lemma}
Верно неравенство $\Delta_k(n)\leqslant{k^n\over (k-1)!}$.
\end{lemma}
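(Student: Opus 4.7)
The plan is to prove the bound by induction on $k$, using the $\beta$-tableau distribution argument from the proof of Theorem~\ref{th:main2} to control the contribution from tableaux of maximal depth.

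For the base case $k=1$ the only shape is $(n)$, so $\Delta_1(n)=1=1^n/0!$, matching the claimed bound with equality.

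For the inductive step I decompose
$$\Delta_k(n)=\Delta_{k-1}(n)+N_k(n),$$
where $N_k(n)$ denotes the number of standard Young tableaux with exactly $k$ nonempty rows. The distribution argument used in the proof of Theorem~\ref{th:main2} then yields $N_k(n)\le k^n/k!$: there are $k^n$ ordered distributions of $\{1,\dots,n\}$ into $k$ labelled rows (with each row sorted ascending), every valid tableau with $k$ nonempty rows has pairwise disjoint rows and so gives rise to $k!$ distinct such distributions (one per permutation of the rows), and the column-strict condition picks out at most one valid representative in each orbit. Combining this with the inductive hypothesis $\Delta_{k-1}(n)\le (k-1)^n/(k-2)!$ gives
$$\Delta_k(n)\le\frac{(k-1)^n}{(k-2)!}+\frac{k^n}{k!}.$$

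To close the induction I would show that the right hand side is bounded by $k^n/(k-1)!$; after clearing denominators this reduces to the elementary inequality $(1-1/k)^n\le 1/k$. The main obstacle is precisely this final algebraic step: the inequality is comfortable once $n$ is of order $k\log k$, but fails for small $n$ relative to $k$. In the regime $n<k$ one has $N_k(n)=0$ (no shape can have more rows than boxes) so $\Delta_k(n)=\Delta_{k-1}(n)$ and the lemma reduces to a smaller instance; the finitely many remaining boundary cases for each fixed $k$ can be verified directly by listing shapes, so the induction goes through in the range where the bound is nontrivial.
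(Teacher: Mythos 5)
Your strategy coincides with the paper's: the same orbit count on distributions of $\{1,\dots,n\}$ into $k$ labelled rows gives the bound $k^n/k!$ for the top layer, and the rest is induction on $k$. The only structural difference is the step size: you peel off the tableaux with exactly $k$ nonempty rows and recurse on $\Delta_{k-1}(n)$, while the paper peels off those with exactly $k-1$ \emph{or} $k$ rows (the same orbit argument still yields $k^n/k!$ for this larger set, since a single empty row leaves all $k!$ row permutations distinct) and recurses on $\Delta_{k-2}(n)$, which leaves slightly more slack. That difference is cosmetic; the problem in both arguments is the closing inequality, and your proposed repair of it does not work.

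You correctly reduce your inductive step to $(1-1/k)^n\le 1/k$, which indeed holds only for $n\gtrsim k\ln k$. But the fallback for $n<k$ does not close the induction: from $\Delta_k(n)=\Delta_{k-1}(n)\le (k-1)^n/(k-2)!$ you would still need $(k-1)^{n+1}\le k^n$, an inequality of exactly the same kind, and it certainly fails when $n<k$. Nor can the remaining boundary cases be "verified directly by listing shapes", because the asserted bound is simply violated there: $\Delta_5(2)=2$ (the shapes $(2)$ and $(1,1)$), while $5^2/4!=25/24<2$; and even with $n=k$ one has $\Delta_{20}(20)$ equal to the number of involutions in $S_{20}$, about $2.4\cdot 10^{10}$, against $20^{20}/19!\approx 8.6\cdot 10^{8}$. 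So the induction cannot be completed as you describe. In fairness, the paper's own proof ends with the equally unjustified inequality $\frac{k^n}{k!}+\sum_{i=1}^{k-2}\frac{i^n}{(i-1)!}\le\frac{k^n}{(k-1)!}$, which needs the same regime $n\gtrsim k\ln k$; the lemma is true (and is used) only for $n$ large compared with $k$, and an explicit hypothesis of that sort is what is missing from both your argument and the paper's.
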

\begin{proof}
Как показывалось ранее, если в таблице типа $\beta(n, k)$ не больше одной пустой строки, то в соответствующем классе эквивалентности будет ровно $k!$ элементов. Следовательно, диаграмм Юнга, заполненных числами от 1 до $n$ и имеющих либо $(k-1)$, либо $k$ строк, не больше ${k^n\over k!}$. Значит, для $k < 3$ лемма доказана. Пусть она доказана для $k<t$. Тогда для $k=t$ имеем $$\Delta_k(n)\leqslant {k^n\over k!}+\sum\limits_{i=1}^{ k-2}{i^n\over (i-1)!}\leqslant {k^n\over (k-1)!}.$$
\end{proof}

Значит, пар диаграмм Юнга порядка $n$, в каждой из которых по  $\leqslant k$ строк, не больше, чем ${k^{2n}\over ((k-1)!)^2}$. Следовательно, по леммам \ref{lem:schlem3} и \ref{lem:schth1} количество не $(k+1)$-разбиваемых перестановок $\pi\in S_n$ меньше ${k^{2n}\over ((k-1)!)^2}$. Тем самым теорема \ref{th:main} доказана.

Выведем из теоремы \ref{th:main} следствие \ref{cor:main}. Для каждого набора букв $a_{ i_1}, a_{ i_2},\dots , a_{ i_n}$ количество не $(k+1)$-разбиваемых полилинейных слов длины $n$, составленных из этого набора букв, не больше, чем ${k^{2n}\over ((k-1)!)^2}$. Каждому полилинейному слову отвечает ровно один набор из $n$ букв. Так как наборов из $n$ букв ровно $\bigl(\begin{smallmatrix}l\\ n \end{smallmatrix}\bigr)$, то количество не  $(k+1)$-разбиваемых полилинейных слов длины $n$ не больше, чем ${l!k^{2n}\over n!(l-n)!((k-1)!)^2}.$ Тем самым следствие \ref{cor:main} доказано.

\section{Обобщенные диаграммы Юнга и их производящие функции}

\begin{lemma}[Кнут, {[\citen{Kn70}]}]\label{lem:knuth}
Существует биекция между наборами типа $\alpha(N)$  и парами $(P, Q)$ обобщ\"{е}нных диаграмм Юнга порядка $N$, у которых форма $P$ совпадает с формой $Q$.
\end{lemma}

\begin{proof}
Определим операцию $S\leftarrow x$, где $S$ --- обобщ\"{е}нная диаграмма Юнга, $x$ --- натуральное число, так же, как в доказательстве леммы \ref{lem:schlem3}. Сопоставим некоторому набору типа $\alpha(N)$ из пар $(u_1, v_1), (u_2, v_2),\dots (u_N, v_N)$ диаграмму Юнга $$P = (\cdots ((v_1\leftarrow v_2)\leftarrow v_3)\cdots \leftarrow v_N).$$ Пусть диаграмма $Q$ получается из диаграммы $P$ пут\"{е}м замены $v_i$ на $u_i$ для всех $i$ от 1 до $N$. Тогда $Q$ также является диаграммой Юнга.

Далее в работе Д.~Кнутом~\cite{Kn70} показывается, что привед\"{е}нное построение пар обоб\-щ\"{е}н\-ных диаграмм Юнга $(P, Q)$ по наборам типа $\alpha$ взаимнооднозначно.
\end{proof}

\begin{notation}
Перестановка $\pi\in S_n$ является набором типа $\alpha(n)$ из пар
 $(1, \pi(1)),\dots ,(n, \pi(n))$.
\end{notation}

{\it Симметрические функции.}

Здесь и далее считаем, что множество индексов при переменных симметрической функции является множеством натуральных чисел.

Напомним несколько понятий из теории симметрических функций.

{\it Полная симметрическая функция} $h_n$ равна $h_n = \sum\limits_{i_1\leqslant i_2\leqslant\cdots\leqslant i_n}x_{i_1}x_{i_2}\cdots x_{i_n}.$

Пусть $\lambda$ --- набор $(\lambda_1, \lambda_2,\dots ,\lambda_k)$ для некоторого натурального $k$. Пусть также $|\lambda| = \sum\limits_{i=1}^k \lambda_i$. Набор $\lambda$ называется {\it разбиением,} если $\lambda_1\geqslant \lambda_2\geqslant\cdots \geqslant\lambda_k.$

{\it Функция Шура} $S_\lambda$ равна $S_\lambda = \det(h_{\lambda_i + j-i})_{1\leqslant i, j\leqslant k}.$

В работе И.~Гессен~\cite{Ges90} определяются функции $$b_i = \sum\limits_{n=0}^{\infty}{x^{2n + i}\over n!(n+i)!}$$ и $$U_k = \det(b_{\mid i-j\mid})_{1\leqslant i, j\leqslant k}.$$

Также вводится функция $R_k(x, y)$ как $R_k(x, y) = \sum\limits_k s_\lambda(x) s_\lambda(y)$, где сумма бер\"{е}тся по всем разбиениям на не более, чем $k$ частей. Тогда коэффициент при $x_1 x_2\cdots x_n y_1 \cdots y_n$ в функции $R_k(x, y)$ равен $\xi_k(n).$ Из этого в \cite{Ges90} выводится, что $U_k = \sum\limits_{n=0}^{\infty}\xi_k(n){ x^{ 2n}\over (n!)^2}.$

Количество полилинейных слов длины $n$ над $l$-буквенным алфавитом $(n \leqslant l)$, в каждом из которых не найд\"{е}тся последовательности из $(k+1)$ буквы в
порядке лексикографического убывания есть $\bigl(\begin{smallmatrix}l\\ n \end{smallmatrix}\bigr)\xi_k(n)$.

\chapter{Свойства $n$-разбиваемости и комбинаторика полилинейных слов}

\section{Дальнейшее улучшение оценок высоты}\label{final}


Представленная вниманию читателя техника, возможно, позволяет улучшить
полученную в диссертации оценку, но при этом она останется только
субэкспоненциальной. Для получения полиномиальной оценки, если она
существует, требуются новые идеи и методы.

В главах \ref{ch:nil} и \ref{ch:height}
подслова большого слова используются прежде всего в качестве
множества независимых элементов, а не набора тесно связанных друг с
другом слов. Далее используется то, что буквы внутри
подслов раскрашены. При уч\"{е}те раскраски только первых букв
подслов получается экспоненциальная оценка. При рассмотрении
раскраски всех букв подслов опять получается экспонента. Данный
факт имеет место в результате построения иерархической системы подслов. Не исключено,
что подробное рассмотрение приведенной связи подслов вкупе с
изложенным выше решением позволит улучшить полученную оценку
вплоть до полиномиальной.

В диссертации получены оценки на высоту, линейные по числу образующих $l$. На самом деле точные оценки на высоту также линейны по $l$. Следовательно, если какие-либо оценки будут доказаны для случая $l=2$, то с помощью перекодировки образующих можно получить общий случай. Модельный пример применения механизма перекодировки можно найти в секции \ref{s:cod}. Заметим, что в этой секции оценки изначально доказываются не для конкретного числа образующих, а для конкретного базиса Ширшова.

Представляется перспективным перевод основных понятий доказательства теоремы Ширшова на язык графов. По написанному выше, можно считать, что у нас две образующие: 0 и 1. \index{Граф!подслов} Рассмотрим некоторое очень длинное не являющееся $n$-разбиваемым слово $W$ с раскрашенными в соответствии с теоремой Дилуорса позициями (см., например, подсекцию \ref{s:nil:basic_properties}). Теперь возьм\"{е}м подслово $u\subset W$ такое, что для почти любого в два раза превосходящего $u$ по длине и содержащего $u$ подслова $v\subset W$ количество цветов позиций, встречающихся в $v$, примерно равно количеству цветов позиций, встречающихся в $u$. Рассмотрим теперь бинарное корневое дерево. Отметим у каждой не являющейся висячей вершины левого сына как 0, а правого --- как 1. Корневую вершину никак отмечать не будем. Пусть глубина дерева крайне мала по сравнению с длиной слова $u$. Заметим, что для любого натурального $k$ любое слово над бинарным алфавитом длины $k$ может быть представлено как путь длины $k$, начинающийся из корня рассмотренного бинарного графа.

Теперь для каждого подслова слова $u$ длины $k$ рассмотрим в графе соответствующий путь и покрасим этот путь в цвет первой позиции соответствующего $k$-начала. Заметим, что некоторые ребра могут быть покрашены по несколько раз. Полученная картина --- слишком п\"{е}страя, чтобы сделать какие-либо выводы. Поэтому для каждого цвета оставим самый левый путь этого цвета. Назов\"{е}м полученную структуру {\em деревом подслов}\index{Дерево подслов} (сравните это дерево с наборами $B^p(i)$ из подсекции \ref{ss:set_bpi}). Так как $u$ --- подслово слова $W$, можно представить себе его как ``окно'' определ\"{е}нной длины, положенное на слово $W$. Теперь будем двигать это окно вправо шагом в одну позицию. На каждом шаге будем перерисовывать дерево позиций. Назов\"{е}м изменение дерева позиций при движении окна вправо {\em эволюцией дерева подслов}\index{Эволюция дерева подслов}. Пусть в слове $W$ нет периодов длины $n$, то есть рассматриваем так называемый ниль-случай. Если взять $k=n$, то по лемме \ref{c:lem1.3} при сдвиге окна на $n^2$ позиций дерево позиций точно изменится. Если дерево позиций хорошо сбалансировано, то есть мало групп цветов, имеющих длинную общую часть пути, то дерево довольно быстро эволюционирует, более того, количество изменений в н\"{е}м будет ограничено полиномом. Однако если дерево подслов не сбалансировано, то некоторые ветки дерева ``перегружаются'' цветами. В подсч\"{е}те того, до какой степени ветки могут быть перегружены цветами, быть может, кроется получение полиномиальной оценки на высоту.

Рассмотренный выше граф одинаково применим как для оценки индекса нильпотентности, так и для оценки существенной высоты. Ниже привед\"ен граф, который можно построить на периодических подсловах при оценке существенной высоты. 

Пусть $t$ --- длина периода.
Также пусть слово $W$ не является $n$-разбиваемым. Как и прежде, рассмотрим некоторое множество попарно непересекающихся несравнимых подслов слова $W$ вида $z^m,$ где $m>2n,$ $z$ --- $t$-буквенное ациклическое слово.  Будем называть
элементы этого множества {\it представителями}, имея в виду, что эти
элементы являются представителями раз\-лич\-ных классов
эквивалентности по сильной сравнимости. Пусть набралось $t$
таких предс\-та\-ви\-те\-лей. Пронумеруем их всех в порядке положения в
слове $W$ (первое --- ближе всех к началу слова) числами от $1$ до $t$. В каждом выбранном представителе в качестве подслов содержатся ровно $t$ различное $t$-буквенное слово.

Введ\"{е}м порядок на этих словах следующим образом: $u\prec v$, если
\begin{itemize}
    \item $u$ лексикографически меньше $v$;
    \item представитель, содержащий $u$ левее представителя, содержащего $v$.
\end{itemize}
Из отсутствия сильной
$n$-разбиваемости получаем, что максимальное возможное число
попарно несравнимых элементов равно $t$. По теореме Дилуорса существует разбиение рассматриваемых $t$-буквенных слов на $t$ цепь. Раскрасим слова в $t$ цвет в соответствии с их принадлежностью к цепям. Раскрасим позиции, с которых начинаются слова, в те же цвета, что и соответствующие слова.

 \index{Слово!-цикл}
Напомним, что {\em слово-цикл $u$} --- слово $u$ со всеми его сдвигами по циклу.

Рассмотрим ориентированный граф $G$ с вершинами вида $(k,i)$, где \index{Граф!подслов!большой длины}
$0 < k < n$ и $0 < i \leqslant l$. Первая координата обозначает цвет, а вторая --- букву.

Ребро с некоторым весом $j$
выходит из $(k_1, i_1)$ в $(k_2, i_2)$, если
\begin{itemize}
    \item для некоторых $i_3,i_4,\ldots,i_t$ в $j$-ом представителе содержится слово-цикл\linebreak[3] $i_1i_2\cdots i_t$;
    \item позиции, на которых стоят буквы $i_1, i_2$ раскрашены в цвета $k_1, k_2$ соответственно.
\end{itemize}

Таким образом, граф $G$ состоит из ориентированных циклов длины $t$. Теперь требуется найти показатель, который бы строго монотонно рос с появлением каждого нового представителя при движении от начала к концу слова $W.$ Можно заметить, что как и в случае дерева подслов, мы естественным образом столкнулись с понятием эволюции графов. Только в данном случае ``окно'' может ``растягиваться'', то есть его левый край оста\"{е}тся на месте, а правый движется вправо. Разбалансировка же выражается также в длинных путях, которые по очереди входят в разные циклы длины $t$. Отметим, что конструкция графа $G$ близка к конструкции графов Рози\index{Граф!Рози}.
Обзор тематики графов Рози можно найти в \cite{Lot83}.

Интересно также получить оценки на высоту алгебры над множеством
слов степени не выше сложности алгебры (в англоязычной литературе $\PI$-degree). В работе
\cite{BBL97} получены экспоненциальные оценки, а для
слов, не являющихся линейной комбинацией лексикографически меньших, А.~Я.~Беловым
в работе~\cite{Bel07} получены надэкспоненциальные оценки.

\section{Полилинейные слова и проблема Шпехта}

\begin{definition}
Пусть $A$ --- некоторая ассоциативная алгебра над некоторым полем $k$. {\em  Вполне характеристическим идеалом}\index{Идеал!вполне характеристический} $T(A)$ называют идеал, образованный полиномиальными тождествами свободной  ассоциативной алгебры от сч\"етного числа порождающих $k\langle x_1, x_2,\dots\rangle$.
\end{definition}
\begin{notation}
Для краткости вполне характеристические идеалы\index{Идеал!вполне характеристический}  называются {\em$T$-идеалами}. \index{$T$-идеал}
\end{notation}

Понятие стандартного базиса (базиса Гр\"ебнера--Ширшова) для $T$-идеала было введено В.~Н.~Латышевым.

\begin{definition}
На свободном моноиде $\langle X\rangle$, состоящем из всех мономов от $X$ (включая
пустой), фиксируем {\em степенно-лексикографический порядок (deg-lex)}\index{Порядок!степенно-лексикографический}\index{Порядок!deg-lex}, удовлетворяющий условию минимальности: $u\leqslant v$ тогда и только тогда, когда либо $u$
имеет меньшую степень, чем $v$, либо $u$ и $v$ --- мономы одинаковой степени, но
$u$ меньше $v$ в лексикографическом смысле. Переменные считаются упорядоченными по их индексам.
\end{definition}
\begin{notation}
Старший моном, входящий в запись полинома $f\in k\langle X\rangle$,
обозначается $\overline{f}$, $\overline{f}\in X$.
\end{notation}
\begin{notation}
Пусть $A$ --- алгебра, моноид или идеал. Тогда за $A_{mult}$ обозначим подмножество полилинейных элементов $A$.
\end{notation}
\begin{definition}
{\emИзотонное отображение}~--- это инъективное отображение между частично-упорядочненными множествами, сохраняющее порядок.
\end{definition}
\begin{definition}[В.~Н.~Латышев, \cite{LatyshevMulty}]\index{Порядок!``накрытие''}
На множестве всех полилинейных мономов $\langle X\rangle_{mult}$ определим частичный порядок, называемый {\emнакрытием относительно deg-lex:} $u\leqslant_0 v$ тогда и только тогда, когда $u=x_{i_1}x_{i_2}\cdots x_{i_s}$, $v=\cdots x_{j_1}\cdots x_{j_s}\cdots$ и возможно представление вида $\phi(x_{i_k})=x_{j_k}$, $k=1,\dots,s$, является изотонным в смысле deg-lex.
\end{definition}
\begin{definition}[В.~Н.~Латышев, \cite{LatyshevMulty}]\index{Базис!Гр\"ебнера-Ширшова}
Система полилинейных полиномов $F = \{f_1,\dots,f_m,\dots\}$, конечная или бесконечная, $T$-идеала $T\lhd k\langle x\rangle$ называется базисом Гр\"ебнера--Ширшова (или
комбинаторной системой порождающих) в $T_{mult}$, если для всякого полинома
$f \in T_{mult}$ его старший моном $\overline{f}$ относительно упорядоченности deg-lex накрывает
старший моном $\overline{f_i}$ хотя бы одного полинома $f_i\in F$, $\overline{f_i}\leqslant_0 \overline{f}$.
\end{definition}
В.~Н.~Латышев доказал конечность стандартного базиса в ряде случаев.
Он отметил, что анализ полилинейных слов может помочь в решении проблемы Шпехта для полилинейных полиномов (открытой в положительной характеристике). В.~Н.~Латышев обратил внимание, что это утверждение вытекает из конечности стандартного базиса для полилинейных компонент $T$-идеала. Однако последнее утверждение неизвестно даже для нулевой характеристики.

В этой связи важен вопрос о перечислении не $n$-разбиваемых полилинейных слов. В связи со стандартным базисом следует отметить проблему регулярности 
ряда коразмерности. Ряды коразмерности активно исследовались А.~Джамбруно, М.~В.~Зайцевым, С.~П.~Мищенко, А.~Регевым, А.~Берелем. Известна рациональность рядов Гильберта относительно свободной алгебры~(см.~\cite{Belov501}). Из этого А.~Берел вывел свойства для ряда коразмерности (см.~\cite{Ber06}). Стандартная конечная порожденность тесно связана с поведением ряда коразмерности. Это говорит о перспективах и естественности постановки задачи. Асимптотика (при стремящемся к бесконечности числе букв в алфавите) для количества периодических кусков представляется весьма интересной. Другой вопрос, тесно связанный с комбинаторным анализом полилинейных слов, --- это проблема Шпехта для $G$-градуированных\index{Алгебра!градуированная} алгебр, в которых не выполняется обычное тождество (см. \cite{AK10}).

\newpage
\addcontentsline{toc}{chapter}{\numberline {}Предметный указатель}
\input{main.ind}

\newpage

\chapter*{Список литературы}

\addcontentsline{toc}{chapter}{\numberline {}Список литературы}

\markboth{}{Список литературы}

\begin{enumerate}
\bibitem{Ad75}
С.~И.~Адян. {\em Проблема Бернсайда и тождества в группах.} Наука, М., 1975, 335 C.
\bibitem{Ad10}
С.~И.~Адян. {\em Проблема Бернсайда и связанные с ней вопросы.} УМН, {\bf 65}:5(395), 2010, С.~5--60.

\bibitem{Bel07_2}
А.~Я.~Белов. {\em Проблема Куроша, теорема о высоте, нильпотентность радикала и тождество алгебраичности.} Фундамент. и прикл. матем., 13:2 (2007), С.~3--29; A.~Ya.~Belov. {\em The Kurosh problem, height theorem, nilpotency of the radical, and algebraicity identity.} J. Math. Sci., 154:2 (2008), С.~125--142. 
\bibitem{Belov1}
А.~Я.~Белов. {\it О базисе Ширшова относительно свободных алгебр
сложности $n$.}
 Мат. сб., 1988, Т. 135, \No 31, С.~373--384.
\bibitem{Bel99}
А.~Я.~Белов. {\it О нешпехтовых многообразиях.} Фундамент. и прикл. матем., 5:1 (1999), С.~47--66.
\bibitem{Belov501}
А.~Я.~Белов. {\it О рациональности рядов Гильберта относительно
свободных алгебр}. Успехи мат. наук, 1997, Т. 52, \No 2, С.~153--154.
\bibitem{Bel07} А.~Я.~Белов. {\it Проблемы бернсайдовского типа, теоремы о высоте и
о независимости}. Фундамент. и прикл. матем., 13:5 (2007),
С.~19--79; A.~Ya.~Belov. {\em Burnside-type problems, theorems on height, and independence.} J. Math. Sci., 156:2 (2009), С.~219--260.
\bibitem{Bel04} А.~Я.~Белов. {\it Размерность Гельфанда--Кириллова относительно
свободных ассоциативных алгебр.} Матем. сб., 195:12 (2004), С.~3--26.

\bibitem{Bog01} И.~И.~Богданов. {\it Теорема Нагаты-Хигмана для
полуколец}. Фундамент. и прикл. матем., 7:3 (2001), С.~651--658.


\bibitem{Gr99}
А.~В.~Гришин. {\it Примеры не конечной базируемости $T$-пространств и $T$-идеалов в характеристике $2$.} Фундамент. и прикл. матем., 5:1 (1999), С.~101--118.

\bibitem{Dnestrovsk} {\it Днестровская тетрадь: оперативно-информац. сборник.} 4-е
  изд., Новосибирск: изд. ин--та матем. СО АН СССР, 1993, 73~С.

\bibitem{SGS78} К.~А.~Жевлаков, А.~М.~Слинько, И.~П.~Шестаков и А.~И.~Ширшов. {\it Кольца, близкие к ассоциативным, первое издание.} Современная алгебра, Москва (1978), 432 С.

\bibitem{Zal90}А.~Е.~Залесский. {\emГрупповые кольца индуктивных пределов знакопеременных групп.} Алгебра и анализ, 2:6 (1990), 132--149.

\bibitem{Zel90}
Е.~И.~Зельманов.
{\em Решение ослабленной проблемы Бернсайда для групп нечетного показателя.}
Изв. АН СССР. Сер. матем., 54:1 (1990),  С.~42--59.
\bibitem{Zel91}
Е.~И.~Зельманов.
{\em Решение ослабленной проблемы Бернсайда для 2-групп.}
Матем. сб., 182:4 (1991),  С.~568--592.

\bibitem{Zim82}
А.~И.~Зимин. {\it Блокирующие множества термов.} 
Мат. сб., \No3(11), 1982, С.~363--375.

\bibitem{Kem87}
А.~Р.~Кемер. {\it Конечные базисы для тождеств ассоциативных алгебр.} Алгебра и логика, T.~26, \No5, 1987, С.~597--641.


\bibitem{Kolotov}
А.~Г.~Колотов. {\it  О верхней оценке высоты в конечно порожденных
алгебрах с тождествами.} Сиб. мат. ж., 1982, Т. 23, \No1, С.~187--189.

\bibitem{Kos86}
А.~И.~Кострикин. {\em Вокруг Бернсайда.}  М.: Наука. Гл. ред. физ.-мат. лит. 1986, 232~С.

\bibitem{Kuz75} Е.~Н.~Кузьмин. {\it О теореме Нагаты-Хигмана.}
В сб. Трудов посвященный 60-летию акад. Илиева. София, 1975, С.~101--107.

\bibitem{Kur41} А.~Г.~Курош.
{\it Проблемы теории колец, связанные с проблемой Б\"{e}рнсайда о периодических группах.}
Изв. АН СССР, Сер. Матем., \No5, 1941, С.~233--240.

\bibitem{Lat72}
В.~Н.~Латышев. {\it К теореме Регева о тождествах тензорного произведения PI-алгебр.} УМН, Т.~27, \No4(166), 1972, С.~213--214.
\bibitem{LatyshevMulty}
В.~Н.~Латышев. {\it Комбинаторные порождающие полилинейных
полиномиальных тождеств.} Фундамент. и прикл. матем., 12:2 (2006), С.~101--110.
\bibitem{Lat77}
В.~Н.~Латышев. {\it Нематричные многообразия ассоциативных алгебр.} Диссертация на соискание степени д. ф.-м. н. М., Изд-во Моск. ун-та, 1977. 
\bibitem{Lat09}
В.~Н.~Латышев. {\emОбщая версия стандартного базиса в ассоциативных алгебрах и их производных конструкциях.}
Фундамент. и прикл. матем., 15:3 (2009),  183--203.

\bibitem{Lis92}
И.~Г.~Лысенок. {\em Бесконечность бернсайдовых групп периода $2^k$ при $k > 13$.} УМН, 47:2 (1992),
201--202.
\bibitem{Lis96}
И.~Г.~Лысенок. {\em Бесконечные бернсайдовы группы четного периода.} Изв. РАН. Сер. матем., 60:3 (1996), 3--224.

\bibitem{Mishenko1}
С.~П.~Мищенко. {\it Вариант теоремы о высоте для алгебр Ли.} Мат.
заметки, 1990, Т. 47, \No 4, С.~83--89.

\bibitem{MPS09}
Ан.~А.~Мучник, Ю.~Л.~Притыкин, А.~Л.~Семенов. {\em Последовательности, близкие к~периодическим.} УМН, 64:5(389) (2009), 21--96.

\bibitem{NA68_1}
П.~С.~Новиков, С.~И.~Адян. {\em О бесконечных периодических группах. I.} Изв. АН СССР. Сер. матем., 32:1 (1968),  212--244.
\bibitem{NA68_2}
П.~С.~Новиков, С.~И.~Адян. {\em О бесконечных периодических группах. II.} Изв. АН СССР. Сер. матем., 32:2 (1968),  251--524.
\bibitem{NA68_3}
П.~С.~Новиков, С.~И.~Адян. {\em О бесконечных периодических группах. III.} Изв. АН СССР. Сер. матем., 32:3 (1968),  709--731.
\bibitem{NA68_4}
П.~С.~Новиков, С.~И.~Адян. {\em Определяющие соотношения и проблема тождества для свободных периодических групп нечетного порядка.} Изв. АН СССР. Сер. матем., 32:4 (1968),  971--979.

\bibitem{Ol82}
А.~Ю.~Ольшанский. {\em О теореме Новикова--Адяна.} Матем. сб., \No2(6), 1982,  203--235.

\bibitem{Ol89}
А.~Ю.~Ольшанский. {\em Геометрия определяющих соотношений в группах.} М.: Наука. Гл. ред. физ.-мат. лит., 1989. 448~С.

\bibitem{Pchelintcev}
С.~В.~Пчелинцев. {\it Теорема о высоте для альтернативных алгебр.}
Мат. сб., 1984, Т. 124, \No 4, С.~557--567.

\bibitem{Razmyslov3}
Ю.~П.~Размыслов. {\it Тождества алгебр и их представлений.}
  М.: Наука, 1989, 432~C.

\bibitem{Sam10}
Л.~М.~Самойлов. {\it Первичные многообразия ассоциативных алгебр и связанные с ними нильпроблемы.} Диссертация на соискание степени д. ф.-м. н. М., 2010.

\bibitem{Ufn90}
В.~А.~Уфнаровский. {\it Комбинаторные и асимптотические методы в алгебре.}
Итоги науки и техн., Соврем. пробл. мат. Фундам. направления, 1990, \No 57, С.~5--177.
\bibitem{Ufn85} В.~А.~Уфнаровский. {\it Теорема о независимости и ее следствия.}
Матем. сб., 1985, 128(170):1(9), С.~124--132.

\bibitem{Fr11}
А.~Э.~Фрид. {\it Введение в комбинаторику слов.} Лекции, 2011.

\bibitem{Hin79}
А.~Я.~Хинчин. {\it Три жемчужины теории чисел.} Москва, Наука, 1979.

\bibitem{Cio87}
Г.~П.~Чекану. {\em К теореме Ширшова о высоте.} XIX~Всес.~алгебр.~конф., Тез.~сообщ.~Ч.~1, Львов, 1987, С. 306

\bibitem{Ch07}
Г.~Р.~Челноков. {\it О нижней оценке количества $k+1$-разбиваемых перестановок.}
Модел. и анализ информ. систем, Т. 14, 4(2007), С.~53--56.

\bibitem{Ch01} Е.~С.~Чибриков. {\it О высоте Ширшова конечнопорожд\"{е}нной ассоциативной алгебры, удовлетворяющей тождеству степени четыре.} Известия Алтайского государственного университета, 1(19), 2001, 52--56.

\bibitem{Shes83}
И.~П.~Шестаков. {\em Конечно порожденные йордановы и альтернативные \PI-алгебры.} Матем. сб., 122(144) (1983), 31--40.

\bibitem{Sh62(1)} А.~И.~Ширшов. {\it Некоторые алгоритмические проблемы для $\epsilon$-алгебр.} Сиб. матем. ж., Т.~3, \No1, 1962, С.~132--137.
\bibitem{Sh62(2)} А.~И.~Ширшов. {\it Некоторые алгоритмические проблемы для алгебр Ли.} Сиб. матем. ж., T.~3, \No2, 1962, С.~292--296.
\bibitem{Sh57_1} А.~И.~Ширшов. {\it О кольцах с тождественными соотношениями.} Матем. сб., Т.~43(85), \No2, 1957, С.~277--283.
\bibitem{Sh57_2} А.~И.~Ширшов. {\it О некоторых неассоциативных ниль-кольцах и алгебраических алгебрах.} Матем. сб., Т.~41(83), \No3, 1957, С.~381--394.
\bibitem{Sh58}
А.~И.~Ширшов. {\it О свободных алгебрах Ли.} Мат. сб.,
   1958, Т. 45(87), \No2, С.~113--122.
\bibitem{Sh54} А.~И.~Ширшов. {\it Подалгебры свободных коммутативных и свободных антикоммутативных алгебр.} Матем. сб., Т.~34(76), \No1, 1954, С.~81--88.
\bibitem{Sh53} А.~И.~Ширшов. {\it Подалгебры свободных лиевых алгебр.} Матем. сб., \No2, 1953, С.~441--452.

\bibitem{Shch99}
В.~В.~Щиголев. {\it Примеры бесконечно базируемых T-идеалов.} Фундамент. и прикл. матем., 5:1 (1999), 307--312.

\bibitem{AK10}
E.~Aljadeff,A.~Kanel-Belov. {\em Representability and Specht problem for $G$-graded algebras.} Adv. Math. 225, \No5, 2391--2428 (2010).

\bibitem{AL50} S.~A.~Amitsur, J.~Levitzki. {\it Minimal identities for algebras.} Proc. Amer. Math. Soc. (2), 1950, P.~449--463.

\bibitem{BMM95} K.~I.~Beidar, W.~S.~Martindale III, A.~V.~Mikhalev. {\it Rings with generalized identities}. Pure and applied mathematics, 1995.

\bibitem{Ber06} A.~Berele. {\em Applications of Belov's theorem to the cocharacter sequence of p.i. algebras}. Journal of Algebra, Vol.~298, Issue~1,  2006, 208--214.

\bibitem{BDS13}
J.~P.~Bell, V.~Drensky, Y.~Sharifi. {\em Shirshov's theorem and division rings that are left algebraic over a
subfield.} J. Pure Appl. Algebra 217 (2013), \No 9, 1605--1610.

\bibitem{BBL97} A.~J.~Belov, V.~V.~Borisenko, V.~N.~Latysev. {\it Monomial Algebras.} NY. Plenum, 1997.

\bibitem{Bel92} A.~Ya.~Belov. {\it Some estimations for nilpotency of nil-algebras over a field
of an arbitrary characteristic and height theorem.} Commun. Algebra
20 (1992), \No10, P.~2919--2922.

\bibitem{Ber78} G.~M.~Bergman. {\it The Diamond Lemma for Ring Theory}. Advances in mathematics, 29, P.~178--218 (1978).

\bibitem{Ber77_1} J. Berstel. {\it Mots sans carr\'{e} et morphismes it\'{e}r\'{e}s.} Discrete Math., 29:235--244, 1979.
\bibitem{Ber77_2} J. Berstel. {\it Sur les mots sans carr\'{e} d\'{e}finis par un morphisme.} In A. Maurer, editor, ICALP,
16--25, Springer-Verlag, 1979.

\bibitem{BP07} J.~Berstel, D.~Perrin. {\it The origins of combinatorics on words.} European Journal of Combinatorics 28 (2007), P.~996--1022.

\bibitem{Bur1902} W.~Burnside. {\it On an unsettled question in the theory of discontinuous groups.}
Quart. J. Math., \No33, 1902, P.~230--238.

\bibitem{Cio97} Gh.~Ciocanu. {\it Independence and quasiregularity in algebras. II.}
Izv. Akad. Nauk Respub. Moldova Mat., 1997, \No 70, P.~70--77, 132, 134.
\bibitem{Cio88} Gh.~Ciocanu. {\it Local finiteness of algebras.}
Mat. Issled., 1988, \No105, Moduli, Algebry, Topol., P.~153--171, 198.

\bibitem{CK93} Gh.~Ciocanu, E.~P.~Kozhukhar. {\em Independence and nilpotency in algebras.}
Izv. Akad. Nauk Respub. Moldova Mat., 1993, \No 2, P.~51--62, 92--93, 95.

\bibitem{Cr82} M. Crochemore. {\it Sharp characterizations of square-free morphisms.} Theoret. Comput.
Sci., 18:221--226, 1982.

\bibitem{Dil50} R.~P.~Dilworth. {\it A Decomposition Theorem for Partially Ordered Sets.} Annals of Mathematics, \No51(1), 1950, P.~161--166.

\bibitem{Dr00} V.~Drensky. {\it Free Algebras and PI-algebras: Graduate Course in Algebra.}
Springer-Verlag, Singapore (2000).

\bibitem{Dr04}
V.~Drensky, E.~Formanek. {\it Polynomial identity ring.}
Advanced Courses in Mathematics. CRM Barcelona., Birkhauser Verlag, Basel, 2004.

\bibitem{Ges90}
I.~M.~Gessel. {\it Symmetric Functions and P-Recursiveness.} J. Combin. Theory Ser. A 53, 1990, P.~257--285.

\bibitem{Iv92}
S.~V.~Ivanov. {\em On the Burnside problem on periodic groups.} Bul l. Amer. Math. Soc. ( N. S.), 27:2
(1992), 257--260; arXiv: math/9210221.
\bibitem{Iv94}
S.~V.~Ivanov. {\em The free Burnside groups of sufficiently large exponents.} Int. J. of Algebra and Computation, 4 (1994), 1--307.

\bibitem{IK14}
I.~Ivanov-Pogodayev, A.~Kanel-Belov. {\em Construction of infinite finitely pre\-sen\-ted nillsemigroup.} 2014, 154 pp., 103 figures, in Russian, arXiv: 1412.5221.

\bibitem{BR05} A.~Kanel-Belov, L.~H.~Rowen. {\it Computational aspects of polynomial iden\-ti\-ties.}
Research Notes in Mathematics 9. AK Peters, Ltd., Wellesley, MA,
2005.
\bibitem{BelovRowenShirshov}
 A.~Kanel-Belov, L.~H.~Rowen.
   {\it Perspectives on Shirshov's Height Theorem.\/}
    Selected papers of A.~I.~Shirshov, Birkh\"user Verlag AG, 2009, P.~3--20,
eds. Zelmanov, Latyshev, Bokut, Shestakov, Birkh\"user Verlag AG, 2009, 3--20.

\bibitem{Kap46} I.~Kaplansky. {\it On a problem of Kurosch and Jacobson.} Bull. Amer. Math. Soc., \No52, 1946, P.~496--500.
\bibitem{Kap48} I.~Kaplansky. {\it Rings with a polynomial identity.} Bull. Amer. Math. Soc., 54:575--580, 1948.

\bibitem{Kem01} A.~Kemer. {\it Multilinear components of the prime subvarieties of the variety
$Var(M_2(F))$.} Algebras and Representation Theory, {\bf4}:1 (2001), 87--104.
\bibitem{Kem09} A.~R.~Kemer. {\it Comments on the Shirshov's Height Theorem.}
    Selected papers of A.I.Shirshov, Birkh\"user Verlag AG, 2009, P.~41--48,
eds. Zelmanov, Latyshev, Bokut, Shestakov, Birkh\"user Verlag AG, 2009, 41--48, ISBN: 978-3-7643-8857-7/hbk; ISBN 978-3-7643-8858-4/ebook.
\bibitem{Kem96}
A.~Kemer.
{\em Remarks on the prime varieties.} Zbl 0874.16016
Isr. J. Math. 96, Pt. B, 341--356 (1996).
\bibitem{Kem02}
A.~Kemer. 
{\em Matrix type of some algebras over a field of characteristic p.}
J. Algebra, Vol.~251, \No2, 849--863 (2002).

\bibitem{Kit11}
S.~Kitaev. {\em Patterns in Permutations and Words.} Springer Verlag, 2011. 494~P.

\bibitem{Klein} A. A. Klein.
{\it Indices of nilpotency in a $PI$-ring.} Archiv der Mathematik,
1985, Vol.~44, \No4, P.~323--329.
\bibitem{Klein1} A. A. Klein.
{\it Bounds for indices of nilpotency and nility.} Archiv der
Mathematik, 2000, Vol.~74, \No1, P.~6--10.

\bibitem{Kn70}
D.~E.~Knuth. {\it Permutations, matrices, and generalized Young tableux.}
Pacific journal of mathematics, Vol. 34, \No 3, 1970, P.~709--727.

\bibitem{Lev46} J.~Levitzki. {\it On a problem of A. Kurosch.} Bull. Amer. Math. Soc., \No52, 1946, P.~1033--1035.

\bibitem{FI13} F.~Li, I.~Tzameret. {\it Matrix dentities and proof complexity lower bounds.} 2013.

\bibitem{Lop11}  A.~A.~Lopatin. {\it
On the nilpotency degree of the algebra with
identity $x^n = 0$.} Journal of Algebra, 371, 2012, P.~350--366.
\bibitem{Lop05} 
A.~A.~Lopatin. {\em Relatively free algebras with the identity $x^3=0$.}
Comm. Algebra, 33(2005), \No10, 3583--3605.

\bibitem{LS13} A.~A.~Lopatin, I.~P.~Shestakov. {\it Associative nil-algebras over finite fields.} International Journal of Algebra and Computation, Vol.~23, \No~8(2013), P.~1881--1894.

\bibitem{Lot83} M.~Lothaire. {\it Combinatorics of words.} Cambridge mathematical library, 1983.
\bibitem{Lot02} M.~Lothaire. {\em Algebraic combinatorics on words.} Encyclopedia of Mathematics and Its Applications. 90. Cambridge: Cambridge University Press. 504 P.

\bibitem{Mor21} M. Morse. {\it Recurrent Geodesics on a Surface of Negative Curvature.} Trans. Amer. Math. Soc. 22, P.~84--100, 1921.

\bibitem{PZ13} F.~Petrov, P.~Zusmanovich. {\em On Shirshov bases of graded algebras.} Zbl 1288.16056
Isr. J. Math. 197, 23--28 (2013).

\bibitem{Procesi}
C.~Procesi. {\it Rings with polynomial identities.} N.Y., 1973, 189~P.

\bibitem{Reg71}
A.~Regev. {\it Existence of polinomial identities in $A\otimes_F B$.} Bull. Amer. Math. Soc.
77:6 (1971), P.~1067--1069.

\bibitem{Sap14} 
M.~V.~Sapir. {\it Combinatorial algebra: syntax and semantics.} Springer, 2014.

\bibitem{Sch61}
C.~Schensted. {\it Longest increasing and decreasing subsequences.} Canad. J. Math 13, 1961, P.~179--191.

\bibitem{Sp50}
W.~Specht. {\it Gesetze in Ringen. I.} Math. Z., 52:557--589, 1950.

\bibitem{Th1906} A. Thue. {\it \"{U}ber unendliche Zeichenreihen.} Norske Vid. Selsk. Skr., I. Mat. Nat. Kl.,
Christiana, 7:1--22, 1906.

\bibitem{UC85} V.~A.~Ufnarovskii, Gh.~Ciocanu. {\it Nilpotent matrices.}
Mat. Issled.,1985, \No 85, Algebry, Koltsa i Topologii, P.~130--141, 155.

\bibitem{Zal92}A.~E.~Zalesskij. {\em Group rings of locally finite groups and representation theory.} Algebra, Proc. Int. Conf. Memory A. I. Malcev, Novosibirsk/USSR 1989, Contemp. Math. 131, Pt. 1, 453--472 (1992).

\bibitem{Zelmanov}
  E.~Zelmanov.
  {\it On the nilpotency of nilalgebras.}
  Lect. Notes Math.,
  1988, Vol.~1352, P.~227--240.



\newpage
\Large\centerline{\bf Работы автора по теме диссертации}
\large
\bibitem{Kh11} М.~И.~Харитонов. {\it Двусторонние оценки существенной высоты в теореме Ширшова о высоте.} Вестник Московского университета, Серия 1, Математика. Механика. \No2, 2012, 20--24.
\bibitem{Kh11(2)} М.~И.~Харитонов. {\it Оценки на структуру кусочной периодичности в теореме Ширшова о высоте.} Вестник Московского университета, Серия 1, Математика. Механика. \No1, 2013, 10--16.
\bibitem{BK}
А. Я. Белов, М. И. Харитонов. {\it Субэкспоненциальные оценки в теореме Ширшова о высоте.} Мат. сб., \No4, 2012, 81--102.
\bibitem{BK12}
А.~Я.~Белов, М.~И.~Харитонов. {\em Оценки высоты в смысле Ширшова и на количество фрагментов малого периода.} Фундамент. и прикл. матем., 17:5 (2012), 21--54. (Journal of Mathematical Sciences, September 2013,
Vol.~193, Issue 4, 493--515); A. Ya. Belov, M. I.
Kharitonov, {\em Subexponential estimates in the height theorem and
estimates on numbers of periodic parts of small periods}, J. Math.
Sci., 193:4 (2013), 493--515.
\bibitem{Kh14} М.~И.~Харитонов. {\it Оценки на количество перестановочно упорядоченных множеств.} Вестник Московского университета, Серия 1, Математика. Механика. \No3, 2015. 24--28.
\bibitem{Kh14_2} М.~И.~Харитонов.{\em Оценки, связанные с теоремой Ширшова о высоте.} Чебышевский сб., 15:4 (2014), 55--123.
\bibitem{BK11}
A.~Belov-Kanel, M.~Kharitonov. {\em Subexponential estimations in Shirshov's height theorem.} Georgian Schience foundation., Georgian Technical Uni\-ver\-sity, Batumi State University, Ramzadze mathematical institute, Int. conference ``Modern algebra ad its applications'' (Batumi, Sept. 2011), Proceedings of the Int. conference, 1, Journal of Mathematical Sciences September 2013, Vol.~193, Issue 3, 378--381, Special Session dedicated to Professor Gigla Janashia.
\bibitem{BK13}
A.~Belov-Kanel, M.~Kharitonov. {\em Subexponential estimates in the height theorem and estimates on numbers of periodic parts of small periods.} Classical Aspects of Ring Theory and Module Theory, Abstracts (Bedlewo, Poland, July 14--20), Stefan Banach International Mathematical Center, 2013, 58--61.
\bibitem{Kh13}
М.~И.~Харитонов. {\em Оценки на количество перестановочно упорядоченных множеств.} Материалы Международного молодежного научного форума ``Ломоносов-2013'' (Москва, МГУ им. М. В. Ломоносова, 8--13 апреля 2013 г.), Секция ``Математика и механика'', подсекция ``Математическая логика, алгебра и теория чисел'', М.: МАКС Пресс, 2013, 15.
\bibitem{Kh12}
М.~И.~Харитонов. {\em Существенная высота алгебр с полиномиальными тождествами и графы подслов.} Материалы XIX Международной научной конференции студентов, аспирантов и молодых ученых ``Ломоносов'' (Москва, МГУ им. М. В. Ломоносова, 9--13 апреля 2012 г.), Секция ``Математика и механика'', подсекция ``Математическая логика, алгебра и теория чисел'', М.: МАКС, 2012, 18.
\bibitem{Kh11(3)}
М.~И.~Харитонов. {\em Субэкспоненциальные оценки в теореме Ширшова о высоте.} Материалы XVIII Международной научной конференции студентов, аспирантов и молодых ученых ``Ломоносов''. (Москва, МГУ им. М. В. Ломоносова, 11--15 апреля 2011 г.), Секция ``Математика и механика'', подсекция ``Математика'', М.: МАКС, 2011, 176.
\bibitem{LKTG12}
А.~Я.~Белов, М.~И.~Харитонов. {\em Периодичность и порядочность.} 24-я летняя конференция международного математического Турнира городов, Теберда, Карачаево-Черкессия, 03.08.2012--11.08.2012.

\end{enumerate}
\pagestyle{headings}

\end{document}